\documentclass[letterpaper,12pt]{amsart}

\usepackage{amsfonts}% ----------------------------------------------------------------
\usepackage{amssymb}%\hfuzz2pt % Don't report over-full h-boxes if over-edge is small
\usepackage{amsthm}
\usepackage{url}
\usepackage{fullpage}
\usepackage{rotating}
%\usepackage{showkeys}
% THEOREMS -------------------------------------------------------
\newtheorem{theorem}{Theorem}[section]
\newtheorem{definition}[theorem]{Definition}
\newtheorem{corollary}[theorem]{Corollary}
\newtheorem{lemma}[theorem]{Lemma}
\newtheorem{lemma-definition}[theorem]{Lemma-Definition}
\newtheorem{proposition}[theorem]{Proposition}
\newtheorem{remark}[theorem]{Remark}
%\numberwithin{equation}{section}

%\usepackage{longtable}
\usepackage{amsmath}
\usepackage{latexsym}

\def\Q{\mathbb{Q}}
\def\Qp{Q_p}
\def\Z{\mathbb{Z}}

\def\N{\mathbb{N}}
\def\R{\mathbb{R}}
\def\as#1{\renewcommand\arraystretch{#1}}
\def\comb#1#2{\as{0.8}\left(\!\!\begin{array}{c}#1\\#2
\end{array}\!\!\right)\as{1}}

\def\ff#1{\mathbb{F}_{#1}}

\def\g{{\mathcal G}}
\def\ind{\operatorname{ind}}
\def\ks{K^{\operatorname{sep}}}
\def\lra{\longrightarrow}
\def\m{{\mathfrak m}}
\def\md#1{\ \mbox{\rm(mod }{#1})}
\def\oo{{\mathcal O}}
\def\op{\operatorname}
\def\ord{\operatorname{ord}}
\def\p{\mathfrak{p}}

\def\clK{K^{\operatorname{alg}}}
\def\qpb{\clK}

\def\t{\theta}

\def\ty{\mathbf{t}}
\def\vv{{\mathcal V}}
\def\zpx{\oo[x]}
%%%%%%%%%%%%%%%%%%%%%%%%%%%%%%%%%%%%%%%%%%%%%%%%%%%%%%%%%%%%%%%%%%%%%%%%%%%%%%%%%%%%%%%%%%%

\newcommand{\disc}{{\operatorname{disc}}}
\newcommand{\quotrem}{{\operatorname{quotrem}}}
%%%%%%%%%%%%%%%%%%%%%%%%%%%%%%%%%%%%%%%%%%%%%%%%%%%%%%%%%%%%%%%%%%%%%%%%%%%%%%%%%%%%%%%%%%%
\newtheorem{alg}[theorem]{Algorithm}
\newlength{\alginputwidth}
\setlength{\alginputwidth}{\textwidth}
\newlength{\algtmp}
\settowidth{\algtmp}{\rm Output: }
\addtolength{\alginputwidth}{-\algtmp}
\settowidth{\algtmp}{\mbox{\enspace} }
\addtolength{\alginputwidth}{-\algtmp}

\newcommand{\Algo}[5]
            {
            \begin{alg}[#1] \label{#2}{$\;$}\rm
                \\
\mbox{\enspace}
                \rlap{\rm Input: }\phantom{\rm Output: }
\parbox[t]{\alginputwidth}{#3}
                \\
\mbox{\enspace}
                {\rm Output: }
\parbox[t]{\alginputwidth}{#4}
\parskip0pt
\begin{list}{}{\setlength{\leftmargin}{0pt}}
\item                #5
\end{list}
            \end{alg}
            \goodbreak}
%%%%%%%%%%%%%%%%%%%%%%%%%%%%%%%%%%%%%%%%%%%%%%%%%%%%%%%%%%%%%%%%%%%%%%%%%%%%

\title{Single-factor lifting and factorization of polynomials over local fields}
\author[Gu\`ardia]{Jordi Gu\`ardia}
\address{Departament de Matem\`atica Aplicada IV, Escola Polit\`ecnica Superior d'Enginyera de Vilanova i la Geltr\'u, Av. V\'\i ctor Balaguer s/n. E-08800 Vilanova i la Geltr\'u, Catalonia}
\email{guardia@ma4.upc.edu}

\author[Nart]{\hbox{Enric Nart}}
\address{Departament de Matem\`{a}tiques,
         Universitat Aut\`{o}noma de Barcelona,
         Edifici C, E-08193 Bellaterra, Barcelona, Catalonia, Spain}
\email{nart@mat.uab.cat}

\author[Pauli]{\hbox{Sebastian Pauli}}
\address{Department of Mathematics and Statistics, University of North Carolina at Greensboro, Greensboro, NC 27402, USA}
\email{s\_pauli@uncg.edu}

\thanks{Partially supported by MTM2009-13060-C02-02 and MTM2009-10359 from the
Spanish MEC}
\date{}
\keywords{local field, Montes algorithm, Montes approximation, Newton polygon, Okutsu approximation, polynomial factorization}

\makeatletter
\@namedef{subjclassname@2010}{%
\textup{2010} Mathematics Subject Classification}

\subjclass[2010]{Primary 11S15; Secondary 11S05, 11Y40}

\date{}

\begin{document}
\maketitle

\begin{abstract}
Let $f(x)$ be a separable polynomial over a local field. Montes algorithm computes certain approximations to the different irreducible factors of $f(x)$, with strong arithmetic properties. In this paper we develop an algorithm to improve any one of these approximations,
till a prescribed precision is attained. The most natural application of this ``single-factor lifting" routine is to combine
it with Montes algorithm to provide a fast polynomial factorization algorithm. Moreover, the single-factor lifting algorithm may be applied as well
to accelerate the computational resolution of several global arithmetic problems in which the improvement of an approximation to a single local irreducible factor of a polynomial is required.
\end{abstract}

\section{Introduction}
Polynomial factorization over local fields is an important problem with many applications in
computational number theory and algebraic geometry.
The problem of factoring polynomials over local fields is closely related to several other computational problems, namely the computation of integral bases and the decomposition of
ideals.  Indeed, the factorization algorithms \cite{ford-pauli-roblot,pauli01} implemented in {\tt Pari} \cite{pari} and {\tt Magma} \cite{magma}
are based on the Round Four algorithm \cite{ford} which was originally conceived as an integral bases algorithm.
A similar algorithm was developed by Cantor and Gordon \cite{cantor-gordon}.
All algorithms mentioned above suffer from precision loss in the computation of characteristic polynomials,
which are used in the core part of the algorithm as well as in the lifting of the factorization.

%TODO FIX ME: Chistov

In Montes algorithm \cite{HN,Malgorithm}, originally conceived as an ideal decomposition algorithm \cite{montes},
these precision problems do not exist.
It computes what we call \emph{Montes approximations}  (cf. section \ref{secApprox}) to the irreducible factors of a separable polynomial over a local field,
along with other data needed for the computation of integral bases and ideal factorization,
extremely efficiently.
These approximations can be lifted to an arbitrary precision with
further iterations of Montes algorithm \cite[Sec.4.3]{okutsu}, but the convergence of this method is linear and it is slow in practice.
We present in this paper a \emph{single-factor lifting} algorithm, that
lifts a Montes approximation to an irreducible polynomial to any given precision, with quadratic convergence.

%A special case of our algorithm can be used to lift a separable factor of a polynomial over the residue class field to any precision. This algorithm has the same theoretical complexity as the quadratic Hensel lifting algorithm \cite{zassenhaus}, but it has a better performance in practice.

The combination of Montes algorithm and the single-factor lifting algorithm leads to a fast factorization algorithm for polynomials over local fields. For a fixed prime number $p$, this algorithm finds an approximation, with a prescribed precision $\nu\in\N$, to all the irreducible factors of a degree $n$ separable polynomial, $f(x)\in\Z_p[x]$, in $O\left(n^{2+\epsilon}v_p(\disc(f))^{2+\epsilon}+n^2\nu^{1+\epsilon}\right)$ operations with integers less than $p$.

Also, the single-factor lifting algorithm leads to a significant acceleration of the +Ideals package \cite{userguide}. This package contains several routines to deal with fractional ideals in number fields, and it is based on the \emph{Okutsu-Montes representations} of the prime ideals \cite{newapp}. Several of these routines use Montes approximations that need to be improved up to certain precision, and the single-factor lifting brings these routines to an optimal performance.

The outline of the paper is as follows. In section \ref{secTypes} we give an overview of Montes algorithm and the interpretation of its output in terms of Okutsu invariants of the irreducible factors of the input polynomial $f(x)$. Among them, the Okutsu \emph{depth} of each irreducible factor has a strong influence on the computational complexity of $f(x)$. In section \ref{secWidth} we introduce a new Okutsu invariant: the \emph{width} of an irreducible polynomial over a local field. This invariant completes the family of invariants that determine the computational complexity of such an irreducible polynomial: degree, height, index, depth and width. In an Appendix we present families of test polynomials with a controlled variation of all these invariants. We hope that these  polynomials may be useful to test other arithmetic algorithms and detect their strongness and weakness with respect to the variation of each one of these invariants.

In section \ref{secApprox} we discuss how to measure the quality of a Montes approximation, and what arithmetic properties of the irreducible factor we are approximating can be read from a sufficiently good approximation. In section \ref{secImproving} we show that
a Montes approximation can be lifted to an
approximation with arbitrary precision, with quadratic convergence.
In section \ref{secAlgo} we give an algorithm for this lifting procedure and discuss its complexity. Finally, in section \ref{secNumerical}, we present some running times of the factorization algorithm on the families of test polynomials introduced in the Appendix.

%computer algebra systems PARI-GP \cite{pari,ford-pauli-roblot} and Magma \cite{magma,pauli01}.

\subsection*{Notation}
Throughout the paper we fix a local field $K$, that is, a complete field with respect to a discrete valuation $v$. We let $\oo$ be its ring of integers,
$\m$ the maximal ideal of $\oo$, $\pi\in\m$ a generator of $\m$, $\ff{}=\oo/\m$ the residue class field of $K$, which is suposed to be perfect, and
$^{\raise.8ex\hbox to 8pt{\hrulefill }}\colon \zpx\lra \ff{}[x]$ the natural reduction map.
We write $v\colon \qpb\to \Q\cup\{\infty\}$ for the canonical extension of $v$ to an algebraic closure $\qpb$ of $K$, normalized such that $v(\pi)=1$,
and denote by $\ks\subseteq \qpb$ the separable closure of $K$ in $\qpb$.

Given a field $\mathcal{F}$ and two polynomials $\varphi(y),\,\psi(y)\in \mathcal{F}[y]$,
we denote by $s=\ord_\psi \varphi$ the largest exponent $s$ with $\psi(y)^s\mid \varphi(y)$. Also, we write  $\varphi(y)\sim \psi(y)$ to indicate that there exists a constant
$c\in \mathcal{F}^*$ such that $\varphi(y)=c\psi(y)$.

%%%%%%%%%%%%%%%%%%%%%%%%%%%%%%%%%%%%%%%%%%%%%%%%%%%%%%%%%%%%%%%%%%%%%%%%%%%%%%%%%%%%%%%%%%%%%%%%%%%%

\section{Complete types and Okutsu invariants}\label{secTypes}

In this section we give an overview of Montes algorithm \cite{HN,Malgorithm} and the interpretation of its output in terms of Okutsu invariants \cite{okutsu}.  Although most of the results about Montes algorithm
are formulated for separable polynomials over the ring of integers of a $p$-adic field, they can be easily generalized
to separable monic polynomials with integral coefficients over local fields with perfect residue field.
In this paper we work in the general setting.
A variant of Montes algorithm formulated for polynomials over locally compact local fields is given in
\cite{pauli10}.

Let $f(x)\in\zpx$ be a monic separable polynomial.
An application of Montes algorithm determines a family of \emph{$f$-complete and optimal types},
that are in one-to-one corres\-pondence to the irreducible factors of
$f(x)$.

Let $\ty$ be the $f$-complete and optimal
type that corresponds to an irreducible factor $F(x)$ of $f(x)\in\oo[x]$. Let $\t\in\ks$ be a root of $F(x)$ and denote $L=K(\t)$. The type $\ty$ has an \emph{order}, which is  a non-negative integer. If $\ty$ has order $0$, then it corresponds to an irreducible factor (say) $\psi(x)$ of $\overline{f}(x)$ over $\ff{}[x]$, that divides $\overline{f}(x)$ with exponent one; in this case $L$ is the unramified extension of $K$ of degree $\deg\psi$. If $\ty$ has order $r\ge 1$, then $\ty$ is structured into $r$ \emph{levels}. At each level $1\le i\le r$, the type stores a monic separable irreducible polynomial
$\phi_i(x)\in\zpx$ and several invariants, that are linked to combinatorial and
arithmetic properties of Newton polygons of higher order of $f(x)$ and capture many properties of the extension $L/K$.  The polynomials $\phi_1,\dots,\phi_r$ are a sequence of approximations to $F(x)$ with
\[
%\deg\phi_1<\cdots<\deg\phi_r \quad\mbox{and}\quad
v(\phi_1(\t))<\cdots<v(\phi_r(\t)).
\]
In general we measure the quality of an approximation $\phi(x)$ to $F(x)$
by the valuation $v(\phi(\t))$.%, where $\t$ is a root of $F(x)$.

The most important invariants of the type $\ty$ for each level $1\le i\le r$ are:
\medskip
\begin{center}
\begin{tabular}{ll}
$\phi_i(x)\in\zpx$               & a monic irreducible separable polynomial\\
$m_i=\deg \phi_i(x)$             & \\%with $m_{i-1}<m_{i}$ for $i\ge 2$
$\lambda_i=-h_i/e_i$             & where $h_i,e_i$ are positive coprime integers\\
$V_i$               &$=e_{i-1}f_{i-1}(e_{i-1}V_{i-1}+h_{i-1})\in\Z_{\ge0}$\\
%\ell_i,\ell'_i,& \mbox{a pair of integers satisfying }\ell_ih_i-\ell'_ie_i=1,\\
$\psi_i(y)\in\ff{i}[y]$          & a monic irreducible polynomial \\
$f_i=\deg \psi_i(y)$             &\\
$\ff{i+1}=\ff{i}[y]/(\psi_i(y))$ &  \\
$z_i$                            & the class of $y$ in $\ff{i+1}$, so that $\psi_i(z_i)=0$
\end{tabular}
\end{center}
\medskip
In the initial step of Montes algorithm the type stores some invariants of level zero, like the monic irreducible factor $\psi_0(y)$ of $\overline{F}(y)$ in $\ff{}[y]$, which is obtained from a factorization of $\overline{f}(y)$.
We set $$e_0=1,\quad h_0=V_0=0,\quad f_0=\deg \psi_0, \quad\ff0=\ff{}, \quad\ff1=\ff0[y]/(\psi_0(y)),$$ and
denote by $z_0\in\ff{1}$ the class of $y$ in $\ff1$. These initial invariants are computed for all types, including those of order $0$.

By construction, the polynomials $\phi_i(x)$ have degree $m_i=(f_0f_1\cdots f_{i-1})(e_1\cdots e_{i-1})$, so that $m_1\mid\cdots\mid m_r$. Note that the fields $\ff{i}$ form a tower of finite extensions of the residue field:
$$
\ff{}=\ff{0}\subseteq \ff{1}\subseteq \cdots \subseteq \ff{r+1},
$$
with $\ff{i+1}=\ff{i}[z_i]=\ff0[z_0,\dots,z_i]$, and $[\ff{i+1}:\ff{0}]=f_0f_1\cdots f_i$.

In each iteration the invariants of a certain level are determined from the data for the previous levels and $f(x)$. Besides the ``physical" invariants, there are other operators determined by the invariants of each level $1\le i\le r$
of the type $\ty$, which are necessary to compute the invariants of the next level:
\medskip
\begin{center}
\begin{tabular}{ll}
$v_i:K(x)\to\Z\cup\{\infty\}$&\ a discrete valuation of the field $K(x)$\\
$N_i\colon K[x]\to 2^{\R^2}$ &\ a Newton polygon operator\\
$R_i\colon \zpx\to \ff{i}[y]$ &\ a residual polynomial operator
\end{tabular}
\end{center}
\medskip
The discrete valuation $v_1$ is the extension of $v$ to $K(x)$ determined by
$$
v_1\colon K[x]\lra\Z\cup\{\infty\},\quad v_1(b_0+\cdots+b_rx^r):=\min\{v(b_j)\mid 0\le j\le r\}.
$$
There is also a $0$-th residual polynomial operator, defined by
$$
R_0\colon \zpx\lra \ff0[y],\quad g(x)\mapsto \overline{g(y)/\pi^{v_1(g)}}.
$$
The Newton polygon operator $N_i$ is determined by the pair $(\phi_i,v_i)$.
For any non-zero polynomial $g(x)\in K[x]$, with $\phi_i$-adic development
\[
g(x)=\sum\nolimits_{s\ge0}a_s(x)\phi_i(x)^s,\quad \deg a_s<m_i,
\]
the polygon $N_i(g)$ is the lower convex hull of the set of points of the plane with coordinates $(s,v_i(a_s(x)\phi_i(x)^s))$.
The negative rational number $\lambda_i$ is the slope of one side of the Newton polygon $N_i(f)$ and
the polynomial $\psi_i(y)$ is a monic irreducible factor of the residual polynomial $R_i(f)(y)$ in $\ff{i}[y]$.

The triple $(\phi_i,v_i,\lambda_i)$ determines the discrete valuation $v_{i+1}$ as follows: for any  non-zero polynomial $g(x)\in K[x]$,
take a line of slope $\lambda_i$ far below $N_i(g)$ and let it shift upwards till it touches the polygon for the first time;
if $H$ is the ordinate of the point of intersection of this line with the vertical axis, then $v_{i+1}(g)=e_i H$.
The invariants $V_i\in\Z_{\ge0}$ are actually: $V_i=v_i(\phi_i)$.

\begin{definition}\label{defs}\rm
Let $\ty$ be a type of order $r\ge 0$ as above, and let $g(x)\in\oo[x]$ be a non-zero polynomial.
\begin{enumerate}
\item
We say that $\ty$ is \emph{optimal} if $m_1<\cdots<m_r$, or equivalently, $e_if_i>1$, for all $1\le i<r$.
\item
We say that $\ty$ is \emph{strongly optimal} if $e_if_i>1$, for all $1\le i\le r$.
\item We define $\ord_\ty(g):=\ord_{\psi_r}(R_r(g))$.
\item
We say that $\ty$ is \emph{$g$-complete} if $\ord_\ty(g)=1$.
\item We say that $g(x)$ is a \emph{representative} of $\ty$ if it is monic of degree $m_{r+1}:=m_re_rf_r$, and $R_r(g)\sim\psi_r$.
In this case, $g(x)$ is irreducible over $\oo$ \cite[Sec.2.3]{HN}\end{enumerate}
\end{definition}

Once an $f$-complete and optimal type $\ty$ is computed,
the main loop of Montes algorithm is applied once more to construct a representative $\phi_{r+1}(x)$ of $\ty$. This polynomial has degree $m_{r+1}=\deg F$ and it is a \emph{Montes approximation} to $F$ (cf. section \ref{secApprox}). Although we keep thinking that $\ty$ has order $r$, actually it supports an $(r+1)$-level with the invariants:
\[
\phi_{r+1}(x),\ m_{r+1}=\deg \phi_{r+1}=\deg F,\ \lambda_{r+1}=-h_{r+1},\ e_{r+1}=1,
\]
$V_{r+1}=e_rf_r(e_rV_r+h_r)=v_{r+1}(\phi_{r+1})$, the discrete valuation $v_{r+1}$ and the field $\ff{r+1}$, which is a computational representation of the residue field of $L$.

The crutial property of $\ty$ is $f$-completeness. By the theorem of the product \cite[Thm.2.26]{HN}, the function $\ord_\ty$ behaves well with respect to multiplication:
$$\ord_\ty(gh)=\ord_\ty(g)+\ord_\ty(h),$$
for any pair of polynomials $g(x),h(x)\in\oo[x]$. Thus, the property $\ord_\ty(f)=1$ singles out an irreducible factor $F(x)$ of $f(x)$ in $\oo[x]$, uniquely determined by $\ord_\ty(F)=1$ and $\ord_\ty(G)=0$, for any other irreducible factor $G(x)$ of $f(x)$. Note that the type $\ty$ is $F$-complete too.

Given a non-zero polynomial $g(x)\in \zpx$, we are usually interested only in the
\emph{principal part} $N_i^-(g)$ of the Newton polygon $N_i(g)$, that is the polygon
$N_i^-(g)$ consisting of the sides of $N_i(g)$ of negative slope.
The \emph{length} of a Newton polygon is by definition the abscissa of the right end point of the polygon.
In the following proposition we recall some more technical facts from \cite{HN} about the invariants introduced above.

\begin{proposition}\label{previous}Let $g(x)\in\oo[x]$ be a non-zero polynomial.

\begin{enumerate}
\item $N_i(F)$ is one-sided of slope $\lambda_i$, for all $1\le i\le r+1$, and $R_i(F)(y)\sim \psi_i(y)^{a_i}$, for some positive exponent $a_{i}$, for all $0\le i\le r$.
\item $N_i(\phi_{i+1})$ is one-sided of slope $\lambda_i$, for all $1\le i\le r$, and
 $R_i(\phi_{i+1})(y)\sim \psi_i(y)$, for all $0\le i\le r$.
\item $\ord_{\psi_i}R_i(g)$ coincides with the length of $N_{i+1}^-(g)$, for all $0\le i\le r$.
\item $v(g(\t))\ge v_{i}(g)/(e_1\cdots e_{i-1})$, for all $1\le i\le r+1$. If $\deg g<m_i$, then equality holds.
\item $v(\phi_i(\t))=(V_i+|\lambda_i|)/(e_1\cdots e_{i-1})$, for all $1\le i\le r+1$.
\end{enumerate}
\end{proposition}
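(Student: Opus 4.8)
The plan is to obtain these statements by a simultaneous induction on the level $i$, so I shall stress the dependencies and the two mechanisms that power the induction. Those mechanisms are the \emph{theorem of the polygon} and the \emph{theorem of the residual polynomial} of \cite{HN}: for a product $gh$, the principal polygon $N_i^-(gh)$ is the concatenation, by increasing slope, of the sides of $N_i^-(g)$ and of $N_i^-(h)$, and, once a slope $\lambda$ is fixed, the residual polynomial of $gh$ attached to $\lambda$ is $\sim$ the product of those of $g$ and of $h$. Together with the theorem of the product \cite[Thm.2.26]{HN}, which makes $\ord_\ty$ additive and singles out $F$ by $\ord_\ty(F)=1$, these let information be transferred freely among $F$, the intermediate polynomials $\phi_i$, and an arbitrary $g(x)\in\oo[x]$.

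I would prove (4) and (5) together, since each level of one feeds on the previous level of the other. For $i=1$, $v_1$ is the Gauss valuation and $\t$ is integral, so $v(g(\t))\ge\min_j v(b_j)=v_1(g)$ term by term; if in addition $\deg g<m_1=f_0$, one may normalize $v_1(g)=0$ and observe that $\overline{g}(\overline{\t})\ne0$ because $\overline{\t}$ is a root of the irreducible $\psi_0$, of degree $f_0$, whence equality. For the inductive step, expand $g=\sum_s a_s\phi_i^s$ with $\deg a_s<m_i$; applying (4) at level $i$ to each $a_s$ and (5) at level $i$ to $\phi_i$, the summand $a_s(\t)\phi_i(\t)^s$ acquires $v$-value equal to $(e_1\cdots e_{i-1})^{-1}$ times the vertical-axis intercept of the line of slope $\lambda_i$ through the point $(s,v_i(a_s\phi_i^s))$, and minimizing over $s$ gives $v(g(\t))\ge v_{i+1}(g)/(e_1\cdots e_i)$. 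When $\deg g<m_{i+1}$ one checks, via the residual polynomial, that no cancellation occurs among the minimal summands, forcing equality; statement (5) is then the case $g=\phi_{i+1}$, whose $\phi_i$-adic expansion has the shape prescribed by (2), combined with $V_i=v_i(\phi_i)$.

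For (1) and (2), irreducibility is the lever: were $N_i^-(F)$ to have sides of two distinct slopes, the theorem of the polygon would exhibit a proper factorization of $F$, so, $\ty$ being $F$-complete, the only slope present is $\lambda_i$, and the theorem of the residual polynomial then forces $R_i(F)\sim\psi_i^{a_i}$ with $a_i>0$. Since $\phi_{i+1}$ is, by construction, a representative of $\ty$ truncated at level $i$, we have $R_i(\phi_{i+1})\sim\psi_i$ from the definition of representative, and the same one-sidedness argument, now with multiplicity one, gives that $N_i(\phi_{i+1})$ is one-sided of slope $\lambda_i$. Statement (3) is the combinatorial bridge between consecutive levels: the abscissas occurring in $N_{i+1}^-(g)$ index the coefficients of the $\phi_{i+1}$-adic expansion of $g$, and the length of this principal polygon equals the multiplicity with which the distinguished factor $\psi_i$ divides $R_i(g)$, i.e. $\ord_{\psi_i}R_i(g)$ — this is exactly the mechanism by which the algorithm detects that a level has been completed.

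The main obstacle is organizational rather than conceptual: (4) and (5) are mutually entangled and both rest on (3), which in turn needs (1)–(2) so as to know that $\psi_i$ is the irreducible factor attached to $F$, so the implications at each level of the induction must be carefully ordered. The one genuinely delicate point is the equality clause of (4) for $\deg g<m_i$: one must verify that the vertex, or side, of $N_i(g)$ realizing the minimal valuation is not annihilated upon evaluating at $\t$, and this is precisely where the coprimality of $h_i$ and $e_i$ and the integrality of the residual coefficients enter; everything else reduces to bookkeeping with lower convex hulls.
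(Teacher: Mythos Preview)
The paper does not prove this proposition. It is introduced explicitly as ``some more technical facts from \cite{HN}'' recalled without proof; the only comment offered is the one-line remark immediately following the statement, that item (5) is a particular case of the Theorem of the polygon \cite[Thm.~3.1]{HN}. So there is no argument in the paper against which to compare your proposal.

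That said, your sketch is a faithful outline of how these facts are actually established in \cite{HN}, and the inductive architecture you describe --- moving between levels via the multiplicativity of principal polygons and residual polynomials, using irreducibility of $F$ to force one-sidedness, and reading $v(g(\theta))$ off the $\phi_i$-adic expansion --- is the correct mechanism. One terminological wrinkle: what you call the ``theorem of the polygon'' (concatenation of sides for a product) is, in the nomenclature of \cite{HN} as used in this paper, part of the \emph{theorem of the product}; the name \emph{Theorem of the polygon} is reserved for the statement linking $v(g(\theta))$ to the slopes of $N_i(g)$, which is what underlies (4) and (5). Your identification of the equality clause in (4) as the genuinely delicate point is apt: in \cite{HN} this is handled by showing that when $\deg g<m_i$ the residual polynomial attached to the relevant side is nonzero at the distinguished root, so no cancellation occurs. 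With that caveat, your proposal is a sound high-level summary of the \cite{HN} argument, which is all the paper itself invokes.
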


Proposition \ref{previous} (5) is a particular case of the \emph{Theorem of the polygon} \cite[Thm.3.1]{HN}.

There is a natural notion of \emph{truncation} of a type at a certain level. The type $\op{Trunc}_i(\ty)$ is the type of order $i$ obtained by forgetting all levels of order greater than $i$. Note that $\phi_{i+1}(x)$ is a representative of $\op{Trunc}_i(\ty)$, by Proposition \ref{previous} (2).

The \emph{Okutsu depth} of the irreducible polynomial $F(x)$ is the non-negative integer \cite[Thm.4.2]{okutsu}:
$$%\begin{equation}\label{depth}
\op{depth}(F)=R:=\left\{\begin{array}{ll}
r,  &\mbox{ if }m_r<\deg F,\mbox{ or }r=0,\\
r-1,&\mbox{ if }m_r=\deg F,\mbox{ and }r>0.
\end{array}
\right.
$$%\end{equation}
Since, $\deg F/m_r=m_{r+1}/m_r=e_rf_r$, the Okutsu depth of $F$ is equal to $r$ if and only if $e_rf_r>1$; that is, if and only if the type $\ty$ is strongly optimal.
Since $m_1<\cdots<m_{R+1}=\deg F$, we have clearly $R=O(\log(\deg F))$.

The family $[\phi_1,\dots,\phi_R]$ is an \emph{Okutsu frame} of $F(x)$ \cite[Thm.3.9]{okutsu}. This means that for any monic polynomial $g(x)\in\oo[x]$ of degree less than $\deg F$, we have, for all $0\le i\le R$:
\begin{equation}\label{frame}
\dfrac{v(g(\t))}{\deg g}\le\dfrac{v(\phi_i(\t))}{m_i}<\dfrac{v(\phi_{i+1}(\t))}{m_{i+1}} ,\ \mbox{ if }m_i\le \deg g<m_{i+1},
\end{equation}
with the convention that $m_0=1$, $\phi_0(x)=1$.

The numeri\-cal invariants $h_i,\,e_i,\,f_i,\,m_i,\,v(\phi_i(\t))$, for $1\le i\le R$, and the discrete va\-luations
$v_1,\dots,v_{R+1}$ are canonical invariants of $F(x)$ \cite[Cors.3.6+3.7]{okutsu}. They are examples of \emph{Okutsu inva\-riants} of $F(x)$; that is, invariants that can be computed from any Okutsu frame of $F(x)$ \cite[Sec.2]{okutsu}. These invariants
carry on a lot of information about the arithmetic pro\-perties of the extension $L/K$. For instance,
$$%\begin{equation}\label{ef}
e(L/K)=e_1\cdots e_R=e_1\cdots e_r,\quad f(L/K)=f_0f_1\cdots f_R=f_0f_1\cdots f_r,
$$%\end{equation}
and the field $\ff{R+1}=\ff{r+1}$ is a computational representation of the residue field of $L$.

\section{Width of an irreducible polynomial over a local field}\label{secWidth}
Let $F(x)\in\oo[x]$ be a monic irreducible separable polynomial. Let $\t\in \ks$ be a fixed root of $F(x)$, and $L=K(\t)$ the finite separable extension of $K$ determined by $\t$.

In this section we introduce a new Okutsu invariant of an irreducible polynomial over a local field: its \emph{width}. The depth and width of $F(x)$ have a strong influence on the computational complexity of the field $L$, represented as the field extension of $K$ generated by a root of $F(x)$. The relevance of these invariants in a complexity analysis is analogous to that of other parameters more commonly used to measure the complexity of $F$, like the degree, the height (maximal size of the coefficients) and the $v$-value of the discriminant of $F$.

Let $[\phi_1,\dots,\phi_R]$ be an Okutsu frame of $F(x)$. By \cite[Thm.3.5]{okutsu}, there exists an $F$-complete strongly optimal type $\ty_F$ of order $R$, having $\phi_1,\dots,\phi_R$ as its $\phi$-polynomials. Many of the data supported by $\ty_F$ are canonical (Okutsu) invariants of $F$, but the type $\ty_F$ itself is not an intrinsic invariant of $F$.

\begin{lemma}\label{Mrepr}
Let $\ty_F$ be an $F$-complete strongly optimal type of order $R$, and let $\phi_1,\dots,\phi_R$ be its family of $\phi$-polynomials. Let $\phi_{R+1}$ be a representative of $\ty_F$, and take $\phi_0(x):=1$, $m_0:=1$.
For any $1\le i\le R+1$ and any monic polynomial $g(x)\in\oo[x]$ of degree $m_i$, the following conditions are equivalent:
\begin{enumerate}
\item[(a)] $R_{i-1}(g)\sim\psi_{i-1}$.
\item[(b)] $v(g(\t))/m_i>v(\phi_{i-1}(\t))/m_{i-1}$.
\item[(c)] $v(g(\t))>V_i/(e_1\cdots e_{i-1})$.
\end{enumerate}
\end{lemma}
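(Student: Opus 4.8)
The plan is to establish the chain of equivalences (a)$\,\Longleftrightarrow\,v_i(g-\phi_i)>V_i\,\Longleftrightarrow\,$(c), together with a purely numerical equivalence (b)$\,\Longleftrightarrow\,$(c). Write $\delta:=g-\phi_i$; as $g$ and $\phi_i$ are both monic of degree $m_i$, we have $\deg\delta<m_i$. If $\delta=0$, then (a) holds by Proposition \ref{previous}(2), while (b) and (c) hold because Proposition \ref{previous}(5) gives $v(\phi_i(\t))=(V_i+|\lambda_i|)/(e_1\cdots e_{i-1})>V_i/(e_1\cdots e_{i-1})$, since $\lambda_i\neq 0$; so I assume $\delta\neq 0$ from now on. For (b)$\,\Longleftrightarrow\,$(c) I would combine Proposition \ref{previous}(5) at level $i-1$ with the identities $m_i/m_{i-1}=e_{i-1}f_{i-1}$, $|\lambda_{i-1}|=h_{i-1}/e_{i-1}$ and $V_i=e_{i-1}f_{i-1}(e_{i-1}V_{i-1}+h_{i-1})$; a one-line computation then yields
\[
m_i\,\frac{v(\phi_{i-1}(\t))}{m_{i-1}}=\frac{V_i}{e_1\cdots e_{i-1}}
\]
(both sides being $0$ when $i=1$, where $\phi_0=1$, $V_1=0$), and dividing the inequality in (b) by $m_i$ turns it into exactly (c).

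For (a)$\,\Longleftrightarrow\,v_i(\delta)>V_i$: since $\deg g=m_i=\deg\phi_i$ and both are monic, the $\phi_i$-adic development of $g$ is $g=\phi_i+\delta$, so $N_i(g)$ is the segment from $(0,v_i(\delta))$ to $(1,V_i)$, whose principal part $N_i^-(g)$ has length $1$ if $v_i(\delta)>V_i$ and length $0$ otherwise. By Proposition \ref{previous}(3) at level $i-1$ this length equals $\ord_{\psi_{i-1}}R_{i-1}(g)$. The auxiliary fact I need is the bound $\deg R_{i-1}(g)\le f_{i-1}$: the $\phi_{i-1}$-adic development of the monic polynomial $g$ of degree $m_i=e_{i-1}f_{i-1}m_{i-1}$ has leading term $\phi_{i-1}^{\,e_{i-1}f_{i-1}}$, so $N_{i-1}(g)$ has length $e_{i-1}f_{i-1}$ and the residual polynomial of its $\lambda_{i-1}$-side has degree at most $e_{i-1}f_{i-1}/e_{i-1}=f_{i-1}$ (for $i=1$ this simply reads $\deg R_0(g)=\deg\overline g\le f_0$). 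Hence, if $v_i(\delta)>V_i$ then $\psi_{i-1}\mid R_{i-1}(g)$, so $\deg R_{i-1}(g)\ge\deg\psi_{i-1}=f_{i-1}$, and with the bound this forces $R_{i-1}(g)\sim\psi_{i-1}$, i.e. (a); whereas if $v_i(\delta)\le V_i$ then $\ord_{\psi_{i-1}}R_{i-1}(g)=0$, so $\psi_{i-1}\nmid R_{i-1}(g)$ and (a) fails.

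Finally, for $v_i(\delta)>V_i\,\Longleftrightarrow\,$(c): Proposition \ref{previous}(5) at level $i$ gives $v(\phi_i(\t))=(V_i+|\lambda_i|)/(e_1\cdots e_{i-1})>V_i/(e_1\cdots e_{i-1})$, and Proposition \ref{previous}(4) at level $i$, legitimate since $\deg\delta<m_i$, gives $v(\delta(\t))=v_i(\delta)/(e_1\cdots e_{i-1})$. If $v_i(\delta)>V_i$, both terms of $g(\t)=\phi_i(\t)+\delta(\t)$ have valuation strictly above $V_i/(e_1\cdots e_{i-1})$, so (c) holds. If $v_i(\delta)\le V_i$, then $v(\delta(\t))\le V_i/(e_1\cdots e_{i-1})<v(\phi_i(\t))$, whence by the ultrametric inequality $v(g(\t))=v(\delta(\t))\le V_i/(e_1\cdots e_{i-1})$ and (c) fails. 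This closes the chain and proves the lemma. I expect the delicate point to be the bound $\deg R_{i-1}(g)\le f_{i-1}$, which is exactly what promotes the divisibility $\psi_{i-1}\mid R_{i-1}(g)$ to the equivalence $R_{i-1}(g)\sim\psi_{i-1}$; everything else is bookkeeping with Proposition \ref{previous} and with the degenerate cases $\delta=0$, $i=1$ and $i=R+1$ (where $\phi_{R+1}$ and the level-$(R+1)$ data furnished by the representative play the roles of $\phi_i$ and the level-$i$ data).
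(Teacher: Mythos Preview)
Your argument is correct and, in fact, more self-contained than the paper's. Both proofs share the numerical computation showing (b)$\Leftrightarrow$(c) and both pass through the intermediate condition $v_i(g-\phi_i)>V_i$. The difference lies in how the link with (a) is made. The paper argues a cycle: it cites \cite[Lem.~3.4]{okutsu} for (a)$\Rightarrow$(b), and for (c)$\Rightarrow$(a) it first deduces $v_i(g-\phi_i)>V_i$ (exactly as you do) and then invokes \cite[Prop.~2.8(1)]{HN} to conclude $R_{i-1}(g)=R_{i-1}(\phi_i)\sim\psi_{i-1}$. You instead prove the full equivalence (a)$\Leftrightarrow v_i(\delta)>V_i$ directly, by reading off the length of $N_i^-(g)$ from the two-term $\phi_i$-development and applying Proposition~\ref{previous}(3). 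This avoids both external citations and yields the extra equivalent condition $v_i(g-\phi_i)>V_i$ as a by-product.

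The one place your argument needs something not already stated in Proposition~\ref{previous} is the degree bound $\deg R_{i-1}(g)\le f_{i-1}$, which promotes $\psi_{i-1}\mid R_{i-1}(g)$ to $R_{i-1}(g)\sim\psi_{i-1}$. Your justification (the $\phi_{i-1}$-development of a monic degree-$m_i$ polynomial has length $e_{i-1}f_{i-1}$, hence the $\lambda_{i-1}$-component contributes a residual polynomial of degree at most $f_{i-1}$) is correct and follows from the very definition of $R_{i-1}$ in \cite[Sec.~2]{HN}; the paper sidesteps this by quoting \cite[Prop.~2.8(1)]{HN}, which gives the stronger statement $R_{i-1}(g)=R_{i-1}(\phi_i)$ once $v_i(g-\phi_i)>V_i$. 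Either route works; yours keeps the argument internal to the tools already recalled in Section~\ref{secTypes}.
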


\begin{proof}
Condition (a) says that $g(x)$ is a representative of the truncated type $\op{Trunc}_{i-1}(\ty_F)$. The fact that a representative of a type satisfies (b) was proven in \cite[Lem.3.4]{okutsu}.

Let us write $e:=e_1\cdots e_{i-1}$ for simplicity. Conditions (b) and (c) are equivalent because
$$
\dfrac{V_i}{e}=\dfrac{e_{i-1}f_{i-1}(e_{i-1}V_{i-1}+h_{i-1})}{e}=e_{i-1}f_{i-1}\,\dfrac{V_{i-1}+|\lambda_{i-1}|}{e_1\cdots e_{i-2}}=\dfrac{m_i}{m_{i-1}}v(\phi_{i-1}(\t)),
$$
the last equality by the Theorem of the polygon (Proposition \ref{previous} (5)).

Suppose now that (c) is satisfied.
Since $g$ and $\phi_i$ are both monic of degree $m_i$, the polynomial  $a:=g-\phi_i$ has degree less than $m_i$. By (\ref{frame}), $v(a(\t))\ge \min\{v(g(\t)),v(\phi_i(\t))\}=v(g(\t))>V_i/e$. By Proposition \ref{previous} (4),  $v_i(a)/e=v(a(\t))> V_i/e$; thus, $v_i(a)>V_i$ and $v_i(g)=V_i$. By  \cite[Prop.2.8,(1)]{HN}, $R_{i-1}(g)=R_{i-1}(\phi_i)$, and this implies  $R_{i-1}(g)\sim \psi_{i-1}$, by Proposition \ref{previous} (2).
\end{proof}

\begin{definition}\label{setsG}\rm
For each $1\le i\le R+1$, let $\g_i\subseteq \oo[x]$ be the set of all monic polynomials of degree $m_i$ satisfying any of the conditions of Lemma \ref{Mrepr}. As mentioned along the proof of the lemma, the polynomials in $\g_i$ are the representatives of the truncated type $\op{Trunc}_{i-1}(\ty_F)$; thus, they are all irreducible over $\oo$. In particular, $\g_{R+1}$ is the set of representatives of $\ty_F$.
\end{definition}

Actually, $m_i$ is the minimal degree of a polynomial satisfying condition (a) \cite[Sec.2.3]{HN}.  For $i\le R$, (\ref{frame}) shows that the value $v(\phi_i(\t))$ is maximal among all polynomials in $\g_i$:
$$%\begin{equation}\label{bounded}
v(\phi_i(\t))\ge v(g(\t)),\ \forall g\in\g_i, \  1\le i\le R.
$$%\end{equation}

Since the rational numbers $v(\phi_1(\t)),\dots v(\phi_R(\t))$ are Okutsu invariants of $F(x)$, the sets of polynomials $\g_1,\dots,\g_{R+1}$, and their sets of values
$$\vv_i:=\left\{v(g(\t))\mid g\in \g_i\right\}\subseteq \Q\cup \infty,\quad 1\le i\le R+1,
$$ are intrinsic invariants of $F(x)$ too. The sets $\vv_1,\dots,\vv_R$ are finite, because they are bounded (by (\ref{frame})) and $v$ is a discrete valuation. However, $\vv_{R+1}$ is an infinite set that contains $\infty$, because $F$ clearly belongs to $\g_{R+1}$.

\begin{definition}\label{width}The \emph{width} of $F(x)$ is the vector of non-negative integers:
$$
\op{width}(F):=(\#\vv_1,\dots,\#\vv_R).
$$
\end{definition}

Our next aim is to show that $\op{width}(F)$ is an Okutsu invariant of $F$ and to compute it in terms of the Okutsu frame $[\phi_1,\dots,\phi_R]$.

\begin{proposition}\label{compwidth}
With the above notation, for each $1\le i\le R$ we have:
$$
\#\vv_i=\lceil|\lambda_i|\rceil=\lceil h_i/e_i\rceil.
$$
\end{proposition}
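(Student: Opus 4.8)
The plan is to compute $\#\vv_i$ by understanding which valuations $v(g(\t))$ occur as $g$ ranges over $\g_i$, and for this I would parametrize the polynomials in $\g_i$ by their $\phi_i$-adic perturbations. Fix $1\le i\le R$. A polynomial $g\in\g_i$ is monic of degree $m_i$ satisfying condition (c) of Lemma \ref{Mrepr}, so writing $g=\phi_i+a$ with $\deg a<m_i$ we have $v_i(a)>V_i$ by the argument in the proof of Lemma \ref{Mrepr}; conversely any such perturbation gives an element of $\g_i$. Thus $v(g(\t))=v(\phi_i(\t)+a(\t))$, and since by Proposition \ref{previous} (5) we have $v(\phi_i(\t))=(V_i+|\lambda_i|)/(e_1\cdots e_{i-1})$ while $v(a(\t))=v_i(a)/(e_1\cdots e_{i-1})$ can be any value of the form $t/(e_1\cdots e_{i-1})$ with $t\in\Z$, $t>V_i$ (the integer $t$ being realizable as $v_i$ of a polynomial of degree $<m_i$ — here one uses that $v_i$ takes all sufficiently large integer values on $K[x]_{<m_i}$, cf.\ the structure of $v_i$ recalled before Definition \ref{defs}). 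Write $e=e_1\cdots e_{i-1}$ and $\mu=v(\phi_i(\t))=(V_i+h_i/e_i)/e$.

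Next I would analyze the valuation of the sum. If $v(a(\t))>v(\phi_i(\t))$, i.e.\ $t/e>\mu$, then $v(g(\t))=\mu$. If $v(a(\t))<v(\phi_i(\t))$, then $v(g(\t))=v(a(\t))=t/e$, and as $t$ ranges over the integers with $V_i<t<eV_i+h_i/e_i$, these give $\lceil h_i/e_i\rceil-1$ distinct values $t/e$ strictly between $V_i/e$ and $\mu$ (the integers $t$ in the open interval $(V_i,\,V_i+h_i/e_i)$ — note $h_i/e_i$ need not be an integer, so the count is $\lceil h_i/e_i\rceil-1$). The remaining case $v(a(\t))=v(\phi_i(\t))$ can only occur if $\mu e=V_i+h_i/e_i$ is an integer, i.e.\ $e_i\mid h_i$, i.e.\ $e_i=1$ since $\gcd(h_i,e_i)=1$; in that borderline case the sum $\phi_i(\t)+a(\t)$ could have valuation $>\mu$, but such a value cannot exceed what is already recorded — more carefully, one must check that no value strictly larger than $\mu$ arises. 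This is where I expect the main subtlety: ruling out, or correctly accounting for, cancellation that pushes $v(g(\t))$ above $v(\phi_i(\t))$. The key point is that $\phi_i$ is \emph{not} a representative of $\ty_F$ at level $i$ (it is only a representative of $\op{Trunc}_{i-1}(\ty_F)$), so $v(\phi_i(\t))$ is genuinely the maximum of $\vv_i$ by the displayed inequality just before Definition \ref{width}; hence no $g\in\g_i$ has $v(g(\t))>v(\phi_i(\t))$, and the value $\mu$ itself \emph{is} attained (by $g=\phi_i$). So cancellation producing something larger than $\mu$ simply does not happen.

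Putting the pieces together: $\vv_i$ consists of the top value $\mu=v(\phi_i(\t))$ together with the values $t/e$ for integers $t\in(V_i,V_i+h_i/e_i)$, giving
$$
\#\vv_i=1+\#\{t\in\Z\mid V_i<t<V_i+h_i/e_i\}=1+\bigl(\lceil h_i/e_i\rceil-1\bigr)=\lceil h_i/e_i\rceil=\lceil|\lambda_i|\rceil.
$$
The only thing to be careful about in the counting is whether $V_i+h_i/e_i$ is itself an integer (equivalently $e_i=1$): if it is, then the integer $t=V_i+h_i/e_i$ corresponds to the value $\mu$, already counted, so it must be excluded from the "strictly between" tally, and indeed the strict inequality $t<V_i+h_i/e_i$ does exactly that while $\lceil h_i/e_i\rceil=h_i/e_i$ still gives the right total. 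If $e_i>1$ then $V_i+h_i/e_i\notin\Z$, the count of integers in the open interval is $\lceil h_i/e_i\rceil-1$, and adding the top value again yields $\lceil h_i/e_i\rceil$. In all cases $\#\vv_i=\lceil h_i/e_i\rceil=\lceil|\lambda_i|\rceil$, as claimed; and since $h_i,e_i,v(\phi_i(\t))$ are Okutsu invariants, so is this count, establishing that $\op{width}(F)$ is an Okutsu invariant.
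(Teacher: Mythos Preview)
Your argument is correct and takes a somewhat different route from the paper for the upper bound $\#\vv_i\le\lceil h_i/e_i\rceil$. The paper expresses $v(g(\t))=(V_i+|\lambda|)/e$ via the Theorem of the polygon, where $\lambda$ is the slope of $N_{g,v_i}(F)$, and then invokes an external result \cite[Thm.3.1]{Malgorithm} on optimal representatives to conclude $|\lambda|\le|\lambda_i|$ and $|\lambda|<|\lambda_i|\Rightarrow\lambda\in\Z$. You instead write $g=\phi_i+a$ and analyze $v(\phi_i(\t)+a(\t))$ directly via the ultrametric, using the maximality of $v(\phi_i(\t))$ already recorded just before Definition~\ref{width} (a consequence of the Okutsu-frame inequality (\ref{frame})) to rule out values exceeding $\mu$. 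Your route is more elementary and entirely internal to the paper; the paper's route is more structural, phrased in the language of higher Newton polygons. For the lower bound both proofs do the same thing: realize each integer $t$ with $V_i<t<V_i+h_i/e_i$ as $v_i(a)$ for some $a\in\oo[x]$ of degree $<m_i$. Your reference for this realizability is a bit loose; the precise statement is \cite[Prop.2.10]{HN}, which guarantees such an $a\in\oo[x]$ whenever $t\ge e_{i-1}f_{i-1}v_i(\phi_{i-1})=V_i$ (the last equality by \cite[Thm.2.11]{HN}). Finally, note the typo ``$V_i<t<eV_i+h_i/e_i$'': the upper bound should read $V_i+h_i/e_i$, as you correctly write two lines later.
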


\begin{proof} Let us denote $e:=e_1\cdots e_{i-1}$ for simplicity.

Any $g(x)\in\g_i$ is a representative of the type $\op{Trunc}_{i-1}(\ty)$, and we saw along the proof of Lemma \ref{Mrepr} that
$v_i(g)=V_i$ is constant.
The Theorem of the polygon \cite[Thm.3.1]{HN}, applied to both polynomials, shows that
\begin{equation}\label{compare}
\begin{array}{l}
v(g(\t))=(v_i(g)+|\lambda|)/e=(V_i+|\lambda|)/e,\\
v(\phi_i(\t))=(V_i+|\lambda_i|)/e.
\end{array}
\end{equation}
where $\lambda$ is the slope of the one-sided Newton polygon of $i$-th order $N_{g,v_i}(F)$, computed with respect to $g(x)$ and $v_i$.

By \cite[Thm.3.1]{Malgorithm}, the property $e_if_i>1$ implies that
$\phi_i\in\g_i$ is an optimal representative of  $\op{Trunc}_{i-1}(\ty_F)$; more precisely, this theorem shows that
\begin{equation}\label{bordeaux}
%\begin{array}{l}
 |\lambda|\le |\lambda_i|, \ \mbox{ and }\
 |\lambda|<|\lambda_i|\,\Longrightarrow\,\lambda\in\Z.
%\end{array}
\end{equation}
Hence, (\ref{compare}) and (\ref{bordeaux}) prove that $\#\vv_i\le\lceil|\lambda_i|\rceil$.

In order to prove the opposite inequality, let us show that for any given integer $0< d<|\lambda_i|$, there is a monic polynomial $g\in\oo[x]$ of degree $m_i$ such that  $v(g(\t))=(V_i+d)/e$. Note that such a polynomial belongs to $\g_i$ because it satisfies (c) of Lemma \ref{Mrepr}.
The idea is to spoil the optimal polynomial $\phi_i\in\g_i$, by adding an adequate term: $g(x)=\phi_i(x)+a(x)$, leading to the desired value of $v(g(\t))$. It is sufficient to take $a(x)\in\oo[x]$ satisfying
\begin{equation}\label{via}
\deg a<m_i,\quad v_i(a)=V_i+d.
\end{equation}
In fact, by Proposition \ref{previous} (4), $v(a(\t))=v_i(a)/e=(V_i+d)/e<v(\phi_i(\t))$, so that $g(x)=\phi_i(x)+a(x)$ is monic of  degree $m_i$ and has value: $v(g(\t))=v(a(\t))$.

The existence of $a(x)\in\oo[x]$ satisfying (\ref{via}) is guaranteed by
\cite[Prop.2.10]{HN}, as long as $V_i+d\ge e_{i-1}f_{i-1}v_i(\phi_{i-1})$. By \cite[Thm.2.11]{HN}, we have $e_{i-1}f_{i-1}v_i(\phi_{i-1})=V_i$, so that the desired inequality is obvious.
\end{proof}

The depth of $F$ is linked to the degree: $R=O(\log(\deg F))$, but is is a finer invariant. It is easy to construct irreducible polynomials having the same (large) degree, analogous height and the same $v$-value of the discriminant, but prescribed different depths, from $R=1$ to $R=\lfloor \log_2(\deg F)\rfloor$. A sensible-to-depth algorithm solving some arithmetic task concerning these polynomials will be much faster  for the polynomials with small depth.

In the same vein, the width of $F$ is linked to $v(\disc(F))$, but it is a finer invariant. More precisely, the width is directly linked to the index $\ind(F)$, which is defined as the length of $\oo_L/\oo[\t]$ as an $\oo$-module, and it satisfies: $v(\disc(F))=v(\disc(L))+2\ind(F)$ . The following formula for the index shows the connection between index and width.

\begin{proposition}\label{indextheorem}
$$
\ind(F)=\dfrac{\deg F}2\sum_{1\le i\le R}\dfrac1{e_1\cdots e_{i-1}}\left(|\lambda_i|\left(\frac{\deg F}{m_i}-1\right)-\frac{e_i-1}{e_i}\right).
$$
\end{proposition}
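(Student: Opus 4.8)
One could compute $\ind(F)$ from the relation $v(\disc(F))=v(\disc(L))+2\ind(F)$ stated above by evaluating the two discriminant valuations separately, but it is cleaner to go directly through the Ore--Montes \emph{Theorem of the index} \cite{HN}. Applied to the $F$-complete type $\ty_F$, that theorem expresses $\ind(F)$ level by level as
\[
\ind(F)=\sum_{i=1}^{R}f_0f_1\cdots f_{i-1}\,\ind_i(F),
\]
where $\ind_i(F)$ is the number of points of integer coordinates lying strictly to the right of the vertical axis, strictly above the horizontal axis, and on or below the principal Newton polygon $N_i^-(F)$ of $i$-th order. The right-hand side depends only on $\ty_F$, so it is indeed an Okutsu invariant; the remaining work is to evaluate $\ind_i(F)$ in closed form and to match it, level by level, with the claimed formula.

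First I would pin down the shape of $N_i^-(F)$ for the type $\ty_F$. By Proposition \ref{previous}(1) it is a single segment of slope $\lambda_i=-h_i/e_i$, and by Proposition \ref{previous}(3) and (1) its horizontal length equals $\ord_{\psi_{i-1}}R_{i-1}(F)=a_{i-1}$, where $R_{i-1}(F)\sim\psi_{i-1}^{a_{i-1}}$. Comparing the two expressions $a_{i-1}/e_i$ (length of the side divided by the denominator of its slope) and $f_ia_i$ (since $R_i(F)\sim\psi_i^{a_i}$ has degree $f_ia_i$) for $\deg R_i(F)$ yields the recursion $a_{i-1}=e_if_ia_i$, while $F$-completeness of $\ty_F$ gives the base case $a_R=\ord_\ty(F)=1$; hence $a_{i-1}=e_if_i\cdots e_Rf_R=\deg F/m_i=:n_i$, which is a multiple of $e_i$. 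Thus $N_i^-(F)$ is the segment joining $(0,h_in_i/e_i)$ and $(n_i,0)$, so that
\[
\ind_i(F)=\sum_{x=1}^{n_i-1}\left\lfloor\frac{h_i(n_i-x)}{e_i}\right\rfloor=\sum_{k=1}^{n_i-1}\left\lfloor\frac{h_ik}{e_i}\right\rfloor .
\]

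Evaluating this last sum is routine: writing $\lfloor h_ik/e_i\rfloor=h_ik/e_i-(h_ik\bmod e_i)/e_i$ and using $e_i\mid n_i$ together with $\gcd(h_i,e_i)=1$ --- so that over each of the $n_i/e_i$ blocks of $e_i$ consecutive integers filling $\{0,1,\dots,n_i-1\}$ the residues $h_ik\bmod e_i$ run exactly through $\{0,\dots,e_i-1\}$ --- one obtains $\sum_{k=1}^{n_i-1}(h_ik\bmod e_i)=n_i(e_i-1)/2$, and therefore $\ind_i(F)=\frac{n_i}{2}\left(|\lambda_i|(n_i-1)-\frac{e_i-1}{e_i}\right)$. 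Since $m_i=(f_0\cdots f_{i-1})(e_1\cdots e_{i-1})$ we have $f_0\cdots f_{i-1}\,n_i=\deg F/(e_1\cdots e_{i-1})$, so multiplying $\ind_i(F)$ by the weight $f_0\cdots f_{i-1}$ gives precisely
\[
\frac{\deg F}{2\,e_1\cdots e_{i-1}}\left(|\lambda_i|\left(\frac{\deg F}{m_i}-1\right)-\frac{e_i-1}{e_i}\right),
\]
and summing over $1\le i\le R$ yields the asserted formula. The parts that need care are all in the set-up: fixing the precise normalisation of the Theorem of the index (so that the level-$i$ weight is exactly $f_0\cdots f_{i-1}$ and the count is taken on or below the polygon, excluding both coordinate axes), and verifying that for $\ty_F$ each $N_i^-(F)$ really is the one-sided segment of horizontal length $\deg F/m_i$. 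Once those are settled, the rest is the classical sum-of-floors evaluation above.
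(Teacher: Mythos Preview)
Your proof is correct and follows the same route as the paper: invoke the Theorem of the index \cite[Thm.~4.18]{HN} to write $\ind(F)$ as a sum of level-by-level contributions, identify each $N_i^-(F)$ as a single side of slope $\lambda_i$ and horizontal length $\deg F/m_i$, and evaluate the resulting side-index. The paper simply quotes the closed form for the index of a side from \cite[Def.~4.12]{HN}, whereas you rederive it by the floor-sum computation; the two agree, since $|\lambda_i|E^2-|\lambda_i|E-E+E/e_i = E\bigl(|\lambda_i|(E-1)-(e_i-1)/e_i\bigr)$.

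One small imprecision worth cleaning up: for $i\ge 2$ the polygon $N_i^-(F)$ does \emph{not} literally join $(0,h_in_i/e_i)$ and $(n_i,0)$; its right endpoint sits at height $n_iV_i>0$. The ``index of the side'' in \cite{HN} is defined relative to the horizontal line through the right endpoint (equivalently, it is invariant under integer vertical translation), which is why your count is still correct. Your closing caveat about fixing the normalisation covers this, but the sentence asserting those specific endpoints should be phrased as ``after translating so that the right endpoint lies on the horizontal axis'' to be accurate.
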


\begin{proof}We keep the above notation for $\ty_F$ and $[\phi_1,\dots,\phi_R]$. The Newton polygons $N_i(F)$, for $1\le i\le R$, are all one-sided of slope $\lambda_i$. The length of the projections of $N_i(F)$ to the horizontal and vertical axis are $E:=\deg F/m_i$ and $H:=|\lambda_i|E$, respectively. By the Theorem of the index \cite[Thm.4.18]{HN}, $\ind(F)=\ind_1(F)+\cdots+\ind_R(F)$, where $\ind_i(F)$ is $f_0\cdots f_{i-1}$ times the index of the side $N_i(F)$; that is \cite[Def.4.12]{HN}:
$$
\ind_i(F)=\dfrac{f_0\cdots f_{i-1}}2\left(|\lambda_i|E^2-|\lambda_i|E-E+\dfrac{E}{e_i}\right).
$$
Since $E=(e_if_i)\cdots (e_Rf_R)$, clearly $f_0\cdots f_{i-1}E=\deg F/(e_1\cdots e_{i-1})$, and $\ind_i(F)$ coincides with the $i$-th term of the sum in the statement of the proposition.
\end{proof}

By using the techniques of \cite[Sec.2.3]{HN}, it is easy to construct irreducible polynomials of fixed depth $R$, and
prescribed values of all invariants $e_1,\dots,e_R$, $f_0,\dots,f_R$, $h_1,\dots,h_R$. Since the degree depends only on the $e_i$ and $f_i$ invariants, whereas the slopes $\lambda_i$ depend on $e_i$ and $h_i$, we may construct polynomials with the same degree, depth and index, but different width. Again, sensible-to-width algorithms solving arithmetic tasks concerning these polynomials will be much faster for the polynomials with small width.

Unfortunately, it is difficult to take into account these invariants in theoretical analysis of complexity. For instance, we have not been able to do this in the analysis of the single-factor lifting algorithm in section \ref{secAlgo}. Thus, we thought it might be interesting to test numerically the sensibility of the algorithm to as many complexity parameters as possible, including the depth and width of the irreducible factors of the input polynomial. To this end, in an appendix we present families of test polynomials that, besides the classical parameters, present a controlled variation of the number of irreducible factors and the depth and width of each factor. In section \ref{secNumerical}, we present running times of the factorization of some of these test polynomials, obtained by applying Montes algorithm followed by the single-factor lifting algorithm for each of the irreducible factors. The numerical data suggest that this factorization algorithm is sensible to both invariants, depth and width.

%%%%%%%%%%%%%%%%%%%%%%%%%%%%%%%%%%%%%%%%%%%%%%%%%%%%%%%%%%%%%%%%%%%%%%%%%%%%%%%%%%%%%%%%%%%%%%%%%%%%%%%%%

\section{Montes approximations}\label{secApprox}
We go back to the situation of section \ref{secTypes}. We take an $f$-complete optimal type $\ty$ of order $r$, that singles out a (never computed) monic irreducible factor $F(x)\in\oo[x]$ of the monic separable polynomial $f(x)\in\oo[x]$. Let $\t\in \ks$ be a fixed root of $F(x)$, $L=K(\t)$ the finite separable extension of $K$ determined by $\t$, and $\oo_L$ the ring of integers of $L$. Let $R$ be the Okutsu depth of $F$, and consider the family of canonical sets, $\g_1,\dots,\g_{R+1}$, introduced in Definition \ref{setsG}.

In this section we deal with approximations to $F$. We discuss how to measure the quality of the approximations and the arithmetic properties of $L/K$ that can be derived from any sufficiently good approximation.

\begin{definition}\label{Mapprox}\rm
The polynomials in the set $\g_{R+1}$ are called \emph{Okutsu approximations} to $F(x)$.
\cite[Sec.4]{okutsu}.

The representatives of the type $\ty$ are called \emph{Montes approximations} to $F(x)$.
\end{definition}

The concept of Okutsu approximation to $F(x)$ is intrinsic
(depends only on $F(x)$), and ``being an Okutsu approximation to'' is an equivalence relation on the set of irreducible polynomials in $\oo[x]$ \cite[Lem.4.3]{okutsu}.

However, a Montes approximation is an object attached to $F(x)$ \emph{as a factor of $f(x)$}. Hence, it depends on $f(x)$ and it has no sense to interpret it as a binary relation between irreducible polynomials.

\begin {remark}\rm
Suppose a factorization algorithm is designed in such a way that approximations $\phi$ to a certain irreducible factor $F$ of $f(x)$ are constructed, and the iteration steps consist of finding, for a given $\phi$, a better approximation $\Phi$ satisfying $v(\phi(\t))<v(\Phi(\t))$. Then, by their very definition, the depth and width of $F$ measure the obstruction that the algorithm encounters to reach an Okutsu approximation (for the first time). More precisely, the sum of the components of the width are an upper bound for the number of iterations. Also, the fact that the width is graduated by the depth makes sense because it is highly probable that the iterations at a higher depth will have a higher cost.
\end {remark}

\begin{lemma}
A Montes approximation is always an Okutsu approximation. The converse holds if and only if $R=r$.
\end{lemma}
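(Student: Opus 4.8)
The plan is to unwind the relevant definitions and use the characterizations of membership in the canonical sets already established. Recall that a Montes approximation to $F$ is, by definition, a representative of the $f$-complete optimal type $\ty$ of order $r$; that is, a monic polynomial of degree $m_{r+1}=m_re_rf_r$ with $R_r(\,\cdot\,)\sim\psi_r$. On the other hand, an Okutsu approximation to $F$ is an element of $\g_{R+1}$, the set of representatives of the $F$-complete strongly optimal type $\ty_F$ of order $R$, which by Definition \ref{setsG} and Lemma \ref{Mrepr} consists of the monic polynomials $g$ of degree $m_{R+1}=\deg F$ with $v(g(\t))>V_{R+1}/(e_1\cdots e_R)$, equivalently $v(g(\t))/\deg F>v(\phi_R(\t))/m_R$.

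First I would prove that a Montes approximation is always an Okutsu approximation. Let $\phi_{r+1}$ be a representative of $\ty$. Its degree is $m_{r+1}=\deg F$, since $\ty$ is $f$-complete and $F$ is the corresponding irreducible factor. By Proposition \ref{previous} (5), $v(\phi_{r+1}(\t))=(V_{r+1}+|\lambda_{r+1}|)/(e_1\cdots e_r)>V_{r+1}/(e_1\cdots e_r)$, and since $e_{r+1}=1$ this is exactly condition (c) of Lemma \ref{Mrepr} at level $r+1$ of $\ty$ — but I must pass from $\ty$ to $\ty_F$. The key observation is that $\phi_1,\dots,\phi_R$ form an Okutsu frame of $F$ and that, whether or not $e_rf_r>1$, the truncated type $\op{Trunc}_R(\ty_F)$ has $\phi_1,\dots,\phi_R$ as its $\phi$-polynomials; so I can compute $v(\phi_{r+1}(\t))/\deg F$ and compare it with $v(\phi_R(\t))/m_R$ using (\ref{frame}) applied to $\phi_{r+1}$ viewed against the frame. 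Since $m_R\le m_r<m_{r+1}=\deg F$ (when $R=r$) or $m_R=\deg F$ with the last frame inequality strict (when $R=r-1$, forcing $e_rf_r=1$ and hence $\phi_{r+1}$ a non-optimal representative), in either case the chain in (\ref{frame}) gives $v(\phi_{r+1}(\t))/\deg F>v(\phi_R(\t))/m_R$, so $\phi_{r+1}\in\g_{R+1}$.

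Next I would address the converse. If $R=r$, then $\ty$ is strongly optimal ($e_rf_r>1$), so $\ty$ is itself an $F$-complete strongly optimal type of order $R=r$ whose $\phi$-polynomials are $\phi_1,\dots,\phi_R$; hence by Lemma \ref{Mrepr} (applied at level $R+1$) the representatives of $\ty$ are precisely the elements of $\g_{R+1}$, i.e.\ the set of Montes approximations equals the set of Okutsu approximations, so every Okutsu approximation is a Montes approximation. Conversely, if $R=r-1$ (the only other possibility, by the formula for $\op{depth}(F)$), then $e_rf_r=1$, so $m_r=m_{r+1}=\deg F$; a representative of $\ty$ has $R_r(\,\cdot\,)\sim\psi_r$, which is a condition at level $r$, strictly stronger than merely lying in $\g_{R+1}=\g_r$ (membership in $\g_r$ only requires the level-$(r-1)$ condition $R_{r-1}(\,\cdot\,)\sim\psi_{r-1}$). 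To see the sets genuinely differ, I would exhibit an Okutsu approximation that is not a Montes approximation: take $\phi_r$ itself, or more safely a polynomial $g\in\g_r$ with $v(g(\t))\ne v(\phi_{r+1}(\t))$, produced exactly as in the proof of Proposition \ref{compwidth} by spoiling $\phi_r$ with a lower-degree term of controlled $v_r$-value; such a $g$ lies in $\g_{R+1}$ but its residual polynomial $R_r(g)$ need not be $\sim\psi_r$, so $g$ is not a representative of $\ty$.

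\textbf{Main obstacle.} The routine part is the valuation bookkeeping; the delicate point is the careful handling of the two cases $R=r$ versus $R=r-1$ and, within the first half of the argument, the fact that when $r>R$ the type $\ty$ is no longer strongly optimal, so one cannot simply quote Lemma \ref{Mrepr} for $\ty$ directly but must route the comparison through the Okutsu frame and (\ref{frame}). I expect that verifying that a representative of a non-strongly-optimal type $\ty$ still satisfies the defining inequality for $\g_{R+1}$ — essentially that collapsing the last (trivial) level does not break condition (b)/(c) of Lemma \ref{Mrepr} — is where the one genuinely non-formal step lies, and it should follow cleanly from Proposition \ref{previous} (5) together with the inequality $v(\phi_r(\t))/m_r<v(\phi_{r+1}(\t))/m_{r+1}$ guaranteed by the Okutsu frame property of $[\phi_1,\dots,\phi_R]$ (noting $m_R=m_r$ in this case).
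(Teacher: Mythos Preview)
Your overall strategy matches the paper's: split into the cases $R=r$ and $R=r-1$, use the valuation criterion of Lemma~\ref{Mrepr} to show a Montes approximation lies in $\g_{R+1}$, and in the case $R=r-1$ exhibit $\phi_r$ itself as an Okutsu approximation that fails to be a representative of~$\ty$. The counterexample $\phi_r$ is exactly the paper's choice; the reason it is not a representative of~$\ty$ is that $N_r(\phi_r)$ reduces to the single point $(1,V_r)$, so $R_r(\phi_r)$ is a constant and $\psi_r\nmid R_r(\phi_r)$.

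There is, however, a slip in your forward implication when $R=r-1$. You invoke the Okutsu frame inequality~(\ref{frame}) to obtain $v(\phi_r(\t))/m_r<v(\phi_{r+1}(\t))/m_{r+1}$, but (\ref{frame}) is a statement about the frame $[\phi_1,\dots,\phi_R]=[\phi_1,\dots,\phi_{r-1}]$; its chain of strict inequalities stops at $v(\phi_R(\t))/m_R<v(\phi_{R+1}(\t))/m_{R+1}$, i.e.\ at the comparison of $\phi_{r-1}$ with $\phi_r$, and says nothing about $\phi_{r+1}$. (Your parenthetical ``$m_R=m_r$'' is also off: what holds is $m_{R+1}=m_r=m_{r+1}$, while $m_R=m_{r-1}<m_r$.) The inequality you need does hold, but it requires a direct computation, not the frame. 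The paper's proof sidesteps this entirely by working with condition~(c) of Lemma~\ref{Mrepr} rather than~(b): by the Theorem of the polygon one has
\[
v(\phi_{r+1}(\t))>\dfrac{V_{r+1}}{e_1\cdots e_r}
=\dfrac{e_rf_r(e_rV_r+h_r)}{e_1\cdots e_r}
=\dfrac{V_r+h_r}{e_1\cdots e_{r-1}}
>\dfrac{V_r}{e_1\cdots e_{r-1}},
\]
using $e_rf_r=1$ and $h_r>0$; this is precisely condition~(c) at level $R+1=r$, so $\phi_{r+1}\in\g_{R+1}$. That two-line chain replaces your detour through~(\ref{frame}).
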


\begin{proof}
If $R=r$, then the type $\ty$ is strongly optimal and the two concepts coincide. In fact, $\ty$ is always $F$-complete ($\ord_\ty(F)=1$), and Lemma \ref{Mrepr} shows that $\g_{R+1}$ is the set of representatives of $\ty$.

Suppose $R=r-1$, and let $\phi_{r+1}$ be a Montes approximation to $F$. The degree of $\phi_{r+1}$ is $m_{r+1}=m_r=m_{R+1}=\deg F$. By the Theorem of the polygon,
$$
v(\phi_{r+1}(\t))>\dfrac{V_{r+1}}{e_1\cdots e_r}=
\dfrac{e_rf_r(e_rV_r+h_r)}{e_1\cdots e_r}>\dfrac{V_r}{e_1\cdots e_{r-1}},
$$
because $h_r>0$. Therefore, $\phi_{r+1}$ satisfies condition (c) of Lemma \ref{Mrepr} for $i=R+1=r$, and it belongs to $\g_{R+1}$.
On the other hand, the polynomial $\phi_r=\phi_{R+1}$ is an Okutsu approximation to $F(x)$, but it is not a representative of $\ty$. In fact, the Newton polygon $N_r(\phi_r)$ is the single point $(1,V_r)$; thus, the residual polynomial $R_r(\phi_r)$ is a constant, and $\psi_r\nmid R_r(\phi_r)$.
\end{proof}

One cannot expect to deal only with strongly optimal types. For instance, if the polynomial $f(x)$ has
different irreducible factors that are Okutsu approximations to each other; these irreducible factors have the same Okutsu frames \cite[Lem.4.3]{okutsu} and hence the same strongly optimal types attached to them \cite[Thms.3.5+3.9]{okutsu}. Therefore, in order to distinguish them it is necessary to consider non-strongly optimal types. In other words, once we reach an Okutsu approximation $\phi_{R+1}$ to $F$, it may happen that $\phi_{R+1}$ is also an Okutsu approximation to other irreducible factors of $f(x)$; thus, it is necessary to go one step further and compute a Montes approximation $\phi_{R+2}=\phi_{r+1}$ to $F$, that singles out this irreducible factor.
This property suggests that a Montes approximation is the right object to start with for a single-factor lifting algorithm, aiming to improve a given approximation to $F$ till a prescribed precision is attained.

%Clearly Montes algorithm determines a Montes approximation to an irreducible factor in $r+1$ Montes steps.
%By Proposition \ref{compwidth} there are at most $\lceil\lambda_i\rceil$ improvement steps between the
%computation of $\phi_i(x)$ and $\phi_{i+1}(x)$.
%
%\begin{corollary}
%Montes algorithm finds a Montes approximation to an irreducible factor $F(x)$ of $f(x)$
%in at most $\sum_{i=1}^{r+1}\lceil\lambda_i\rceil$ steps.
%\end{corollary}

\subsection*{Measuring the quality of approximations}
For simplicity we set from now on:
$$
e:=e(L/K)=e_1\cdots e_r,\quad w:=v_{r+1}, \quad V:=w(\phi_{r+1})=V_{r+1}.
$$

The following result is an immediate consequence of \cite[Thm.2.11+Thm.3.1]{HN}.

\begin{lemma}\label{eqwphi}
Let $\Phi$ be a Montes approximation to $F$. By Proposition \ref{previous} (3),
 the principal polygon $N_{\Phi,w}^-(f)$ has length one, so that the slope
$-h_\Phi$ of its unique side is a negative integer (see Figure \ref{fignewton}).
We have $w(\Phi)=V$ and $v(\Phi(\t))=\left(V+h_\Phi\right)/e$.
\end{lemma}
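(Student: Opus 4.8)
The plan is to unwind the definitions and apply the two cited results from \cite{HN} to the Montes approximation $\Phi$, viewed as a polynomial in $\oo[x]$ to which the order-$r$ data of the type $\ty$ (the valuation $w=v_{r+1}$, the Newton polygon operator $N_{r+1}=N_{\Phi,w}$, and the residual polynomial operator $R_r$) can be applied. The statement has three assertions: that $N^-_{\Phi,w}(f)$ has length one, that $w(\Phi)=V$, and that $v(\Phi(\t))=(V+h_\Phi)/e$. I would prove them in that order, since the first feeds the second and third.

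First I would establish $w(\Phi)=V$. Since $\Phi$ is a representative of $\ty$, it is monic of degree $m_{r+1}=m_re_rf_r$ with $R_r(\Phi)\sim\psi_r$. Its $\phi_r$-adic development has leading term $\phi_r^{e_rf_r}$ (because $\deg\Phi=m_re_rf_r$ and $\Phi$ is monic), and the condition $R_r(\Phi)\sim\psi_r$ forces the Newton polygon $N_r(\Phi)$ to be one-sided of slope $\lambda_r$ with the residual polynomial of that side equal (up to a constant) to $\psi_r$; in particular the relevant point at abscissa $e_rf_r$ lies on the polygon. By the definition of $v_{r+1}$ in terms of $N_r$ and the slope $\lambda_r$ — shifting a line of slope $\lambda_r$ up to the polygon and reading off $e_r$ times the ordinate of its intersection with the vertical axis — one computes $w(\Phi)=v_{r+1}(\Phi)=e_rf_r(e_rV_r+h_r)=V_{r+1}=V$. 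This is essentially the content of \cite[Thm.2.11]{HN} (the same computation that gives $e_rf_rv_r(\phi_r)=V_r$ one level down, already invoked in the proof of Proposition \ref{compwidth}), so I would simply cite it.

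Next, for the polygon length: by Proposition \ref{previous} (3), $\op{ord}_{\psi_r}R_r(f)$ equals the length of $N^-_{r+1}(f)=N^-_{\Phi,w}(f)$. But $\ty$ is $f$-complete, which by Definition \ref{defs} means $\op{ord}_\ty(f)=\op{ord}_{\psi_r}(R_r(f))=1$. Hence the principal polygon has length one, i.e.\ it consists of a single side running from abscissa $0$ to abscissa $1$. Its slope is some negative rational number; I would then argue it is in fact a negative \emph{integer} $-h_\Phi$: a side of horizontal length one from $(0,w(f))$ (suitably normalized) to $(1,w(f\cdot\text{something}))$ has slope equal to the difference of two integer values of $w$ on polynomials, hence an integer, and it is negative because it is a side of the principal (negative-slope) polygon. (Equivalently, $e_{r+1}=1$ forces the denominator of the slope to be $1$.)

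Finally, for $v(\Phi(\t))$: apply the Theorem of the polygon \cite[Thm.3.1]{HN} with respect to the pair $(\Phi,w)$ and the root $\t$ of $F$. Since $\ty$ is $F$-complete and $N^-_{\Phi,w}(f)$ has a single side of slope $-h_\Phi$ and length one, the factor $F$ of $f$ corresponds to that side, and the theorem yields $v(\Phi(\t)) = (w(\Phi) + h_\Phi)/e = (V+h_\Phi)/e$, using $w(\Phi)=V$ from the previous step and $e=e_1\cdots e_r=e_1\cdots e_{r+1}$ (as $e_{r+1}=1$). The main obstacle, such as it is, is bookkeeping: making sure the normalization of $w=v_{r+1}$ relative to $v$ (the factor $e_1\cdots e_r$) is tracked correctly through the Theorem of the polygon, and confirming that \cite[Thm.3.1]{HN} applies to the order-$r$ data attached to the \emph{approximation} $\Phi$ rather than to the genuine $\phi_{r+1}$-data of some larger type — but since $\Phi$ is a representative of $\ty$, the order-$r$ operators $N_{\Phi,w}$, $R_r$ are exactly the ones the theorem is stated for, so no real difficulty arises. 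Everything else is a direct substitution into results already quoted in the excerpt.
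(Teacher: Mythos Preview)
Your proposal is correct and matches the paper's approach exactly: the paper's proof is simply the sentence ``an immediate consequence of \cite[Thm.2.11+Thm.3.1]{HN}'', and you have correctly unpacked what those two citations deliver --- Thm.~2.11 for $w(\Phi)=V$, Proposition~\ref{previous}~(3) together with $f$-completeness for the length-one polygon, and Thm.~3.1 (the Theorem of the polygon) for $v(\Phi(\t))=(V+h_\Phi)/e$. The only cosmetic slip is that the endpoints of the length-one side are $(0,w(a_0))$ and $(1,w(a_1\Phi))$ rather than $(0,w(f))$, but your conclusion that the slope is an integer (difference of two $w$-values) is unaffected.
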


As mentioned above, $v(\Phi(\t))$ is a measure of the
quality of the approximation; hence, the integer $h_\Phi$ is the relevant invariant to measure the precision of $\Phi(x)$ as an approximation to $F(x)$. Actually, $h_\Phi$ is the ideal invariant to look at, because it is also explicitly linked to an estimation of $v_1(F(x)-\Phi(x))$, which is the traditional value to measure the precision of an approximation.

\begin{lemma}[{\cite[Lem.4.5]{okutsu}}]\label{lem45okutsu}
Let $\Phi(x)\in\zpx$ be a Montes approximation to $F(x)$
and let $-h_\Phi$ be the slope of the principal polygon $N_{\Phi,w}^-(f)$.  Then
\[
F(x)\equiv \Phi(x)\md{\m^{\lceil\nu\rceil}},
\]
where $\nu=\nu_0+(h_\Phi/e)$
and $\nu_0$ is the (constant) rational number
\begin{equation}\label{nu0}
\nu_0:=\dfrac{h_1}{e_1}+\dfrac{h_2}{e_1e_2}+\cdots+\dfrac{h_r}{e_1\cdots e_r}.
\end{equation}
\end{lemma}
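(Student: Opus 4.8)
The plan is to reduce the congruence $F(x)\equiv\Phi(x)\pmod{\m^{\lceil\nu\rceil}}$ to a lower bound on $v_1(F-\Phi)$, and then to obtain that lower bound from the information about the Newton polygon $N_{\Phi,w}^-(f)$ together with Proposition~\ref{previous}. Recall that for $g\in\oo[x]$ the $v_1$-valuation $v_1(g)$ is exactly the largest power of $\m$ dividing all coefficients of $g$; so proving $F\equiv\Phi\pmod{\m^{\lceil\nu\rceil}}$ amounts to showing $v_1(F-\Phi)\ge\nu$ (then take ceilings, using that $v_1$ takes integer values). Write $a(x):=F(x)-\Phi(x)$. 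Since $F$ and $\Phi$ are both monic of degree $\deg F=m_{r+1}$, we have $\deg a<m_{r+1}$.

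First I would pin down $v(a(\t))$, or rather a good lower bound for it. Since $\Phi$ is a Montes approximation, $v(\Phi(\t))=(V+h_\Phi)/e$ by Lemma~\ref{eqwphi}, while $v(F(\t))=0$; the two are distinct, so by the ultrametric inequality $v(a(\t))=\min\{v(F(\t)),v(\Phi(\t))\}$. If $h_\Phi>0$ this minimum is $0$, which is too weak; the point is that one does \emph{not} estimate $v_1(a)$ through $v(a(\t))$ alone, but through the valuation $w=v_{r+1}$ and the structure of the $\phi_{r+1}$-adic (equivalently $\Phi$-adic) development. The cleaner route: expand $a(x)$ in powers of $\phi_{r+1}$ (degree $<m_{r+1}$, so $a$ itself is the ``constant term'', $a=a_0$), and instead expand $F$ and $\Phi$ in the $\phi_{r+1}$-adic development of $f$. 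From Proposition~\ref{previous}(3) and Lemma~\ref{eqwphi}, $N_{\Phi,w}^-(f)$ has length one with slope $-h_\Phi$, meaning $f=(\text{unit-like factor})\cdot\Phi + (\text{higher } \Phi\text{-order terms})$ in a way that isolates the linear factor $F$; concretely $w(F(x)-\Phi(x))$ can be read off as $V+h_\Phi$, since the side of slope $-h_\Phi$ joins the point $(1,w(\Phi)) = (1,V)$ to a point at height $V+h_\Phi$ over the vertical axis. Thus $v_i$-style bookkeeping gives $w(a)=w(F-\Phi)=V+h_\Phi$.

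Next I would convert the $w$-valuation of $a$ into a $v_1$-valuation. Here is where $\nu_0$ enters. For any $g\in\oo[x]$ one has the general comparison $v_1(g)\ge (w(g)-\text{correction})/e$, and the correction term accumulated through the $r$ levels is precisely $e_1\cdots e_r\cdot\nu_0 = h_1 e_2\cdots e_r + h_2 e_3\cdots e_r + \cdots + h_r$; equivalently, unwinding the recursive definitions of $v_1,\dots,v_{r+1}$ and of $V_i$, the minimal $v_1$-value of a polynomial of degree $<m_{r+1}$ and fixed $w$-value $m$ is $(m - (V_{r+1} \text{-type accumulation}))/e$. Plugging $w(a)=V+h_\Phi$ and simplifying using $V=V_{r+1}$ and the formula for $V_{r+1}$ in terms of the $h_i, e_i, f_i$, the bound becomes $v_1(a)\ge h_\Phi/e + \nu_0 = \nu$. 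Taking ceilings yields $v_1(F-\Phi)\ge\lceil\nu\rceil$, i.e. the asserted congruence. Since the statement is quoted verbatim from \cite[Lem.4.5]{okutsu}, one may alternatively just cite it; but if a self-contained argument is wanted, the steps above are the ones to fill in.

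The main obstacle is the passage from $w$-valuations to $v_1$-valuations with the exact constant $\nu_0$: one must track carefully how each level's slope $-h_i/e_i$ contributes to the loss when descending from $v_{i+1}$ to $v_i$, and verify that the telescoping sum of these contributions is exactly $\nu_0$ and not off by an $e_i$-factor somewhere. This is a routine but delicate induction on the level index, using $V_{i+1}=e_if_i(e_iV_i+h_i)$, Proposition~\ref{previous}(4)--(5), and the defining relation $v_{i+1}(g)=e_iH$ between the two valuations; everything else (the reduction to $v_1(F-\Phi)$, the ultrametric step, the reading of $w(F-\Phi)$ off the polygon) is immediate from the cited results.
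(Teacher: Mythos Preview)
The paper does not give its own proof of this lemma; it is quoted verbatim from \cite[Lem.~4.5]{okutsu}, as you yourself note. So there is no paper-side argument to compare against, and the question is only whether your sketch is sound.

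Your overall strategy---compute $w(F-\Phi)$ and then convert $w$ to $v_1$ with the correction $\nu_0$---is the right one, but the first step contains a genuine slip that propagates. You write ``$v(F(\t))=0$''; this is false. By definition $\t$ is a root of $F$, so $F(\t)=0$ and $v(F(\t))=\infty$. Consequently your ultrametric computation of $v(a(\t))$ is wrong, and so is the subsequent attempt to read $w(F-\Phi)$ off the Newton polygon $N_{\Phi,w}^-(f)$: that polygon is built from the $\Phi$-adic development of $f$, whose constant term is $f\bmod\Phi$, not $F-\Phi$, and the height over the vertical axis is $w(f)+h_\Phi$, not $V+h_\Phi$.

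The fix is much simpler than what you wrote. With $a:=F-\Phi$ one has $a(\t)=-\Phi(\t)$, hence $v(a(\t))=v(\Phi(\t))=(V+h_\Phi)/e$ by Lemma~\ref{eqwphi}. Since $\deg a<m_{r+1}$, Proposition~\ref{previous}(4) (equality case) gives $w(a)=e\,v(a(\t))=V+h_\Phi$ directly---no polygon reading needed. From here your second step is the real content: one must show that for any $g\in\oo[x]$ of degree $<m_{r+1}$ one has $v_1(g)\ge w(g)/e-\mu_F$, where $\mu_F=V/e-\nu_0$ (Theorem~\ref{theoexp}); plugging in $w(a)=V+h_\Phi$ then yields $v_1(a)\ge\nu$. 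This inequality is exactly the descent through the levels that you flag as ``the main obstacle'', and it is indeed proved by the inductive bookkeeping you describe (using the $(\phi_1,\dots,\phi_r)$-multiadic expansion and the bounds $0\le j_i<e_if_i$ on the exponents). So: right plan, but repair the first paragraph before the rest can stand.
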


Thus, when we replace $\Phi$ by successive (better) approximations to $F(x)$,
the improvement of the precision is determined by the growth of the parameter $h_\Phi$.

%%%%%%%%%%%%%%%%%%%%%%%%%%%%%%%%%%%%%%%%%%%%%%%%%%%%%%%%%%%%

\subsection*{Common arithmetic properties of Montes approximations}
Let $\Phi(x)$ be a Montes approximation to $F$.
Fix $\beta\in\ks$, a root of $\Phi$, and consider $N=K(\beta)$, $\oo_N$ the ring of integers of $N$ and $\m_N$ its maximal ideal.

Since $F$ and $\Phi$ are representatives of $\ty$, we have:
$$
\deg F=\deg\Phi,\quad w(F)=w(\Phi)=V,\quad \ord_\ty(F)=\ord_\ty(\Phi)=1.
$$
By  \cite[Lem.4.3]{okutsu} the Okutsu frame $[\phi_1,\dots,\phi_R]$ of $F(x)$ is also an Okutsu frame of $\Phi(x)$.
Therefore the two polynomials $F(x)$ and $\Phi(x)$ have the same Okutsu invariants.
In particular, the extensions $N/K$ and $L/K$ have the same ramification index and residual degree:
$$e(L/K)=e(N/K),\quad f(L/K)=f(N/K).
$$
Actually, as shown in \cite{Ok}, $L/K$ and $N/K$ have isomorphic maximal tamely ramified subextensions \cite[Cor.2.9]{okutsu}. Also, Proposition \ref{indextheorem}
 shows that $\ind(F)=\ind(\Phi)$.

The field $\ff{r+1}$ is a common computational representation of the residue fields of $L/K$ and $N/K$. More precisely, certain rational functions $\gamma_i(x)\in K(x)$, that depend only on the type $\ty$ \cite[Sec.2.4]{HN}, determine an explicit isomorphism,
\begin{equation}\label{gamma}
\gamma\colon\ff{r+1}\longrightarrow \oo_N/\m_N,\quad z_0\mapsto \overline{\beta},\,z_1\mapsto \overline{\gamma_1(\beta)},\,\dots,\,z_r\mapsto \overline{\gamma_r(\beta)}.
\end{equation}And we get a completely analogous isomorphism $\gamma\colon\ff{r+1}\longrightarrow \oo_M/\m_M$, just by replacing $\beta$ by $\alpha$.

The exponent of $F(x)$ is by definition the least non-negative integer $\exp(F)$ such that $$\pi^{\exp(F)}\oo_L\subseteq \oo[\t].$$
An explicit formula for $\exp(F)$ can be given in terms of the Okutsu invariants:
\begin{theorem}[{\cite[Thm.5.2]{newapp}}]\label{theoexp}
The exponent of $F(x)$ is
 $\exp(F)=\lfloor\mu_F\rfloor$, where
$$%\begin{equation}\label{exponent}
\mu_F:=\dfrac{V}{e}-\nu_0=\sum_{i=1}^R(e_if_i\cdots e_Rf_R-1)\dfrac{h_i}{e_1\cdots e_i},
$$%\end{equation}
and $\nu_0$ is the constant from equation (\ref{nu0}).
\end{theorem}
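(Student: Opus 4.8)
The plan is to read $\exp(F)$ off an explicit triangular $\oo$-basis of $\oo_L$ attached to the Okutsu frame $[\phi_1,\dots,\phi_R]$ of $F$. First I would invoke the higher-order integral-basis machinery (essentially the content of \cite[Sec.~4]{HN}, cf.\ also \cite{okutsu}): the monomials
$$
g_{\mathbf a}(x)=x^{a_0}\phi_1(x)^{a_1}\cdots\phi_R(x)^{a_R},\qquad 0\le a_0<f_0,\ \ 0\le a_i<e_if_i\ (1\le i\le R),
$$
are monic of pairwise distinct degrees filling $\{0,1,\dots,\deg F-1\}$, so $\{g_{\mathbf a}(\t)\}$ is an $\oo$-basis of $\oo[\t]$; and, setting $d_{\mathbf a}:=\lfloor v(g_{\mathbf a}(\t))\rfloor$, the normalised family $\{\,g_{\mathbf a}(\t)/\pi^{d_{\mathbf a}}\,\}$ is an $\oo$-basis of $\oo_L$. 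The non-trivial input is that these $d_{\mathbf a}$ are the genuine exponents, i.e.\ no $\oo$-combination of the $g_{\mathbf a}(\t)/\pi^{d_{\mathbf a}}$ becomes more divisible by $\pi$; this rests on the multiplicativity of the residual operators (which also gives $v(g_{\mathbf a}(\t))=\sum_{i=1}^R a_i\,v(\phi_i(\t))$, the $x^{a_0}$ factor contributing nothing to the valuation) together with the $\ff{}$-linear independence of the relevant reductions.

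Granting this, the quotient is diagonal, $\oo_L/\oo[\t]\cong\bigoplus_{\mathbf a}\oo/\m^{d_{\mathbf a}}$, so $\pi^m\oo_L\subseteq\oo[\t]$ if and only if $m\ge d_{\mathbf a}$ for all $\mathbf a$, i.e.
$$
\exp(F)=\max_{\mathbf a}d_{\mathbf a}=\Big\lfloor\;\max_{\mathbf a}v(g_{\mathbf a}(\t))\;\Big\rfloor,
$$
the last equality because $\lfloor\cdot\rfloor$ is non-decreasing. Since $v(\phi_i(\t))>0$ for every $i$, the maximum of $v(g_{\mathbf a}(\t))=\sum_i a_i v(\phi_i(\t))$ over the admissible exponent box is attained at $a_i=e_if_i-1$. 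So the theorem reduces to the identity $\sum_{i=1}^R(e_if_i-1)v(\phi_i(\t))=\mu_F$.

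That identity is a routine manipulation. Writing $n_\ell:=e_\ell f_\ell$, Proposition \ref{previous} (5) and the recursion $V_i=e_{i-1}f_{i-1}(e_{i-1}V_{i-1}+h_{i-1})$ give the one-step relation $v(\phi_i(\t))=n_{i-1}\,v(\phi_{i-1}(\t))+h_i/(e_1\cdots e_i)$ (with $\phi_0:=1$); unrolling, $v(\phi_i(\t))=\sum_{j=1}^i (n_j\cdots n_{i-1})\,h_j/(e_1\cdots e_j)$. Substituting into $\sum_{i=1}^R(n_i-1)v(\phi_i(\t))$ and interchanging the order of summation, the inner sum over $i$ telescopes, since $(n_i-1)(n_j\cdots n_{i-1})=(n_j\cdots n_i)-(n_j\cdots n_{i-1})$, to $(n_j\cdots n_R)-1$; hence
$$
\sum_{i=1}^R(n_i-1)v(\phi_i(\t))=\sum_{j=1}^R\big(e_jf_j\cdots e_Rf_R-1\big)\frac{h_j}{e_1\cdots e_j},
$$
which is the closed form of $\mu_F$. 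For the equivalent expression $V/e-\nu_0$: the same telescoping applied to $h_i/(e_1\cdots e_i)=v(\phi_i(\t))-n_{i-1}v(\phi_{i-1}(\t))$ writes $\nu_0$ in terms of the $v(\phi_i(\t))$, while $V/e=V_{r+1}/(e_1\cdots e_r)$ equals $e_rf_r\,v(\phi_r(\t))$ (again by Proposition \ref{previous} (5) and the recursion), and a short case split on $R=r$ versus $R=r-1$ (where $e_rf_r=1$) gives $V/e-\nu_0=\sum_{i=1}^R(n_i-1)v(\phi_i(\t))$ as well. Thus $\exp(F)=\lfloor\mu_F\rfloor$.

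The step I expect to be the real obstacle is the first one: justifying that $\{g_{\mathbf a}(\t)/\pi^{\lfloor v(g_{\mathbf a}(\t))\rfloor}\}$ is an $\oo$-basis of $\oo_L$. Knowing the single values $v(\phi_i(\t))$ from Proposition \ref{previous} (5) does not suffice; one genuinely needs the $\ff{}$-linear independence of the reductions of the normalised products, which is where the higher-order Montes/Okutsu theory does the work. Everything downstream is bookkeeping. (One can sanity-check the result against Proposition \ref{indextheorem}, since $\ind(F)=\sum_{\mathbf a}\lfloor v(g_{\mathbf a}(\t))\rfloor$ must then reproduce the index formula; but that recovers only the sum of the elementary divisors, not their maximum, so it is no substitute for the explicit basis.)
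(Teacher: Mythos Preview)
The paper does not prove this theorem; it is quoted from \cite[Thm.~5.2]{newapp} and used as a black box. So there is no ``paper's own proof'' to compare against. Your approach is correct and is essentially the standard one (and, as far as one can tell from the citation, the one in \cite{newapp}): the Okutsu frame yields a triangular $\oo$-basis $\{g_{\mathbf a}(\t)/\pi^{d_{\mathbf a}}\}$ of $\oo_L$, so the exponent is the largest elementary divisor $\max_{\mathbf a}d_{\mathbf a}$, and this maximum is attained at the top corner $a_i=e_if_i-1$ of the exponent box. Your telescoping computation of $\sum_i(e_if_i-1)v(\phi_i(\t))$ and the verification that it equals $V/e-\nu_0$ (with the $R=r$ vs.\ $R=r-1$ case split) are both fine.

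Two small remarks. First, the identity $v(g_{\mathbf a}(\t))=\sum_i a_i\,v(\phi_i(\t))$ is just multiplicativity of the valuation and needs no residual-operator input; the residual machinery is only needed, as you correctly say later, for the $\ff{}$-linear independence of the reductions (equivalently, that the $d_{\mathbf a}$ really are the elementary divisors). Second, your claim that ``the $x^{a_0}$ factor contributes nothing'' deserves one line of justification: if $f_0=1$ then $a_0=0$ is forced, while if $f_0>1$ then $\psi_0$ is irreducible of degree $\ge2$, hence $\psi_0(0)\ne0$, so $\overline{\t}\ne0$ and $v(\t)=0$. With that, your argument is complete, and you have already put your finger on the one genuinely nontrivial ingredient (the Okutsu/Montes integral basis), which is available from \cite{Ok}, \cite[Sec.~4]{HN}, or \cite{okutsu}.
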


Therefore, the polynomials $F$ and $\Phi$ have the same exponent too.
Moreover, all results of \cite{HN,Malgorithm,okutsu} that relate arithmetic properties of the extension $L/K$ with the invariants stored by the type $\ty$, can be equally applied to link $\ty$ with arithmetic properties of the extension $N/K$.
For instance, we shall frequently use the following remarks, that follow from Proposition \ref{previous} (4) and \cite[Lem.2.17 (1)]{HN}.

\begin{lemma}\label{wv}
Let $\Phi(x)\in\zpx$ be a Montes approximation to $F$, and take $\beta\in\ks$ a root of $\Phi$. Let $P(x)\in K[x]$ be an arbitrary polynomial.
\begin{enumerate}
\item If $\deg P< \deg F$, then $v(P(\beta))=w(P)/e=v(P(\t))$.
\item If $P(x)=\sum_{0\le s}a_s(x)\Phi(x)^s$ is the canonical $\Phi$-adic development of $P$, then $w(P)=\min_{0\le s}\{w(a_s\Phi^s)\}$.
\end{enumerate}
\end{lemma}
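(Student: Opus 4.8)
The plan is to deduce both assertions from the two results quoted just before the lemma: the equality case of Proposition \ref{previous} (4) applied at level $i=r+1$, and \cite[Lem.2.17 (1)]{HN}.

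For part (1), recall that $w=v_{r+1}$, that $e=e_1\cdots e_r$ is the product $e_1\cdots e_{i-1}$ occurring in Proposition \ref{previous} (4) for $i=r+1$, and that $m_{r+1}=\deg\Phi=\deg F$ because $\Phi$ is a representative of $\ty$. Since $\deg P<\deg F=m_{r+1}$, Proposition \ref{previous} (4) with $i=r+1$ gives $v(P(\t))=v_{r+1}(P)/(e_1\cdots e_r)=w(P)/e$ at once. To obtain the same identity with $\t$ replaced by a root $\beta$ of $\Phi$, I would invoke the principle recorded in the discussion following Theorem \ref{theoexp}: every statement of \cite{HN} that ties the invariants of $\ty$ to the extension $L/K$ transfers verbatim to $N/K$, since $\Phi$, being a representative of $\ty$, is irreducible over $\oo$ and $\ty$ is $\Phi$-complete --- indeed $\ord_\ty(\Phi)=\ord_{\psi_r}(R_r(\Phi))=1$ because $R_r(\Phi)\sim\psi_r$. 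Applying Proposition \ref{previous} (4) at $i=r+1$ to $N=K(\beta)$ then yields $v(P(\beta))=w(P)/e$, and combining the two equalities proves (1).

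For part (2), the canonical $\Phi$-adic development $P=\sum_{0\le s}a_s\Phi^s$ has $\deg a_s<\deg\Phi=m_{r+1}$ for every $s$, and $w(a_s\Phi^s)=w(a_s)+sV$ since $w(\Phi)=V$. As $w$ is a valuation, the inequality $w(P)\ge\min_{0\le s}\{w(a_s\Phi^s)\}$ is automatic, so the substance of the statement is the reverse inequality, i.e.\ that no cancellation occurs among the terms of minimal value. This is precisely the content of \cite[Lem.2.17 (1)]{HN}, whose hypothesis is exactly that $\Phi$ is a representative of a type of order $r$; the property $R_r(\Phi)\sim\psi_r$ with $\psi_r$ irreducible over $\ff{r}$ is what excludes such cancellation in the graded structure associated with $v_{r+1}$. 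Thus for (2) I would simply cite that lemma, after noting that its hypothesis holds here by construction of $\Phi$ as a Montes approximation.

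The mathematics is carried entirely by the two cited results, so I expect the only real obstacle to be a matter of bookkeeping: Proposition \ref{previous} (4) is phrased for $F$ and its root $\t$, so the passage to a root $\beta$ of the representative $\Phi$ has to be spelled out explicitly through the ``$\ty$ is $\Phi$-complete'' remark and the transfer principle for type--arithmetic correspondences. Once that is granted, each of the two parts is a one-line consequence.
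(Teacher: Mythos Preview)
Your proposal is correct and matches the paper's approach exactly: the paper does not give a proof of this lemma but simply states that it follows from Proposition~\ref{previous}~(4) and \cite[Lem.~2.17~(1)]{HN}, and you have correctly unpacked how each citation yields the corresponding part, including the transfer from $\t$ to $\beta$ via the $\Phi$-completeness of $\ty$ discussed just before the lemma.
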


In the lifting algorithm we will need to construct a polynomial $\Psi(x)\in K[x]$ such that $\deg\Psi<\deg F$ and $w(\Psi)$ has a given value. To this end we can use \cite[Algorithm 14]{pauli10}.

\begin{lemma}\label{lemPsi}
Let $m=\deg F$, $u\in\Z$, and $R$ the Okutsu depth of $F$.
There is an algorithm that finds exponents $j_\pi\in\Z$ and $j_1,\dots,j_R\in\N$ such that
\[
\Psi(x)=\pi^{j_\pi}\phi_1(x)^{j_1}\cdot\dots\cdot\phi_R(x)^{j_R}
\]
has degree less than $m$ and $w(\Psi)=u$, in $O((\log m)^3)$ operations of integers less than $m$.
\end{lemma}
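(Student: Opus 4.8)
The plan is to turn the existence claim into a purely numerical problem modulo $e:=e_1\cdots e_R$ and to solve that problem by a greedy sweep down the levels of the type. Abbreviate $w_i:=w(\phi_i)$ for $1\le i\le R$. Applying Lemma \ref{wv}(1) to the constant polynomial $\pi$ and to $\phi_i$ (which has degree $m_i\le m_R<m=\deg F$) gives $w(\pi)=e$ and $w_i=e\,v(\phi_i(\t))$; combining the latter with Proposition \ref{previous}(5) yields the closed form $w_i=E_iV_i+E_{i+1}h_i$, where $E_i:=e_ie_{i+1}\cdots e_R$ (so $E_{R+1}=1$ and $E_1=e$; when $R=r-1$ one uses $e_rf_r=1$, hence $e_r=1$, to see $e_1\cdots e_r=e$). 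Since $w$ is a valuation, $w(\Psi)=j_\pi e+\sum_{i=1}^{R}j_iw_i$, while $\deg\Psi=\sum_{i=1}^{R}j_im_i$. Hence it suffices to produce $j_1,\dots,j_R\in\N$ with $\sum_i j_im_i<m$ and $\sum_i j_iw_i\equiv u\pmod e$; one then sets $j_\pi:=(u-\sum_i j_iw_i)/e\in\Z$, which is well defined and gives $w(\Psi)=u$.

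The structural input is the flag of subgroups $0=E_1\Z/e\Z\subseteq E_2\Z/e\Z\subseteq\cdots\subseteq E_{R+1}\Z/e\Z=\Z/e\Z$, whose $i$-th successive quotient is isomorphic to $\Z/e_i\Z$ via $E_{i+1}k\mapsto k\bmod e_i$. Writing $w_i=E_{i+1}(e_iV_i+h_i)$, one reads off that the class of $w_i$ modulo $e$ lies in $E_{i+1}\Z/e\Z$ and maps to $h_i\bmod e_i$ in this quotient; this is a unit precisely because $\gcd(h_i,e_i)=1$. So the algorithm is: put $t\leftarrow u\bmod e$ and, for $i=R,R-1,\dots,1$, let $j_i$ be the unique element of $\{0,1,\dots,e_i-1\}$ with $t\equiv j_iw_i\pmod{E_i}$ — obtained by dividing $t$ (which the loop keeps divisible by $E_{i+1}$) by $E_{i+1}$, reducing modulo $e_i$, and multiplying by $h_i^{-1}\bmod e_i$ — and then replace $t$ by $(t-j_iw_i)\bmod e$, which is now divisible by $E_i$. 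After the step $i=1$ we have $t\in E_1\Z/e\Z=0$, i.e.\ $\sum_i j_iw_i\equiv u\pmod e$, as needed.

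The degree bound is then automatic from the ranges of the $j_i$: $\deg\Psi=\sum_{i=1}^{R}j_im_i\le\sum_{i=1}^{R}(e_i-1)m_i\le\sum_{i=1}^{R}(e_if_i-1)m_i=\sum_{i=1}^{R}(m_{i+1}-m_i)=m_{R+1}-m_1<m$, using $m_{i+1}=e_if_im_i$. For the complexity, $e_if_i\ge2$ for every $1\le i\le R$ forces $m=m_{R+1}\ge 2^{R}$, so the number of levels is $R=O(\log m)$; each level is handled with $O((\log m)^2)$ operations on integers bounded by $m$ (essentially one modular inversion modulo $e_i\le m$ together with the reductions needed to evaluate $w_i$ modulo $e\le m$, all quantities kept reduced modulo $e$ throughout), which yields the stated bound $O((\log m)^3)$, in accordance with \cite[Algorithm 14]{pauli10}.

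The one point I would be most careful about is the loop invariant ``$t$ is divisible by $E_{i+1}$ at the start of level $i$'', which is what legitimizes passing to $\Z/e_i\Z$ at each step and guarantees that $j_i$ exists and is unique. It propagates exactly because $w_i\in E_{i+1}\Z$ and $h_i$ is a unit modulo $e_i$, so once the identity $w_i=E_iV_i+E_{i+1}h_i$ and the coprimality $\gcd(h_i,e_i)=1$ are established, the remaining verifications (that the final $t$ vanishes, that $j_\pi$ is an integer, that the exponents lie in the required ranges) are routine bookkeeping.
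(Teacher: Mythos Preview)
Your argument is correct and describes exactly the algorithm the paper reproduces as the ``Universal polynomial routine'' (itself taken from \cite[Algorithm~14]{pauli10}); the paper merely states the routine without justification, whereas you supply the correctness proof via the flag $E_i\Z/e\Z$, the explicit formula $w_i=E_iV_i+E_{i+1}h_i$, and the telescoping degree bound $\sum_i(e_if_i-1)m_i=m_{R+1}-m_1<m$. Your loop variable $t$ is just $E_{i+1}$ times the paper's running quantity $M$ (up to the $j_iV_i$ bookkeeping), so the two descriptions unwind to the same computation.
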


For the commodity of the reader we reproduce the algorithm. First, we express $u=Ne+t$, $0\le t <e$. Then, the routine shown below computes $j_1,\dots,j_R$ and an integer $M$. Finally one takes $j_\pi=N+M$.\medskip

\noindent{\bf Universal polynomial routine}\medskip

 $j_R\leftarrow h_R^{-1}t \bmod e_R$

 $M\leftarrow (t-j_Rh_R)/e_R$

 For $i=R$ to $2$ by $-1$ do

\qquad\qquad$j_{i-1}\leftarrow h_{i-1}^{-1}(M-j_{i}V_{i}) \bmod e_{i-1}$

\qquad\qquad$M\leftarrow(M-j_iV_i-j_{i-1}h_{i-1})/e_{i-1}$\medskip

Along the process of improving the Montes approximations to $F$, the required value of $w(\Psi)$ remains constant. By Lemma \ref{wv}, the value $v(\Psi(\beta))=w(\Psi)$ remains constant too: it does not depend on the pair $(\Phi,\beta)$. Hence, $\Psi$ is a kind of universal polynomial that is computed only once as an initial datum, and used in all iterations.

%\end{definition}

%From the explicit formula for the $\nu_i$, given in Proposition \ref{previous} (6),
%the vector $(j_1,\dots,j_r)$ is uniquely determined by the condition:
%\begin{multline*}
%\dfrac{j_1h_1+j_2v_2(\phi_2)}{e_1}+\dfrac{j_2h_2+j_3v_3(\phi_3)}{e_1e_2}+\cdots+\dfrac{j_{r-1}h_{r-1}+j_rV_r}{e_1\cdots e_{r-1}}+\dfrac{j_rh_r}{e_1\cdots e_r}\\\equiv \dfrac{1}{e_1\cdots e_r} \md{\Z},
%\end{multline*}
%and this leads to an easy recurrent way to compute $j_r,j_{r-1},\dots,j_1$:

%By (\ref{samee}) and Lemma \ref{wv} we have $v(\Pi(\beta))=1/e=1/e(N/K)$, so that $\Pi(\beta)$ is a uniformizer of the field $N$.
%See \cite[Algorithm 14]{pauli10} for an algorithm that,
%for a given $a\in\frac{1}{e}\Z$, computes exponents $j_\pi,j_1,\dots,j_r$ such that
%\[
%v\left(\pi^{j_\pi}v(\phi_1(\t)^{j+1}\cdots\phi_r(\t)^{j_r}\right)=a.
%\]

%%%%%%%%%%%%%%%%%%%%%%%%%%%%%%%%%%%%%%%%%%%%%%%%%%%%%%%%%%%%%%%%%%%%%%%%%%%%%%%%%%%%%%%%%%%%%%%%%%%

\section{Improving a Montes approximation}\label{secImproving}
We keep all notation of section \ref{secApprox}, and we denote from now on $m:=\deg F=[L\colon K]$.

The aim of this section is to find a quadratic convergence iteration method to improve the Montes approximations to
$F(x)$. More precisely, given a Montes approximation $\phi(x)$, we shall construct another Montes
approximation $\Phi(x)$ such that $h_\Phi\ge 2h_\phi$, where $h_\Phi$ and $h_\phi$ are the slopes of the Newton polygons
$N^-_{\Phi,w}(f)$ and $N^-_{\phi,w}(f)$, respectively.

The general idea of the lifting method is inspired in the classical Newton iteration method.
Instead of Taylor development of $f(x)$, we consider its $\phi$-adic development:
$$%\begin{equation}\label{eqphiexp}
f(x)=\sum_{0\le s}a_s(x)\phi(x)^s,\quad \deg a_i<m.
$$%\end{equation}
The principal Newton polygon $N_{\phi,w}^-(f)$ has length one,
as illustrated in Figure \ref{fignewton}. Lemma \ref{wv},(2) shows that
$w(f)=\min_{0\le s\le m}\{w(a_s\phi^s)\}=w(a_1\phi)$.
Therefore, for all $s\ge 2$, Lemma \ref{wv},(1) shows that:
\begin{equation}\label{eqvalnewt}
v(a_1(\t)\phi(\t))=\dfrac{w(a_1)}e+v(\phi(\t))\le\dfrac{w(a_s\phi^{s-1})}e+v(\phi(\t))< v(a_s(\t)\phi(\t)^s),
\end{equation}
the last inequality because $w(\phi)/e=V/e<v(\phi(\t))$ by the Theorem of the polygon.
If we evaluate the $\phi$-adic development at $\t$ we obtain
\[
\phi(\t)+\frac{a_0(\t)}{a_1(\t)}=-\frac{\sum_{2\le s}a_s(\t)\phi(\t)^s}{a_1(\t)}.
\]
With (\ref{eqvalnewt}) we get
\[
v\left(\phi(\t)+\frac{a_0(\t)}{a_1(\t)}\right)=v\left(\frac{\sum_{2\le s}a_s(\t)\phi(\t)^s}{a_1(\t)}\right)>v\left(\phi(\t)\right).
\]
As $\phi(x)$ is irreducible we can use the extended Euclidean algorithm to obtain
$a_1^{-1}(x)\in K[x]$ with $a_1(x)a_1^{-1}(x)\equiv 1\mod \phi(x)$.
For $\Phi(x):=\phi(x)+A(x)$ where $A(x)\equiv a_0(x)a_1^{-1}(x)\mod\phi(x)$, with $\deg A<\deg\phi$, we get
\[
\frac{V+h_\Phi}{e}=v(\Phi(\t))=v(\phi(\t)+A(\t))>v(\phi(\t))=\frac{V+h_\phi}{e}.
\]
Thus $h_\Phi>h_\phi$ and $\Phi(x)$ is a better approximation to the irreducible factor $F(x)$ of $f(x)$.

\begin{figure}\caption{Newton polygon $N_{\phi,w}(f)$ where $f(x)=a_0(x)+a_1(x)\phi+\dots$ is
the $\phi$-adic expansion of $f(x)$ and $\phi(x)$ is a Montes approximation to $F(x)$.}\label{fignewton}
\setlength{\unitlength}{6mm}
\begin{picture}(13,10)
\put(4.85,7.8){$\bullet$}\put(6.85,3.8){$\bullet$}\put(8.85,3.8){$\bullet$}
\put(4,1){\line(1,0){7}}\put(5,0){\line(0,1){9}}
\put(5,8){\line(1,-2){2}}\put(5.02,8){\line(1,-2){2}}
\put(7,4){\line(1,0){2}}\put(7,4.01){\line(1,0){2}}
\put(9,4){\line(2,1){1.2}}\put(9.02,4){\line(2,1){1.4}}
\multiput(7,0.9)(0,.25){13}{\vrule height2pt}
\multiput(4.9,4)(.25,0){9}{\hbox to 2pt{\hrulefill }}
\put(6.1,6){$-h_\phi$}
\put(3.0,7.8){$w(a_0)$}
\put(0.3,3.8){$w(f)=w(a_1\phi)$}
\put(6.85,0.3){$1$}
\put(5.1,0.3){$0$}
\put(8.6,6){$N_{\phi,w}(f)$}
\end{picture}
\end{figure}
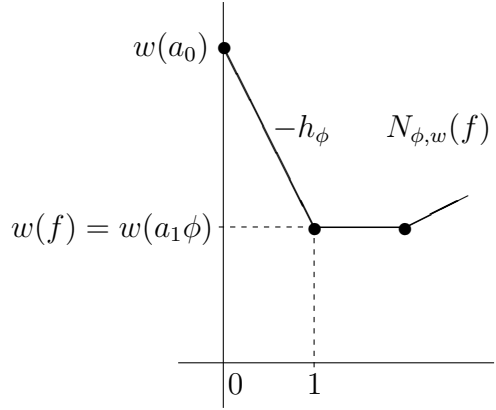

In the following we show that, as in the classical method, the measure of the approximation is doubled in each iteration: $h_\Phi\ge 2h_\phi$; thus, we are led to a quadratic convergence algorithm.
A crucial point for efficiency is to avoid the inversion of $a_1(\t)$ in $L$. To this end, we demonstrate that classical Newton lifting yields a more efficient way for finding an approximation to the polynomial $a_1^{-1}(x)$
and that in each iteration only one Newton lifting step is needed.

\subsection{The main theorem: doubling the slope}\label{subsecmain}
Let $\phi(x)\in\zpx$ be a given Montes approximation to the irreducible factor $F(x)$ of $f(x)$.
We choose a root $\alpha\in\ks$ of $\phi(x)$ and consider the field $M=K(\alpha)$
with ring of integers $\oo_M$ and maximal ideal $\m_M$.

The next theorem gives a criterion to ensure that the slope $h_\phi$ is (at least) doubled if we take a Montes approximation of the form $\Phi(x)=\phi(x)-A(x)$, for an adequate polynomial $A(x)$ of degree less than $m$.

\begin{theorem}\label{main}
Let $\phi$ be a Montes approximation to $F$, and let $h$ be a positive integer, $0<h\le h_\phi$.
For any polynomial $A(x)\in\zpx$ of degree less than $m=\deg F$, the following conditions are equivalent:
\begin{enumerate}
\item $v\left(a_0(\alpha)+a_1(\alpha)A(\alpha)\right)\ge (w(f)+2h)/e$,
\item The polynomial $\Phi(x):=\phi(x)-A(x)$, is a Montes approximation to $F(x)$, and $N_{\Phi,w}^-(f)$ is one-sided of slope $-h_\Phi$, with  $h_\Phi\ge 2h$,
\item $v\left(a_0(\t)+a_1(\t)A(\t)\right)\ge (w(f)+2h)/e$.
\end{enumerate}
\end{theorem}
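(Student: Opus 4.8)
The plan is to prove the three conditions equivalent by establishing $(2)\Leftrightarrow(3)$, which is an identity ``at the root $\t$ of the unknown factor $F$'', and then $(1)\Leftrightarrow(3)$, which records the fact that the test may be performed at the root $\alpha$ of the known polynomial $\phi$ instead. The tool for the first part is the identity obtained by evaluating the $\phi$-adic development of $f$ at $\t$: since $f(\t)=0$ and $w(f)=w(a_1\phi)$,
$$
a_1(\t)\Phi(\t)=a_1(\t)\phi(\t)-a_1(\t)A(\t)=-\bigl(a_0(\t)+a_1(\t)A(\t)\bigr)-\sum_{s\ge2}a_s(\t)\phi(\t)^s,
$$
together with the bound $v\bigl(\sum_{s\ge2}a_s(\t)\phi(\t)^s\bigr)\ge(w(f)+2h_\phi)/e\ge(w(f)+2h)/e$, which follows from $w(a_s)+sV\ge w(f)$, from $v(\phi(\t))=(V+h_\phi)/e$ (Lemma~\ref{eqwphi}), and from $h\le h_\phi$. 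I shall use throughout that $\deg a_0,\deg a_1,\deg A<m$, so $v(a_i(\t))=w(a_i)/e=v(a_i(\alpha))$ by Lemma~\ref{wv}(1), and that, reading off $N_{\phi,w}(f)$ (Figure~\ref{fignewton}, Lemma~\ref{eqwphi}), $w(a_0)=w(a_1)+V+h_\phi$, hence $w(f)=w(a_1)+V$.

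For $(3)\Rightarrow(2)$: the identity and the tail bound give $v(a_1(\t)\Phi(\t))\ge(w(f)+2h)/e$, hence $v(\Phi(\t))\ge(w(f)+2h-w(a_1))/e=(V+2h)/e>V/e$. Since $\Phi=\phi-A$ is monic of degree $m=\deg F$ and $\Phi-F$ has degree $<m$ with $v_{r+1}(\Phi-F)=e\,v(\Phi(\t))>V=v_{r+1}(F)$ (Proposition~\ref{previous}(4)), we get $v_{r+1}(\Phi)=V$ and, by \cite[Prop.2.8]{HN}, $R_r(\Phi)=R_r(F)\sim\psi_r$ (the last because $F$ is a representative of $\ty$); thus $\Phi$ is a Montes approximation to $F$. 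Then Lemma~\ref{eqwphi} tells us $N_{\Phi,w}^-(f)$ is one-sided of integer slope $-h_\Phi$ with $v(\Phi(\t))=(V+h_\Phi)/e$, so $h_\Phi\ge2h$. Conversely, $(2)\Rightarrow(3)$: a Montes approximation $\Phi$ with $h_\Phi\ge2h$ has $v(\Phi(\t))=(V+h_\Phi)/e\ge(V+2h)/e$, whence $v(a_1(\t)\Phi(\t))\ge(w(a_1)+V+2h)/e=(w(f)+2h)/e$; combining this with the tail bound, the identity forces $v(a_0(\t)+a_1(\t)A(\t))\ge(w(f)+2h)/e$.

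For $(1)\Leftrightarrow(3)$, put $Q:=a_0+a_1A\in\oo[x]$, of degree $\le2m-2$, and divide $a_1A$ by $\phi$: $a_1A=q\phi+\rho$ with $q,\rho\in\oo[x]$, $\deg q\le m-2$, $\deg\rho<m$. Then $Q=(a_0+\rho)+q\phi$ is the canonical $\phi$-adic development, so $Q(\alpha)=(a_0+\rho)(\alpha)$ has $v(Q(\alpha))=w(a_0+\rho)/e=v((a_0+\rho)(\t))$ by Lemma~\ref{wv}(1), while $Q(\t)=(a_0+\rho)(\t)+q(\t)\phi(\t)$. Applying Lemma~\ref{wv}(2) to the $\phi$-adic development $a_1A=\rho+q\phi$ gives $w(q)+V\ge w(a_1A)=w(a_1)+w(A)$, hence
$$
v\bigl(q(\t)\phi(\t)\bigr)=\frac{w(q)+V+h_\phi}{e}\ge\frac{w(a_1)+w(A)+h_\phi}{e}.
$$
If $w(A)\ge V+h$ this lower bound is $\ge(w(a_1)+V+2h)/e=(w(f)+2h)/e$ (using $h\le h_\phi$), so the summand $q(\t)\phi(\t)$ does not affect whether $v(Q(\t))\ge(w(f)+2h)/e$; therefore $(3)$ holds iff $v((a_0+\rho)(\t))\ge(w(f)+2h)/e$ iff $v(Q(\alpha))\ge(w(f)+2h)/e$, i.e.\ iff $(1)$. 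If instead $w(A)<V+h$ (so that $w(A)<V+h_\phi$, as $h\le h_\phi$), then $v(a_1(\t)A(\t))=(w(a_1)+w(A))/e$ is strictly less than $v(a_0(\t))=(w(a_1)+V+h_\phi)/e$, and likewise at $\alpha$, so $v(Q(\t))=v(Q(\alpha))=(w(a_1)+w(A))/e<(w(f)+2h)/e$; both $(1)$ and $(3)$ fail and the equivalence again holds.

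The valuation bookkeeping above is routine; the point that needs care is the equivalence $(1)\Leftrightarrow(3)$, because $a_1A$ has degree $\ge m$ and therefore cannot be evaluated ``in the same way'' at $\t$ and at $\alpha$. The whole obstruction is concentrated in the correction term $q(\t)\phi(\t)$ coming from reduction modulo $\phi$, and the estimate $w(q)\ge w(a_1)+w(A)-V$ furnished by Lemma~\ref{wv}(2) is exactly what guarantees that this term never interferes with the threshold $(w(f)+2h)/e$ in the range $w(A)\ge V+h$, the complementary range being handled by a direct computation showing both conditions fail.
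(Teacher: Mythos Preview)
Your proof is correct and takes a genuinely different route from the paper's.

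The paper argues in a cycle $(1)\Rightarrow(2)\Rightarrow(3)\Rightarrow(1)$. The heavy step is $(1)\Rightarrow(2)$: it expands $f=\sum_s a_s(\Phi+A)^s=\sum_k b_k\Phi^k$ in a (non-canonical) $\Phi$-expansion, then breaks $b_0=a_0+a_1A+\sum_{s\ge2}a_sA^s$ into pieces $d_0,d_1,u_0,u_1,g_1$ coming from the canonical $\Phi$-developments of $a_0+a_1A$ and of the tail, and chases $w$-values of all these pieces until it can conclude $w(c_0)\ge w(f)+2h$ for the $0$-th canonical $\Phi$-coefficient $c_0$ of $f$. The implication $(3)\Rightarrow(1)$ is then obtained by a symmetry argument (``exchange $\alpha$ and $\t$'').

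You instead prove $(2)\Leftrightarrow(3)$ directly from the single identity
\[
a_1(\t)\Phi(\t)=-(a_0(\t)+a_1(\t)A(\t))-\sum_{s\ge2}a_s(\t)\phi(\t)^s,
\]
together with the tail bound $v\bigl(\sum_{s\ge2}a_s(\t)\phi(\t)^s\bigr)\ge(w(f)+2h_\phi)/e$, and then handle $(1)\Leftrightarrow(3)$ by writing $a_1A=q\phi+\rho$ and bounding $v(q(\t)\phi(\t))$ via $w(q)\ge w(a_1)+w(A)-V$ (Lemma~\ref{wv}(2) applied to $\phi$). This is cleaner and more elementary: it bypasses the $\Phi$-adic coefficient bookkeeping entirely, and it replaces the paper's symmetry argument by an explicit computation.

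What the paper's longer route buys is Corollary~\ref{ac}: the bound $w(c_1-a_1)\ge w(a_1)+h$ on the first $\Phi$-coefficient of $f$ drops out of the intermediate estimates on $d_1,u_1,g_1$. This corollary is not used in the theorem itself, but it is essential later for Proposition~\ref{empalma}, which justifies performing only \emph{one} Newton inversion step per iteration of the main loop. Your argument, as it stands, does not yield Corollary~\ref{ac}; if you wanted it, you would need to supplement your proof with a separate (short) estimate on $c_1$.
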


\begin{proof}
By the shape of $N_{\phi,w}^-(f)$, we know that $w(a_0)\ge w(f)+h$, $w(a_1)=w(f)-w(\phi)=w(f)-V$. Since $\deg a_0,\,\deg a_1<m$, Lemma \ref{wv} shows that:
\begin{equation}\label{vai}\as{1.2}
\begin{array}{l}
 v(a_0(\t))=v(a_0(\alpha))=w(a_0)/e\ge(w(f)+h)/e,\\
 v(a_1(\t))=v(a_1(\alpha))=w(a_1)/e=(w(f)-V)/e.
\end{array}
\end{equation}
Let $\Phi(x):=\phi(x)-A(x)$. From the $\phi$-adic development of $f(x)$ we get the $\Phi$-expansion
\[
f(x)=\sum\nolimits_{0\le s}a_s\phi^s=\sum\nolimits_{0\le s}a_s(\Phi+A)^s=\sum\nolimits_{0\le s}b_s\Phi^s,
\]
where
\begin{align*}
 b_0&=a_0+a_1A+\cdots+a_sA^s+\cdots+a_nA^n,\\
 b_1&=a_1+2a_2A+\cdots+sa_sA^{s-1}+\cdots,\\
    \vdots\\
 b_k&=a_k+(k+1)a_{k+1}A+\cdots+\comb{s}{k}a_sA^{s-k}+\cdots.
\end{align*}
We shall see along the proof of the theorem that each of the conditions (1), (2), and (3) implies that
%\begin{equation}\label{initial}
$$w(A)\ge V+h,  $$
%\end{equation}
which in turn implies $w(\Phi)=V$.
For all $s\ge k\ge0$ we obtain the lower bound
%\begin{equation}\label{estimation}
$$ w\left(\comb{s}{k}a_sA^{s-k}\Phi^k\right)\ge w(a_s\phi^s)+(s-k)h\ge w(f)+(s-k)h,$$
%\end{equation
from which we deduce:
\begin{equation}\label{bounds}
\begin{array}{ll}
w(a_2A^2+\cdots+ a_nA^n)\ge w(f)+2h,&(s\ge 2, k=0)\\
w\left((2a_2A+\cdots+n a_nA^{n-1})\Phi\right)\ge w(f)+h,&(s>k=1)\\
w(b_1\Phi)=w(a_1\Phi)=w(a_1\phi)=w(f),&\\
w(b_k\Phi^k)\ge w(f),\ \forall\,k>1,&(s\ge k>1)
\end{array}
\end{equation}
We consider the canonical $\Phi$-adic developments:
$$
\begin{array}{l}
a_0+a_1A=d_0+d_1\Phi,\\
a_2A^2+\cdots +a_nA^n=u_0+u_1\Phi+\cdots +u_s\Phi^s+\cdots,\\
(2a_2A+\cdots+n a_nA^{n-1})\Phi=g_1\Phi+g_2\Phi^2+\cdots+g_s\Phi^s+\cdots.
\end{array}
$$
The bounds (\ref{bounds}) and Lemma \ref{wv} (2) show that:
$$
\begin{array}{l}
w(f)+h\le w\left(a_0+a_1A\right)=\min\{w(d_0),\,w\left(d_1\Phi\right)\},\\
w(f)+2h\le w\left(a_2A^2+\cdots a_nA^n\right)=\min_{0\le s}\{w\left(u_s\Phi^s\right)\},\\
w(f)+h\le w\left((2a_2A+\cdots+n a_nA^{n-1})\Phi\right)=\min_{1\le s}\{w\left(g_s\Phi^s\right)\}.
\end{array}
$$
Hence,
\begin{equation}\label{bounds2}
\begin{array}{l}
w(d_1)\ge w(f)+h-V,\\
w(u_0)\ge w(f)+2h,\quad w(u_1)\ge w(f)+2h-V,\\
w(g_1)\ge w(f)+h-V.
\end{array}
\end{equation}
We now prove that condition (1) implies condition (2). From
$$
v(a_0(\alpha))\ge (w(f)+h)/e, \quad v\left(a_0(\alpha)+a_1(\alpha)A(\alpha)\right)\ge (w(f)+2h)/e,
$$
we deduce $v\left(a_1(\alpha)A(\alpha)\right)\ge (w(f)+h)/e$. By (\ref{vai}), we get
$$
v\left(A(\alpha)\right)\ge(w(f)+h)/e-(w(f)-V)/e=(V+h)/e.
$$
Since $\deg A<m$, Lemma \ref{wv} shows that $w(A)\ge V+h$, so that all bounds (\ref{bounds}), (\ref{bounds2}) hold. Also, $v\left(A(\t)\right)\ge(V+h)/e$. By  the theorem of the polygon, $v(\phi(\t))>V/e$. Hence, $v\left(\Phi(\t)\right)> V/e$, and $\Phi$ is a Montes approximation to $F$, by Lemma \ref{Mrepr}.

In particular, the shape of the Newton polygon $N_{\Phi,w}^-(f)$ is analogous to the shape of
$N_{\phi,w}^-(f)$ (see Figure \ref{fignewton}).
Thus, condition (2) holds if and only if $w(c_0)\ge w(f)+2h$, where $c_0$ is the $0$-th coefficient of the $\Phi$-adic development of $f(x)$.
Now, the coefficient $c_0$ is the $0$-th coefficient of the $\Phi$-adic development of $b_0$.
We can express this coefficient as: $c_0=d_0+u_0$. By (\ref{bounds2}), it is sufficient to check that $w(d_0)\ge w(f)+2h$.

From (\ref{bounds2}) we also have: $v\left(d_1(\alpha)\right)\ge(w(f)+h-V)/e$. Since $v\left(\Phi(\alpha)\right)=v\left(A(\alpha)\right)\ge(V+h)/e$, we get,
$v\left(d_1(\alpha)\Phi(\alpha)\right)\ge(w(f)+2h)/e$. Therefore, $v(d_0(\alpha))\ge (w(f)+2h)/e$.
By Lemma \ref{wv}, this implies $w(d_0)\ge w(f)+2h$, and condition (2) holds.

Suppose now that condition (2) holds. By Lemma \ref{eqwphi},
  $w(\Phi)=V$ and:
$$
\begin{array}{l}
v\left(\Phi(\t)\right)=(V+h_\Phi)/e\ge (V+2h)/e,\\
v\left(\phi(\t)\right)=(V+h_\phi)/e\ge (V+h)/e.
\end{array}
$$
Hence, $v\left(A(\t)\right)\ge(V+h)/e$, and since $\deg A<m$, we have $w(A)\ge V+h$, by Lemma \ref{wv}. Thus, all bounds (\ref{bounds}), (\ref{bounds2}) hold. Let $c_0=d_0+u_0$ be, as above, the $0$-th coefficient of the $\Phi$-adic development of $f(x)$. By hypothesis, $w(c_0)\ge w(f)+2h$, and by  (\ref{bounds2}), $w(u_0)\ge w(f)+2h$; hence, $w(d_0)\ge w(f)+2h$, so that $v(d_0(\t))\ge(w(f)+2h)/e$. On the other hand, by  (\ref{bounds2}) we have also $v(d_1(\t))\ge (w(f)+h-V)/e$, so that
$$
v\left(d_1(\t)\Phi(\t)\right)\ge (w(f)+h-V)/e + (V+2h)/e=(w(f)+3h)/e.
$$
Hence, $v\left(a_0(\t)+a_1(\t)A(\t)\right)=v\left(d_0(\t)+d_1(\t)\Phi(\t)\right)\ge(w(f)+2h)/e$, and condition (3) holds.

Finally, if we exchange the roles of $\alpha$ and $\t$
(i.e. exchange the roles of $\phi$ and $F$), the above arguments also show that condition (3) implies condition (1).
\end{proof}

Along the proof of the theorem we got some precise information about the coefficient $c_1$ of the canonical $\Phi$-development of $f(x)$.

\begin{corollary}\label{ac}
The coefficient $c_1\in\zpx$ of the canonical $\Phi$-adic development of $f(x)$ sa\-tisfies:
$w(c_1-a_1)\ge w(a_1)+h$.
\end{corollary}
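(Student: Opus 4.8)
The plan is to read $c_1$ off the bookkeeping already assembled in the proof of Theorem \ref{main} and then bound it using the estimates (\ref{bounds2}). Recall that, with $\phi=\Phi+A$, substituting into $f=\sum_{s\ge 0}a_s\phi^s$ produced $f=\sum_{k\ge 0}b_k\Phi^k$ with $b_k=\sum_{s\ge k}\comb{s}{k}a_sA^{s-k}$; in particular $b_0=a_0+a_1A+(a_2A^2+\cdots+a_nA^n)$ and $b_1=a_1+(2a_2A+\cdots+na_nA^{n-1})$. These $b_k$ need not have degree less than $m$, so to obtain the canonical $\Phi$-adic development $f=\sum_j c_j\Phi^j$ one must replace each $b_k$ by its own canonical $\Phi$-adic expansion $b_k=\sum_i b_{k,i}\Phi^i$ (with $\deg b_{k,i}<m$) and collect: $c_j=\sum_{0\le k\le j}b_{k,j-k}$; each partial sum has degree less than $m$, so no further carrying occurs. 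In particular $c_1=b_{0,1}+b_{1,0}$.

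First I would identify $b_{0,1}$. Using the canonical developments introduced in the proof, $a_0+a_1A=d_0+d_1\Phi$ and $a_2A^2+\cdots+a_nA^n=u_0+u_1\Phi+\cdots$, so $b_0=(d_0+u_0)+(d_1+u_1)\Phi+u_2\Phi^2+\cdots$; since $d_0,u_0,d_1,u_1$ all have degree less than $m$ this is already canonical, whence $b_{0,1}=d_1+u_1$. Next I would identify $b_{1,0}$. From $(2a_2A+\cdots+na_nA^{n-1})\Phi=g_1\Phi+g_2\Phi^2+\cdots$ one gets $2a_2A+\cdots+na_nA^{n-1}=g_1+g_2\Phi+\cdots$ (canonical, as the $g_i$ have degree less than $m$), so $b_1=(a_1+g_1)+g_2\Phi+\cdots$, again canonical because $\deg(a_1+g_1)<m$; hence $b_{1,0}=a_1+g_1$. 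Combining,
\[
c_1-a_1=d_1+u_1+g_1.
\]

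Finally I would invoke (\ref{bounds2}), namely $w(d_1)\ge w(f)+h-V$, $w(u_1)\ge w(f)+2h-V$, $w(g_1)\ge w(f)+h-V$, together with the identity $w(a_1)=w(f)-V$ recorded at the start of the proof of Theorem \ref{main}. Since $w$ is a valuation, $w(c_1-a_1)\ge\min\{w(d_1),w(u_1),w(g_1)\}\ge w(f)+h-V=w(a_1)+h$, which is the asserted inequality. There is no real obstacle beyond the combinatorial bookkeeping; the only point requiring a little care is that each $b_k$ must be reduced modulo $\Phi$ before its coefficients are extracted, and that the sums $d_1+u_1$ and $a_1+g_1$ genuinely stay of degree less than $m$ so that the canonical coefficient $c_1$ really equals $d_1+u_1+a_1+g_1$.
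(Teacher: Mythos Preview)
Your argument is correct and follows exactly the paper's approach: you arrive at the identity $c_1=d_1+u_1+a_1+g_1$ and then apply the bounds in (\ref{bounds2}), which is precisely what the paper does. The only difference is that the paper simply asserts ``Clearly, $c_1=d_1+u_1+a_1+g_1$'', whereas you carefully justify this by tracking how the canonical $\Phi$-adic coefficients of $b_0$ and $b_1$ are assembled from $d_0,d_1,u_i,g_i$.
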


\begin{proof}
Clearly, $c_1=d_1+u_1+a_1+g_1$, and by (\ref{bounds2}), the three elements $d_1$, $u_1$, $g_1$ have $w$-value greater than or equal to $w(f)+h-V=w(a_1)+h$.
\end{proof}

Furthermore $a_0(x)a_1^{-1}(x)$ has integral coefficients:

\begin{corollary}\label{polynomial}
The quotient $a_0(\alpha)/a_1(\alpha)$ belongs to the order $\oo[\alpha]\subseteq \oo_M$.
\end{corollary}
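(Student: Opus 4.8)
The plan is to bound the valuation of $\gamma:=a_0(\alpha)/a_1(\alpha)\in M$ from below and compare that bound with the exponent of $F$. First I would recall the valuations already computed inside the proof of Theorem~\ref{main} (see~(\ref{vai})): since $\deg a_0,\deg a_1<m$, Lemma~\ref{wv}~(1) gives $v(a_0(\alpha))=w(a_0)/e$ and $v(a_1(\alpha))=w(a_1)/e$, while the shape of the one-sided polygon $N_{\phi,w}^-(f)$ forces $w(a_0)\ge w(f)+h_\phi$ and $w(a_1)=w(f)-V$. Hence
\[
v(\gamma)=\frac{w(a_0)-w(a_1)}{e}\ \ge\ \frac{V+h_\phi}{e}\ =\ \frac{V}{e}+\frac{h_\phi}{e}.
\]

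Next I would bring in the exponent. The polynomial $\phi$ is a Montes approximation to $F$, hence irreducible over $\oo$ and sharing all the Okutsu invariants of $F$; in particular $\exp(\phi)=\exp(F)$, and by Theorem~\ref{theoexp} this common value is $\lfloor\mu_F\rfloor$ with $\mu_F=V/e-\nu_0$ and $\nu_0\ge 0$. Since $h_\phi>0$ and $\nu_0\ge 0$,
\[
v(\gamma)\ \ge\ \frac{V}{e}+\frac{h_\phi}{e}\ >\ \frac{V}{e}-\nu_0\ =\ \mu_F\ \ge\ \lfloor\mu_F\rfloor\ =\ \exp(F)=\exp(\phi).
\]
Therefore $v(\gamma)\ge\exp(\phi)$, which means $\gamma\in\pi^{\exp(\phi)}\oo_M$; and, by the very definition of the exponent of $\phi$, $\pi^{\exp(\phi)}\oo_M\subseteq\oo[\alpha]$. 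Thus $\gamma\in\oo[\alpha]$, which is the assertion.

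The step that deserves the most care --- and the reason the trivial bound $v(\gamma)\ge 0$ does not suffice --- is the passage from ``$\gamma\in\oo_M$'' to ``$\gamma\in\oo[\alpha]$'': the order $\oo[\alpha]$ may be strictly smaller than $\oo_M$, and an element of $M$ of non-negative valuation which is a polynomial in $\alpha$ of degree less than $m$ need not lie in $\oo[\alpha]$ (it only lies in $\oo_M$, which is all that ``integrality on the conjugates of $\alpha$'' buys). What rescues the argument is exactly the slack $h_\phi/e+\nu_0>0$ in the inequality above, which pushes $v(\gamma)$ strictly past $\exp(\phi)$ and hence lands $\gamma$ inside $\pi^{\exp(\phi)}\oo_M\subseteq\oo[\alpha]$. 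Everything else is the routine valuation bookkeeping already carried out for Theorem~\ref{main}.
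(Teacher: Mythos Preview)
Your argument is correct, and it is in fact more direct than the route the paper takes. The paper does not bound $v(a_0(\alpha)/a_1(\alpha))$ directly; instead it invokes Theorem~\ref{main} with the particular choice $A(x)=\phi(x)-F(x)$ (which trivially satisfies condition~(2), since then $\Phi=F$ and $h_\Phi=\infty$), obtaining $v\big(a_0(\alpha)+a_1(\alpha)A(\alpha)\big)\ge (w(f)+2h_\phi)/e$. Dividing by $a_1(\alpha)$ and setting $\rho=\gamma+A(\alpha)$, one gets $v(\rho)\ge (V+2h_\phi)/e>\exp(\phi)$, so $\rho\in\oo[\alpha]$; since $A(\alpha)\in\oo[\alpha]$ (because $A\in\oo[x]$), the conclusion $\gamma\in\oo[\alpha]$ follows by subtraction.

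Your approach bypasses Theorem~\ref{main} entirely: you read off $w(a_0)\ge w(f)+h_\phi$ and $w(a_1)=w(f)-V$ straight from the shape of $N^-_{\phi,w}(f)$, obtain $v(\gamma)\ge (V+h_\phi)/e$, and compare with $\mu_F=V/e-\nu_0$ via Theorem~\ref{theoexp}. This is shorter and uses only the elementary valuation bookkeeping of~(\ref{vai}), at the price of a bound on $v(\gamma)$ that is weaker by $h_\phi/e$ than what the paper's detour through Theorem~\ref{main} would give for $v(\rho)$ --- but that extra strength is not needed here. The paper's version has the mild expository advantage of illustrating Theorem~\ref{main} with a concrete (and extreme) choice of $A$; yours has the advantage of showing that the corollary is independent of that theorem.
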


\begin{proof}
The choice $A(x)=\phi(x)-F(x)$ obviously sa\-tisfies Theorem \ref{main} (2). Hence,
$$
v\left(a_0(\alpha)+a_1(\alpha)A(\alpha)\right)\ge (w(f)+2h)/e.
$$
Take $\rho:=(a_0(\alpha)/a_1(\alpha))+A(\alpha)$. By (\ref{vai}), $v(\rho)\ge (V+2h)/e$. Theorem \ref{theoexp} shows that $v(\rho)>\exp(\phi)$, so that $\rho$ belongs to $\oo[\alpha]$, and $a_0(\alpha)/a_1(\alpha)$ belongs to $\oo[\alpha]$ too.
\end{proof}

%%%%%%%%%%%%%%%%%%%%%%%%%%%%%%%%%%%%%%%%%%%%%%%%%%%%%%%%%%%%%%%%%%%%%%%%%%%%%%%

\subsection{Newton inversion modulo a power of the maximal ideal}\label{subsecInversion}
As a consequence of Theorem \ref{main}, every iteration of the single-factor lifting algorithm should efficiently compute
a polynomial $A(x)\in\zpx$, such that $\deg A<m$ and
$$
v\left(a_0(\alpha)+a_1(\alpha)A(\alpha)\right)\ge (w(f)+2h)/e,
$$
where $\alpha$ is a root of $\phi(x)$.
By the argument given in the proof of Corollary \ref{polynomial},
this polynomial $A(x)$ always exists.
A possible solution would be to invert the element $a_1(\alpha)$ in the field $M=K(\alpha)$
and consider the polynomial $A(x)$ such that $A(\alpha)=-a_0(\alpha)/a_1(\alpha)$.
However, for polynomials of large degree, or having large coefficients,
the application of an extended GCD algorithm usually leads to an explosion of coefficients.

Instead, we shall compute an approximation to $-a_0(\alpha)/a_1(\alpha)$
modulo a sufficiently high power of $\m_M$, by applying classical Newton lifting.

By Lemma \ref{lemPsi} we can construct a polynomial $\Psi(x)\in K[x]$ of degree less than $m$ with $w(\Psi)=-w(a_1)=-w(f)+V$.
If we set
$$
A_0(x):=a_0(x)\Psi(x)\bmod \phi(x), \quad A_1(x)=a_1(x)\Psi(x)\bmod\phi(x),
$$
then $v(A_1(\alpha))=0$ and
\[
v(A_0(\alpha))\ge \frac{w(f)+h}{e}+\frac{-w(f)+V}{e}=\frac{V+h}{e}.
\]
For any polynomial $A(x)\in\zpx$, the following conditions are clearly equivalent:
\begin{enumerate}
\item $v\left(a_0(\alpha)+a_1(\alpha)A(\alpha)\right)\ge (w(f)+2h)/e$,
\item $v\left(A_0(\alpha)+A_1(\alpha) A(\alpha)\right)\ge (w(f)+2h)/e+v(\Psi(\alpha))=(V+2h)/e$,
\item $A(\alpha)\equiv -A_0(\alpha)/A_1(\alpha) \md{(\m_M)^{V+2h}}$.
\end{enumerate}
Because $v(A_0(\alpha))\ge (V+h)/e$,
it is sufficient to find an element $A_1^{-1}(\alpha)\in M$ with $A_1^{-1}(\alpha)A_1(\alpha)\equiv 1\mod (\m_M)^{h}$
and then take $A(x)\in K[x]$ to be the unique polynomial of degree less than $m$
satisfying $A(\alpha)=-A_0(\alpha)A_1^{-1}(\alpha)$. By Theorem \ref{theoexp}, we get
$$
v(A(\alpha))=v(A_0(\alpha)A_1^{-1}(\alpha))=v(A_0(\alpha))\ge (V+h)/e>\exp(F)=\exp(\phi),
$$
so that $A(x)\in\zpx$.

We compute the  approximation $A_1^{-1}(\alpha)$ to $A_1(\alpha)^{-1}$ by the classical Newton iteration:
$$
x_{k+1}=x_k(2-A_1(\alpha)x_k),
$$
starting with a lift $x_0\in\oo_M$ of the inverse of $\overline{A_1(\alpha)}$ in the residue field $\oo_M/\m_M$.
Note that if $\op{char}(K)=2$, the iteraton amounts to $x_{k+1}=A_1(\alpha)(x_k)^2$.

This iteration method has quadratic convergence.  If $A_1(\alpha)x_k=1+z$ with $z\in (\m_M)^s$, then $z^2\in (\m_M)^{2s}$ and
$$
A_1(\alpha)x_{k+1}=
\left\{
\begin{array}{ll}
A_1(\alpha) x_k(2-A_1(\alpha)x_k)= (1+z)(1-z)=1-z^2,
&\mbox{ if }\op{char}(K)\ne 2,\\
A_1(\alpha)^2(x_k)^2=(1+z)^2=1+z^2,&\mbox{ if }\op{char}(K)= 2.
\end{array}
\right.
$$
Hence $x_k \equiv A_1(\alpha)^{-1} \mod (\m_M)^{2k}$, which implies that the computation of $A(x)$ requires only $\log_2(h)$ iterations.
Each iteration has a cost of two multiplications (and one addition) in the field $M$.

\subsubsection{Computation of the initial lift}\label{subsecInversionIn}
The efficient computation of an initial lift $x_0\in\oo_M$ of the inverse of $\overline{A_1(\alpha)}$ in $\oo_M/\m_M$ is a non-trivial matter.
Let us explain how to obtain $x_0$ without inverting elements in $M$. Recall the isomorphism
$\gamma\colon \ff{r+1}\lra \oo_M/\m_M$ described in (\ref{gamma}).
As one of the ingredients of a CRT routine on global fields, based on the OM-representations of prime ideals, in \cite[Sec.4.2]{newapp} we described how to compute a section of the reduction mapping:
$$
\oo_M\lra \oo_M/\m_M\stackrel{\gamma^{-1}}\lra \ff{r+1},
$$
For the ease of the reader, we reproduce this description.

Suppose $A_1(x)=g(x)/\pi^\mu$, for some $g(x)\in\oo[x]$. Since $\deg g<m$, the type $\ty$ does not divide $g$: $\ord_\ty(g)=0$. Hence, by \cite[Prop.3.5]{HN} (see also \cite[Prop.2.1]{newapp}), the computation of the residual polynomial of $g(x)$ of $r$-th order yields an identity:
\begin{equation}\label{rp}
\overline{\dfrac{g(\alpha)}{\Phi_r(\alpha)^s\pi_r(\alpha)^u}}=\gamma\left(R_r(g)(z_r)\right)\in\left(\oo_M/\m_M\right)^*,
\end{equation}
where the exponents $s,u$ can be read in $N_r^-(g)$, and $\Phi_r(x),\,\pi_r(x)\in K(x)$ are rational fractions constructed in \cite[Sec.2.4]{HN}, that can be expressed as a products of powers of $\pi,\phi_1,\dots,\phi_r$ with integer exponents:
$$
\Phi_r(x)=\pi^{j_0}\phi_1(x)^{j_1}\cdots \phi_r(x)^{j_r},\quad
\pi_r(x)=\pi^{j'_0}\phi_1(x)^{j'_1}\cdots \phi_r(x)^{j'_r},\quad j_k,\,j'_k\in\Z.
$$ These exponents $j_k,j'_k$ are computed and stored by Montes algorithm. From (\ref{rp}) we deduce:
\begin{align*}
\xi:=\gamma^{-1}(\overline{A_1(\alpha)})&=\gamma^{-1}\left(\overline{g(\alpha)\pi^{-\mu}}\right)=R_r(g)(z_r)
\gamma^{-1}\left(\overline{\pi^\mu\Phi_r(\alpha)^{-s}\pi_r(\alpha)^{-u}}\right)\\&=R_r(g)(z_r)z_1^{t_1}\cdots z_r^{t_r}\in\ff{r+1}^*,
\end{align*}
for some easily computable exponents $t_1,\dots,t_r$ \cite[Lem.1.4]{newapp}. The same lemma may be applied to find integers $t'_1,\dots,t'_r$ such that
$$
\xi':=\gamma^{-1}\left(\overline{\pi_{r+1}(\alpha)^{e\mu}\pi^{-\mu}}\right)=z_1^{t'_1}\cdots z_r^{t'_r}\in\ff{r+1}^*.
$$
Let $\varphi(y)\in\ff{r}[y]$ be the unique polynomial of degree less than $f_r$, such that
$\varphi(z_r)=z_r^{\ell_r e\mu/e_r}(\xi\xi')^{-1}$, and let $\nu:=\ord_y\varphi(y)$. The integer $\ell_r$ satisfies $\ell_rh_r\equiv1\bmod e_r$, and it is also stored by the type $\ty$. The constructive method described in \cite[Prop.2.10]{HN} allows one to compute a polynomial $h(x)\in\oo[x]$ sa\-tisfying the following properties:
$$\deg h(x)<m,\qquad w(h)=e\mu, \qquad y^{\nu}R_r(h)(y)=\varphi(y).
$$
This polynomial satisfies what we want: $\overline{h(\alpha)\pi^{-\mu}}=\gamma(\xi)^{-1}=\overline{A_1(\alpha)}^{-1}$ (cf. loc.cit.). Thus, we may take $x_0=h(\alpha)\pi^{-\mu}$.
%%%%%%%%%%%%%%%%%%%%%%%%%%%%%%%%%%%%%%%%%%%%%%%%%%%%%%%%%%%%%%%%

\subsection{The main loop}
We are ready to give a detailed description of the iteration steps. Let us recall the preliminary computations before entering into the iteration of the main loop. Suppose $\phi$ is the input Montes approximation to $F$, $\alpha\in \ks$ is a rot of $\phi$, and $M=K(\alpha)$. We compute the first two coefficients $a_0(x),a_1(x)$ of the $\phi$-adic development of $f(x)$, their $w$-value $w(a_0),\,w(a_1)$, and the slope of the $(r+1)$-th order Newton polygon of $f(x)$: $h_\phi=w(a_0)-w(a_1)-V$. Next, we apply the algorithm described in Lemma \ref{lemPsi} to compute the universal polynomial $\Psi(x)\in K[x]$ of degree less than $m$, such that $w(\Psi)=-w(a_1)$. We compute then the polynomials
$$
A_0(x):=a_0(x)\Psi(x) \bmod\phi(x), \quad
A_1(x):=a_1(x)\Psi(x) \bmod\phi(x).
$$
We apply the lifting routine that we just described in the last subsection, to obtain a polynomial $P(x)\in K[x]$, of degree less than $m$, such that  $P(\alpha)A_1(\alpha)\equiv 1\md{(\m_M)^{h_\phi}}$.

Actually, these preliminary computations may be considered the first iteration step. In fact, the next Montes approximation is determined already by:
$$
\Phi:=\phi-A,\quad \mbox{ for } A(x):=-A_0(x)P(x)\bmod\phi.
$$

After the first step, we enter into a general loop. Let $\Phi$ be the $i$-th Montes approximation
to $F$ computed so far, so that $h_\Phi\ge 2h_\phi$, where $\phi$ is the $(i-1)$-th Montes approximation. Let $A:=\phi-\Phi$, $\beta\in\ks$ a root of $\Phi$, $N=K(\beta)$, $\alpha\in\ks$ a root of $\phi$, and $M=K(\alpha)$.\medskip

\noindent{\bf 1. }Compute the first two terms $c_0(x)$, $c_1(x)$,
of the $\Phi$-adic development of $f(x)$. \medskip

\noindent{\bf 2. }$C_0(x):=c_0(x)\Psi(x)\bmod\Phi(x),\quad C_1(x):=c_1(x)\Psi(x)\bmod\Phi(x)$.\medskip

By Corollary \ref{ac} we have $w(c_1)=w(a_1)$; thus, $w(C_1)=0$, or equivalently $v(C_1(\beta))=0$, by Lemma \ref{wv}.
We need now a polynomial $Q(x)\in K[x]$ such that
$$Q(\beta)C_1(\beta)\equiv 1\md{(\m_N)^{h_\Phi}}.
$$
Let $P(x)\in K[x]$ be the analogous polynomial that we used in the previous iteration; with the above notation, $P(x)=A_1^{-1}(x)$ satisfied: $P(\alpha)A_1(\alpha)\equiv 1\md{(\m_M)^{h_\phi}}$. To compute $Q(x)$ we apply a (single!) step of the classical Newton iteration, with $C_1$ replacing $A_1$:  \medskip

\noindent{\bf 3. }$Q(x):=
P(x)(2-C_1(x)P(x))\bmod\Phi(x)$\medskip

Proposition \ref{empalma} below shows that $P(\beta)$ is also an approximation to $C_1(\beta)^{-1}$ modulo $(\m_N)^{h_\phi}$. Thus, $Q(\beta)$ is indeed an approximation to  $C_1(\beta)^{-1}$ with double precision, as required. Finally, we get the next Montes approximation as usual:\medskip

\noindent{\bf 4. }$\Phi':=\Phi-C,\quad \mbox{ for } C(x):=-C_0(x)Q(x)\bmod\Phi$.\medskip

The proof of Proposition \ref{empalma} is based on the following lemma.

\begin{lemma}\label{convert}
With the above notation, let $g(x)\in K[x]$ be a polynomial satisfying $w(g)\ge0$ and $v(g(\alpha))\ge h/e$. Then, $v(g(\beta))\ge h/e$.
\end{lemma}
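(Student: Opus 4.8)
The statement to prove is Lemma~\ref{convert}: if $g(x)\in K[x]$ satisfies $w(g)\ge 0$ and $v(g(\alpha))\ge h/e$, then $v(g(\beta))\ge h/e$, where $\alpha$ is a root of $\phi$, $\beta$ is a root of the better approximation $\Phi=\phi-A$, and $A$ has degree less than $m$ with (as established in the proof of Theorem~\ref{main}) $w(A)\ge V+h$.

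\medskip

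\textbf{Approach.} The plan is to reduce to the $\phi$-adic (equivalently $\Phi$-adic) development of $g$ and track valuations coefficient by coefficient. First I would replace $g$ by its remainder considerations: write the canonical $\Phi$-adic development $g(x)=\sum_{0\le s}g_s(x)\Phi(x)^s$ with $\deg g_s<m$. By Lemma~\ref{wv}(2) applied to the pair $(\Phi,w)$ (valid since $\Phi$ is a Montes approximation to $F$, as shown in Theorem~\ref{main}), we have $w(g)=\min_{s}\{w(g_s\Phi^s)\}$, so $w(g)\ge 0$ forces $w(g_s\Phi^s)\ge 0$ for every $s$, hence $w(g_s)\ge -sV$ since $w(\Phi)=V$. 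Now evaluate at $\beta$: since $\Phi(\beta)=0$, $g(\beta)=g_0(\beta)$, and as $\deg g_0<m$, Lemma~\ref{wv}(1) gives $v(g_0(\beta))=v(g_0(\t))=w(g_0)/e$. So the target is exactly $w(g_0)\ge h$. Symmetrically, I would relate things at $\alpha$: since $\Phi=\phi-A$, the $\phi$-adic development of $g$ is obtained from the $\Phi$-adic one by substituting $\Phi=\phi-A$, and the key point is that the hypothesis $v(g(\alpha))\ge h/e$ together with $\deg$-bounds controls the relevant $\phi$-adic coefficient of $g$.

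\medskip

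\textbf{Key steps in order.} (1) Record that $\Phi$ is a Montes approximation to $F$ and $w(A)\ge V+h$, $w(\Phi)=V$ (from Theorem~\ref{main}). (2) Take the canonical $\Phi$-adic development $g=\sum_s g_s\Phi^s$; from $w(g)\ge 0$ and Lemma~\ref{wv}(2) deduce $w(g_s)\ge -sV$ for all $s$. (3) Since $\beta$ is a root of $\Phi$, $v(g(\beta))=v(g_0(\beta))=w(g_0)/e$ by Lemma~\ref{wv}(1); so it suffices to show $w(g_0)\ge h$. (4) Evaluate the same development at $\alpha$: $g(\alpha)=\sum_s g_s(\alpha)\Phi(\alpha)^s$. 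By Lemma~\ref{wv}(1) each $v(g_s(\alpha))=w(g_s)/e$, and $v(\Phi(\alpha))=v(A(\alpha))\ge (V+h)/e$ (using $\Phi(\alpha)=-A(\alpha)$ and $w(A)\ge V+h$ with $\deg A<m$). Hence for $s\ge 1$, $v(g_s(\alpha)\Phi(\alpha)^s)\ge w(g_s)/e + s(V+h)/e\ge (-sV + sV + sh)/e = sh/e\ge h/e$. (5) Therefore $v(g_0(\alpha)) = v\bigl(g(\alpha) - \sum_{s\ge1} g_s(\alpha)\Phi(\alpha)^s\bigr)\ge \min\{h/e,\, h/e\} = h/e$ using the hypothesis $v(g(\alpha))\ge h/e$, so $w(g_0)=e\,v(g_0(\alpha))\ge h$ by Lemma~\ref{wv}(1) again. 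Combining with step (3), $v(g(\beta))=w(g_0)/e\ge h/e$, as desired.

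\medskip

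\textbf{Main obstacle.} The one point that requires care is the legitimacy of applying Lemma~\ref{wv}(1) to the coefficients $g_s$ at both roots $\alpha$ and $\beta$: this needs $\deg g_s<m=\deg F=\deg\phi=\deg\Phi$, which holds by definition of the canonical $\Phi$-adic development, and it needs $\phi$ and $\Phi$ to both be Montes approximations to $F$ so that $w(P(\gamma))=w(P)/e=v(P(\t))$ for any root $\gamma$ of either — this is exactly the content of Lemma~\ref{wv}(1) applied once to $(\phi,\alpha)$ and once to $(\Phi,\beta)$. The other delicate bookkeeping is making sure the bound $w(\Phi)=V$ really holds for the valuation $w=v_{r+1}$ (not just for $v_{\phi,w}$); this is guaranteed because $w(A)\ge V+h>V=w(\phi)$, so $w(\Phi)=w(\phi-A)=w(\phi)=V$. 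Once these facts are in place the valuation inequalities in steps (4)–(5) are routine, and the whole argument is essentially a symmetric exchange-of-roots computation like the one ending the proof of Theorem~\ref{main}.
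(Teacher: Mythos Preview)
Your proof is correct and uses essentially the same idea as the paper: expand $g$ adically, use $w(A)\ge V+h$ together with Lemma~\ref{wv} to bound each term, and transfer the valuation between the two roots. The only difference is a mirror symmetry: the paper takes the $\phi$-adic development $g=\sum_s q_s\phi^s$, so that $q_0(\alpha)=g(\alpha)$ immediately gives $v(q_0(\beta))=v(q_0(\alpha))\ge h/e$, and then uses $\phi(\beta)=A(\beta)$ to bound the remaining terms $v(q_s(\beta)A(\beta)^s)\ge sh/e$; you instead take the $\Phi$-adic development and work in the opposite direction, which costs you the extra subtraction in step~(5). Both arguments are equally valid, though the paper's choice avoids that small detour.
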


\begin{proof}
Let $g(x)=\sum_{0\le s}q_s(x)\phi(x)^s$ be the $\phi$-adic development of $g(x)$. By Lemma \ref{wv} (2), $w(q_s\phi^s)\ge w(g)\ge0$, for all $s\ge0$. Since $w(A)\ge V+h$, we get $w(q_sA^s)\ge sh$, for all $s\ge0$.

Since $g(\beta)=\sum_{0\le s}q_s(\beta)A(\beta)^s$, Lemma \ref{wv} (1) shows that
$$
v(q_s(\beta)A(\beta)^s)\ge sh/e, \ \forall\,s>0,\quad v(q_0(\beta))=v(q_0(\alpha))=v(g(\alpha))\ge h/e.
$$
This implies $v(g(\beta))\ge h/e$.
\end{proof}

\begin{proposition}\label{empalma}
With the above notation, let $P(x)\in K[x]$ be a polynomial of degree less than $m$ such that $P(\alpha)A_1(\alpha)\equiv1\md{(\m_M)^h}$. Then,
$P(\beta)C_1(\beta)\equiv 1\md{(\m_N)^{h}}$.
\end{proposition}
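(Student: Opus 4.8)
The plan is to pivot through $P(\beta)A_1(\beta)$: write
\[
P(\beta)C_1(\beta)-1=\bigl(P(\beta)A_1(\beta)-1\bigr)+P(\beta)\bigl(C_1(\beta)-A_1(\beta)\bigr),
\]
and show that each of the two summands has $v$-value at least $h/e$. Since $e(N/K)=e(L/K)=e$, an element of $N$ of $v$-value $\ge h/e$ lies in $(\m_N)^{h}$, so this gives exactly the asserted congruence $P(\beta)C_1(\beta)\equiv 1\md{(\m_N)^{h}}$. Throughout I would use that all the polynomials in sight ($P$, $A_1$, $C_1$, $a_1$, $c_1$, $\Psi$, $A$) have degree $<m$, so that Lemma \ref{wv}(1) lets me pass freely between $w$-values and $v$-values evaluated at $\alpha$ or at $\beta$; and that $w(A)\ge V+h$, which holds because $A=\phi-\Phi$ with $v(\phi(\t))=(V+h_\phi)/e\ge(V+h)/e$ and $v(\Phi(\t))=(V+h_\Phi)/e\ge(V+h)/e$.

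For the first summand I would apply Lemma \ref{convert} to $g(x):=P(x)A_1(x)-1$. Here $w(g)\ge 0$ since $w(A_1)=0$ (because $v(A_1(\alpha))=0$) and $w(P)=0$ (the hypothesis forces $v(P(\alpha)A_1(\alpha))=0$, hence $v(P(\alpha))=0$), while $v(g(\alpha))\ge h/e$ is exactly the hypothesis $P(\alpha)A_1(\alpha)\equiv 1\md{(\m_M)^{h}}$. Lemma \ref{convert} then yields $v(g(\beta))\ge h/e$, that is, $v\bigl(P(\beta)A_1(\beta)-1\bigr)\ge h/e$.

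The second summand is where the real work lies, precisely because $A_1$ and $C_1$ are reductions modulo the \emph{different} approximations $\phi$ and $\Phi$. Since $\beta$ is a root of $\Phi$, we have $C_1(\beta)=c_1(\beta)\Psi(\beta)$ exactly. For $A_1$ I would use the Euclidean division $a_1\Psi=q\phi+A_1$ with $\deg A_1<m$; evaluating at $\beta$ and using $\phi(\beta)=A(\beta)$ gives $A_1(\beta)=a_1(\beta)\Psi(\beta)-q(\beta)A(\beta)$, hence
\[
C_1(\beta)-A_1(\beta)=\bigl(c_1(\beta)-a_1(\beta)\bigr)\Psi(\beta)+q(\beta)A(\beta).
\]
For the first term, Corollary \ref{ac} gives $w(c_1-a_1)\ge w(a_1)+h$, and since $w(\Psi)=-w(a_1)$ this forces $v\bigl((c_1(\beta)-a_1(\beta))\Psi(\beta)\bigr)\ge h/e$. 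For the second term, comparing $w$-values in $a_1\Psi=q\phi+A_1$ (where $w(a_1\Psi)=0=w(A_1)$ and $w(\phi)=V$) shows $w(q)\ge -V$, so $v\bigl(q(\beta)A(\beta)\bigr)=(w(q)+w(A))/e\ge(-V+V+h)/e=h/e$. Thus $v\bigl(C_1(\beta)-A_1(\beta)\bigr)\ge h/e$, and since $v(P(\beta))=w(P)/e=0$ the whole second summand has $v$-value $\ge h/e$ as well. Adding the two estimates finishes the proof.

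The main obstacle is exactly the passage just described: one cannot simply substitute $\beta$ into the congruence $A_1\equiv a_1\Psi\md{\phi}$, since $\beta$ is a root of $\Phi$, not of $\phi$. The remedy is to keep the quotient $q$ explicit and to bound $w(q)\ge -V$ using $w(\phi)=V$ together with the normalization $w(\Psi)=-w(a_1)$; this is precisely what makes the error term $q(\beta)A(\beta)$ as small as needed, exploiting that $A=\phi-\Phi$ already has $w(A)\ge V+h$. Everything else reduces to routine applications of Lemma \ref{wv}, Corollary \ref{ac}, and Lemma \ref{convert}.
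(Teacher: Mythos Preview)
Your proof is correct and follows the same overall route as the paper: the same decomposition
\[
P(\beta)C_1(\beta)-1=\bigl(P(\beta)A_1(\beta)-1\bigr)+P(\beta)\bigl(C_1(\beta)-A_1(\beta)\bigr),
\]
the same appeal to Lemma~\ref{convert} for the first summand, and the same use of Corollary~\ref{ac} together with $w(\Psi)=-w(a_1)$ for the second.

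Where you differ is in the treatment of $C_1(\beta)-A_1(\beta)$, and here you are in fact more careful than the paper. The paper simply writes
\[
v(C_1(\beta)-A_1(\beta))=v(c_1(\beta)-a_1(\beta))+v(\Psi(\beta)),
\]
as though $C_1(\beta)-A_1(\beta)=(c_1(\beta)-a_1(\beta))\Psi(\beta)$. But, as you observe, $A_1$ is the reduction of $a_1\Psi$ modulo $\phi$, not modulo $\Phi$, so evaluating at $\beta$ leaves the extra term $q(\beta)\phi(\beta)=q(\beta)A(\beta)$ from the Euclidean division $a_1\Psi=q\phi+A_1$. Your explicit bound $w(q)\ge -V$ (obtained from $w(a_1\Psi)=0=w(A_1)$ and $w(\phi)=V$ via Lemma~\ref{wv}(2)), combined with $w(A)\ge V+h$, disposes of this term cleanly. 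This extra bookkeeping is exactly what is needed to justify the paper's displayed equality as an inequality $\ge h/e$; the paper glosses over it, and your argument fills that small gap.
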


\begin{proof}
Since $\deg P<\deg\phi$ we have $w(P)=v(P(\alpha))=0$ by Lemma \ref{wv}.
Also, $w(A_1)=0$ and $w(PA_1-1)\ge 0$.

If we apply Lemma \ref{convert} to the polynomial $g=PA_1-1$, we get $v(P(\beta)A_1(\beta)-1)\ge h/e$.
In particular, $v(P(\beta))=0$.

On the other hand, $w(c_1-a_1)\ge w(a_1)+h$, by Corollary \ref{ac}. Lemma \ref{wv},
shows that $v(c_1(\beta)-a_1(\beta))\ge v(a_1(\beta))+(h/e)$, so that
$$
v(C_1(\beta)-A_1(\beta))=v(c_1(\beta)-a_1(\beta))+v(\Psi(\beta))\ge h/e.
$$
Now, the identity $P(\beta)C_1(\beta)-1=P(\beta)(C_1(\beta)-A_1(\beta))+P(\beta)A_1(\beta)-1$, shows that $v(P(\beta)C_1(\beta)-1)\ge h/e$.
\end{proof}

\section{The Algorithm}\label{secAlgo}
Let $f(x)\in\zpx$ be a monic and separable polynomial, and $\ty$ an $f$-complete optimal type of order $r$, that corresponds to a monic irreducible factor $F(x)\in\zpx$ of $f(x)$. Let $\Phi(x)\in\oo[x]$ be a Montes approximation to $F(x)$. By Lemma \ref{lem45okutsu},
$$
F(x)\equiv \Phi(x)\md{\m^{\lceil\nu\rceil}},\quad\nu=\nu_0+(h_\Phi/e),
$$
where $\nu_0$ is given in (\ref{nu0}) and $e=e_1\dots e_r=e(L/K)$.
So, if $\nu$ is the precision to which we want to find $F$, it is sufficient to
find a Montes approximation $\Phi$ with $h_\Phi\ge e (\nu-\nu_0)$.

We summarize in an algorithm the methods developed in the previous section to achieve this end.
Recall that an initial Montes approximation $\phi(x)$ is always provided by Montes algorithm as an $(r+1)$-th $\phi$-polynomial: $\phi:=\phi_{r+1}$.
As before we set $w:=v_{r+1}$. The function ``quotrem'' returns the quotient and remainder of its parameters.

\Algo{Single-Factor Lifting}{algaccel}
{$f\in\oo[x]$ monic separable,
$\ty$ an $f$-complete optimal type corresponding to some monic irreducible factor  $F(x)\in\oo[x]$ of $f(x)$,
$\phi\in\oo[x]$ a representative of $\ty$,
$\nu\in\N$ a desired precision.}
{An irreducible polynomial $\Phi\in\oo[x]$ such that $\Phi\equiv F\bmod\m^\nu$}
{
\begin{enumerate}
\item[(1)] $a, a_0 \gets \quotrem(f,\phi)$, $a_1 \gets a \bmod \phi$
\item[(2)] $h_\phi\gets w(a_0)-w(a_1\phi)$
\item[(3)] Find $\Psi\in K[x]$ with $\deg\Psi<\deg\phi$ and $w(\Psi)=-w(a_1)$ (cf. Lemma \ref{lemPsi})
\item[(4)] $A_0\gets \Psi a_0\bmod\phi$, $A_1\gets\Psi a_1\bmod\phi$
\item[(5)] Find $A_1^{-1}\in K[x]$ with $w\left((A_1^{-1}A_1\bmod\phi)-1\right)>0$
(cf. Section \ref{subsecInversionIn})
\item[(6)] $s\gets 1$
\item[(7)] while $s<h_\phi$: \textbf{(Newton inversion)}
\begin{itemize}
\item[(a)] $A_1^{-1}\gets A_1^{-1}(2-A_1 A_1^{-1})\bmod\phi$
\item[(b)] $s\gets 2s$
\end{itemize}
\item[(8)] $A\gets A_0A_1^{-1}\bmod\phi$, $\Phi\gets\phi+A$, $C_1^{-1}\gets A_1^{-1}$
\item[(9)] $h\gets h_\phi$
\item[(10)] while $h<e(\nu -\nu_0)$: \textbf{(The main loop)}
\begin{itemize}
\item[(a)]  $c,c_0 \gets \quotrem(f,\Phi)$, $c_1 \gets c \bmod \Phi$
\item[(b)]  $C_0\gets \Psi c_0\bmod\Phi$, $C_1\gets\Psi c_1\bmod\Phi$
\item[(c)]  $C_1^{-1}\gets C_1^{-1}(2-C_1 C_1^{-1})\bmod\phi$
\item[(d)]  $C\gets C_0C_1^{-1} \bmod\Phi$
\item[(e)]  $\Phi\gets\Phi+C$
\item[(f)]  $h\gets 2h$
\end{itemize}
\item[(11)] return $\Phi$
\end{enumerate}
}

Note that the output is always an irreducible polynomial in $\oo[x]$, regardless of the quality of the prescribed precision $\nu$. Of course, if $\nu$ is too small, the output polynomial will not be necessarily irreducible modulo $\m^\nu$.

Algorithm \ref{algaccel} can be simplified by removing the Newton inversion loop.
Then the main loop is entered with $h=1\le h_\phi$ and the initial approximation $A_1^{-1}$ for $C_1^{-1}$ computed in step (5).
This avoids the computation of $w(a_0)$ in step (2)  but
comes with the additional cost of computing more remainders $c_0$ and $c_1$.
We get:

\Algo{Short Single-Factor Lifting} {algaccelshort}
{$f\in\oo[x]$ monic separable,
$\ty$ an $f$-complete optimal type corresponding to some monic irreducible factor  $F(x)\in\oo[x]$ of $f(x)$,
$\phi\in\oo[x]$ a representative of $\ty$,
$\nu\in\N$ a desired precision.}
{An irreducible polynomial $\Phi\in\oo[x]$ such that $\Phi\equiv F\bmod\m^\nu$}
{
\begin{enumerate}
\item[(1)] $a,a_0 \gets \quotrem(f,\phi)$, $a_1 \gets a \bmod \phi$
\item[(2)] Find $\Psi\in K[x]$ with $\deg\Psi<\deg\phi$ and $w(\Psi)=-w(a_1)$ (cf. Lemma \ref{lemPsi})
\item[(3)] $A_0\gets \Psi a_0\bmod\phi$, $A_1\gets\Psi a_1\bmod\phi$
\item[(4)] Find $C_1^{-1}\in K[x]$ with $w\left((C_1^{-1}A_1\bmod\phi)-1\right)>0$
(cf. Section \ref{subsecInversionIn})
\item[(5)]  $\phi\gets\phi+\left(A_0 C_1^{-1} \bmod\phi\right)$
\item[(6)] $h\gets 2$
\item[(7)] while $h<e(\nu -\nu_0)$: \textbf{(The main loop)}
\begin{itemize}
\item[(a)]  $c,c_0 \gets \quotrem(f,\phi)$, $c_1 \gets c \bmod \phi$
\item[(b)]  $C_0\gets \Psi c_0\bmod\phi$, $C_1\gets\Psi c_1\bmod\phi$
\item[(c)]  $C_1^{-1}\gets C_1^{-1}(2-C_1 C_1^{-1})\bmod\phi$
\item[(d)]  $C\gets C_0C_1^{-1} \bmod\phi$
\item[(e)]  $\phi\gets\phi+C$
\item[(f)]  $h\gets 2h$
\end{itemize}
\item[(8)] return $\phi$
\end{enumerate}
}

In the following we restrict our analysis to Algorithm \ref{algaccelshort}. In practice, Algorithm  \ref{algaccel} has a better average performance than Algorithm \ref{algaccelshort}.

\subsection{Precision}
The precision necessary to perform the computations in each step of the algorithm
is relevant for the complexity analysis and for efficiently implementing the algorithm.
It is most efficient to conduct each computation with a fixed precision, say $\mu$; that is, we truncate
the $\pi$-adic expansion of all elements in $\oo$ after the $\mu$-th $\pi$-adic digit.
This precision is increased in each iteration of the loop.

We analyze the precision needed in the main loop by going through the steps in reverse order.
By Theorem \ref{main}, Lemma \ref{eqwphi}) and Corollary \ref{polynomial}, the polynomial $C(x)$ computed in step (5d) has coefficients in $\oo$, and it is expected to satisfy:
\[
v(\phi(\t)+C(\t))\ge\frac{2h_\phi+V}{e}.
\]
Thus, in (5e), we need to know the coefficients of $C(x)\in \zpx$ to a $\pi$-adic precision of
$\left\lceil(2h_\phi+V)/e\right\rceil$ digits.

We denote by $\exp(F)$ the exponent of the polynomial $F$ (see Theorem \ref{theoexp}).
As for all polynomials $B(x)\in K[x]$ that occur in the algorithm
the element $B(\t)$ is integral,
they can be represented in the form $B(x)=b(x)/\pi^d$ where $b(x)\in\zpx$ and $0\le d\le\exp(F)$. So the loss of precision in each multiplication in steps
(5b), (5c), and (5d) is at most $\exp(F)$ $\pi$-adic digits.
Thus the needed precision for $C(x)$ can be guaranteed if $c_0(x)$ and $c_1(x)$ are computed with a $\pi$-adic precision of
$\left\lceil(2h_\phi+V)/e\right\rceil+4\exp(F)$ digits. To this purpose,
it is sufficient to conduct the division with remainder with this precision.

\begin{lemma}\label{lemprec}
If all polynomials in the main loop in Algorithm \ref{algaccelshort}
are represented in the form $b(x)/\pi^d$ where $b(x)\in\zpx$ and $0\le d\le\exp(F)$,
a $\pi$-adic precision of $\left\lceil\frac{2h+V}{e}\right\rceil+4\exp(F)$
for the numerator is sufficient in each iteration of the main loop.
\end{lemma}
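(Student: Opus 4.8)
The plan is to promote the informal reverse-pass discussion preceding the statement into a precise bookkeeping of $\pi$-adic precision. I would analyse one iteration of the main loop of Algorithm~\ref{algaccelshort} (step (7)) from its last instruction backwards to its first, recording at each point how many $\pi$-adic digits of the numerator of each quantity are needed, and reading off at the end the precision required for the division with remainder in step (7a).

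First, the target. By Theorem~\ref{main} (equivalence of (2) and (3)), Lemma~\ref{eqwphi} and Corollary~\ref{polynomial}, the correction $C$ produced in step (7d) has coefficients in $\oo$ and the next approximation $\phi+C$ satisfies $v\big((\phi+C)(\t)\big)\ge (V+2h)/e$; hence $\phi+C$, and therefore $C$, is determined modulo $\m^{\lceil(2h+V)/e\rceil}$, so it suffices to compute $C$ to $\lceil(2h+V)/e\rceil$ digits of its numerator. Next I would establish the structural fact that makes the precision propagate: every polynomial built inside the loop has degree less than $m$ and integral value at $\t$. This holds for the remainders $c_0,c_1$ (for $c_0$ because $c_0(\beta)=f(\beta)$ at a root $\beta$ of the current approximation, so $v(c_0(\t))=w(c_0)/e\ge0$; for $c_1$ because $c_1\in\zpx$, whence $w(c_1)=v_{r+1}(c_1)\ge0$), for $C_0$ and $C_1$ (their values at a root of the current approximation are $\Psi$ evaluated there times an integral element, with total $w$-value $\ge0$), for the Newton iterate $C_1^{-1}$ (the map $x\mapsto x(2-C_1x)$ only adds and multiplies integral elements), and for $C$ (Corollary~\ref{polynomial}). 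Combined with $\oo[\t]\subseteq\oo_L\subseteq\pi^{-\exp(F)}\oo[\t]$ and $\exp(F)=\exp(\phi)$, each of these polynomials is indeed of the form $b(x)/\pi^d$ with $b\in\zpx$ and $0\le d\le\exp(F)$, as the hypothesis demands.

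Now I would bound the loss per operation and count. Reduction modulo the monic $\phi\in\zpx$ maps $\pi^{-\exp(F)}\zpx$ into itself and is exact, so loses nothing. A multiplication of $b_1/\pi^{d_1}$ by $b_2/\pi^{d_2}$ followed by reduction modulo $\phi$ yields another loop polynomial $b_3/\pi^{d_3}$ with $d_3\le\exp(F)$; from $b_1b_2=\pi^{\,d_1+d_2-d_3}b_3$ one then checks that, with quantities kept in the normalised form of the hypothesis, the numerator $b_3$ is recovered to at least $P-\exp(F)$ digits whenever $b_1$ and $b_2$ are known to $P$ digits. The polynomial $\Psi$ is not integral at $\t$, but it is computed once, in step (2) before the loop, and can be stored to any required precision, so the two products by $\Psi$ contribute no error. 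The longest chain of multiplications joining the fresh data $\{c_0,c_1\}$ to $C$ is $c_1\rightsquigarrow C_1\rightsquigarrow(C_1C_1^{-1})\rightsquigarrow C_1^{-1}\rightsquigarrow C$, of length four (the chain through $c_0$ being shorter); hence $c_0$ and $c_1$ must be known to $\lceil(2h+V)/e\rceil+4\exp(F)$ digits, and since they arise solely from $\quotrem(f,\phi)$ in step (7a), it suffices to conduct that division at numerator precision $\lceil(2h+V)/e\rceil+4\exp(F)$. As $\Psi,e,V,\nu_0$ and $\exp(F)$ are fixed throughout, this is the precision required in the iteration.

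I expect the crux to be the bookkeeping of the preceding paragraph: one must verify that the representation $b/\pi^d$ with $d\le\exp(F)$ is available simultaneously for \emph{every} manufactured polynomial (which rests on the integrality claims, hence ultimately on Corollary~\ref{polynomial} and on $\oo_L\subseteq\pi^{-\exp(F)}\oo[\t]$), that the precise per-operation loss bound of $\exp(F)$ digits is justified by a careful choice of how the quantities are normalised, that neither the Euclidean reduction modulo $\phi$ nor the long division producing $c_0,c_1$ enlarges the denominator or loses more than the advertised amount, and that the non-integral multiplier $\Psi$ is handled as a precomputed exact constant rather than counted among the loop polynomials. The rest is routine.
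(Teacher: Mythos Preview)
Your proposal is correct and follows essentially the same reverse-pass precision analysis as the paper's own discussion preceding the lemma (which serves as its proof): identify the target precision $\lceil(2h+V)/e\rceil$ for $C$, note that every intermediate polynomial is of the form $b/\pi^d$ with $d\le\exp(F)$, bound the loss per multiplication by $\exp(F)$, and count four multiplications. One small internal tension: you assert that the two products by $\Psi$ ``contribute no error,'' yet you include the step $c_1\rightsquigarrow C_1$ (which is precisely a $\Psi$-multiplication) among the four error-contributing multiplications in your chain; either reading still yields the stated bound $4\exp(F)$, which matches the paper.
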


\subsection{Complexity of single-factor lifting}

In the following we give a complexity estimate for the steps in the algorithm, assuming that the residue field $\ff{}$ is finite. Let $n=\deg f$, $m=\deg F=\deg\phi$, and $R=\op{depth}(F)$.

\begin{itemize}
\item[(1)] The divisions with remainder can be conducted in $O((n-m)m)$ operations in $\oo$.
\item[(2)] By \cite[Lem.4.21]{HN}, the computation of $w(a_1)=v(a_1(\t))$ is essentially equivalent to the computation of the $(\phi_1,\dots,\phi_r)$-multiadic expansion of $a_1$. By \cite[Lemma 18]{pauli10} it takes $m^2$ operations in $\oo$ to compute $w(a_1)$.
\item[(3)]
The polynomial $\Psi$ with $w(\Psi)=-w(a_1)$ is constructed as $\Psi(x)=\pi^{j_\pi}\phi_1^{j_1}\dots\phi_R^{j_R}$, for exponents $j_\pi,j_1,\dots,j_R$ that can be found in $O((\log m)^3)$ integer operations of integers less than
           $m$ by Lemma \ref{lemPsi}.  The power product  needed for computing $\Psi(x)$
           can be evaluated in  $O(m^2)$ operations in $\oo$.
\item[(4)] Two polynomials of degree up to $m$ can be multiplied in $O(m^2)$ operations in $\oo$,
           the reduction by the polynomial $\phi$ also takes $O(m^2)$ operations in $\oo$.
\item[(5)] By \cite[section 9]{pauli10} a polynomial representation of the initial value of $C_1^{-1}$ can be
           found in $O(m^2(\log m)^2)$ operations in $\oo$.
\item[(6)] There are $\log_2(e(\nu-\nu_0))$ iterations of the main loop.
           In each iteration there are two divisions with remainder that take
           $O((n-m)m)$ operations in $\oo$.
           Furthermore, the iteration requires five multiplications and two additions;
these operations, including the reduction by the polynomial $\phi(x)$, take
           $O(m^2)$ operations in $\oo$.  So in total each iteration of the loop consists of $O(nm)$ operations in $\oo$.
\end{itemize}

If we so do not take the necessary $\pi$-adic precision into account we obtain:

\begin{lemma}
Let $K$ be a local field with finite residue field, $\oo$ its valuation ring and $f(x)\in\oo[x]$ a monic separable polynomial of degree $n$.
Algorithm \ref{algaccelshort} can lift a Montes approximation $\phi(x)\in\oo[x]$ to an irreducible factor $F(x)\in\oo[x]$ of degree $m$
of $f(x)$, to a precision of $\nu$ $\pi$-adic digits, in $O\left(n m[(\log m)^2+\log(e\nu)]\right)$ operations in $\oo$, where $e$
is the ramification index of $K[x]/(F(x))$ over $K$.
\end{lemma}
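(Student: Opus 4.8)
The plan is to assemble the per-step complexity bounds already listed in the itemized discussion preceding the statement, and to account for the number of main-loop iterations together with the cost of each iteration. First I would record the costs of the preliminary steps (1)--(5): step (1) costs $O((n-m)m)$, step (2) costs $O(m^2)$, step (3) costs $O((\log m)^3)$ integer operations plus $O(m^2)$ operations in $\oo$ for the power-product evaluation, step (4) costs $O(m^2)$, and step (5) costs $O(m^2(\log m)^2)$ by the cited bound from \cite[Sec.9]{pauli10}. Since $m\le n$, all of these are absorbed into $O(nm(\log m)^2)$, except that one should keep the $O((n-m)m)$ contribution explicit as it is of the same order as the loop cost.

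Next I would count the main-loop iterations. By Lemma \ref{lem45okutsu} and the discussion opening section \ref{secAlgo}, it suffices to reach a Montes approximation with $h_\phi\ge e(\nu-\nu_0)$; since the loop doubles $h$ each time (this is the content of Theorem \ref{main}, which guarantees $h_\Phi\ge 2h$), the number of iterations is $\lceil\log_2(e(\nu-\nu_0))\rceil=O(\log(e\nu))$, using $\nu_0<\nu$. Then I would bound the cost of a single iteration: by item (6) of the preceding discussion, each iteration consists of two divisions with remainder, costing $O((n-m)m)$ operations in $\oo$, and a fixed number (five multiplications, two additions, plus reductions modulo $\phi$) of polynomial operations on polynomials of degree $<n$, each costing $O(m^2)$ operations in $\oo$; since $m\le n$, the iteration cost is $O(nm)$ operations in $\oo$.

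Multiplying the per-iteration cost by the iteration count gives $O(nm\log(e\nu))$ for the loop, and adding the preliminary cost $O(nm(\log m)^2)$ (dominated by step (5)) yields the claimed total of $O\bigl(nm[(\log m)^2+\log(e\nu)]\bigr)$ operations in $\oo$. The only remaining point is to confirm that $e$ in the statement is indeed the ramification index of $K[x]/(F(x))$ over $K$, which is exactly the quantity $e=e_1\cdots e_r=e(L/K)$ fixed in section \ref{secApprox}, so no conversion is needed.

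The main obstacle, such as it is, is not any single estimate but the bookkeeping: one must be careful that every polynomial appearing in the loop has degree strictly below $n$ (so that the $O(m^2)$, resp.\ $O((n-m)m)$, bounds apply uniformly), and that the number of arithmetic operations per iteration is genuinely bounded by an absolute constant independent of the iteration index. Both facts follow from the explicit description of the main loop in Algorithm \ref{algaccelshort}: the polynomials $c_0,c_1,C_0,C_1,C_1^{-1},C,\phi$ are all reduced modulo $\Phi$ (degree $m$) or are coefficients in a $\Phi$-adic development (hence degree $<m$), and steps (7a)--(7f) contain a fixed list of operations. Since this bound deliberately ignores the growth of the $\pi$-adic precision quantified in Lemma \ref{lemprec}, the argument is purely a matter of collecting the itemized estimates, and no delicate point arises.
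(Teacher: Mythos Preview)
Your proposal is correct and follows exactly the paper's approach: the lemma has no separate proof in the paper, being an immediate summary of the itemized cost estimates (1)--(6) that precede it, and your argument simply collects those bounds (step~(5) dominating the preliminaries with $O(m^2(\log m)^2)\le O(nm(\log m)^2)$, and $O(\log(e\nu))$ loop iterations at $O(nm)$ each) to obtain the stated total.
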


In the special case $K=\Q_p$ we include the cost of the operations in $\Z_p$ in our complexity estimate.
In our estimates we assume that two $p$-adic numbers of precision
$\nu$ can be multiplied in
$O(\nu\log \nu\log\log \nu)=O(\nu^{1+\epsilon})$
operations of integers less than $p$ \cite{schoenhage-strassen}.

Because it is our goal to give a complexity estimate for polynomial factorization in general and
the cost of steps (1), (2), (3), and (4) is included in the complexity estimate of Montes algorithm
we only consider the main loop in the next lemma.

\begin{lemma}
Let $f(x)\in\Z_p[x]$ be a monic separable polynomial of degree $n$. Algorithm \ref{algaccelshort} can lift a Montes approximation $\phi(x)\in\Z_p[x]$ to an irreducible factor
$F(x)$ of degree $m$ of $f(x)$, to a $p$-adic precision of $\nu$ digits, in
$O\left(n m[\nu^{1+\epsilon}+v(\disc(F))^{1+\epsilon}]\right)$ operations of integers less than $p$ in the main loop.
\end{lemma}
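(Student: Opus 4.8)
The plan is to take the per-iteration operation count over $\oo=\Z_p$ that was already established in the discussion above (item (6): the main loop runs $\log_2(e(\nu-\nu_0))$ times, and one iteration, whatever the slope parameter $h$, consists of $O(nm)$ operations in $\oo$ --- two divisions with remainder of cost $O((n-m)m)$, plus $O(m^2)\subseteq O(nm)$ multiplications and additions), pin down the $p$-adic precision at which these operations must be carried out, and then convert ``operations in $\Z_p$'' into ``operations with integers less than $p$''. So the only genuinely new ingredients are a precision estimate and the cost of truncated $p$-adic arithmetic.

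For the precision I would start from Lemma~\ref{lemprec}: in the iteration with slope parameter $h$ it suffices to keep $\lceil(2h+V)/e\rceil+4\exp(F)$ $\pi$-adic digits of the numerators. Since the loop runs only while $h<e(\nu-\nu_0)$ we always have $h/e<\nu$, and since it is entered at all only when $\nu>\nu_0$ we have $\nu_0<\nu$. To control $V/e$ and $\exp(F)$ I would combine $\mu_F=V/e-\nu_0$ (Theorem~\ref{theoexp}) with Proposition~\ref{indextheorem}: writing $|\lambda_i|/(e_1\cdots e_{i-1})=h_i/(e_1\cdots e_i)$ and $\deg F/m_i=e_if_i\cdots e_Rf_R$, the sum in Proposition~\ref{indextheorem} coincides term by term with the sum defining $\mu_F$, up to the telescoping correction $\sum_{i=1}^R(e_i-1)/(e_1\cdots e_i)=1-1/e$, which yields the clean identity
\[
\mu_F=\frac{2\ind(F)}{m}+1-\frac1e .
\]
Since $v(\disc(F))=v(\disc(L))+2\ind(F)\ge 2\ind(F)$, this gives $\exp(F)=\lfloor\mu_F\rfloor< v(\disc(F))/m+1$ and $V/e=\mu_F+\nu_0<\nu+v(\disc(F))/m+1$; hence the precision needed in every iteration of the main loop is $\lceil(2h+V)/e\rceil+4\exp(F)=O(h/e+\nu+v(\disc(F)))=O(\nu+v(\disc(F)))$.

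Now I would assemble the estimate. By the multiplication bound assumed in the introduction, a multiplication or a division with remainder of $p$-adic numbers truncated to $\mu$ digits costs $O(\mu^{1+\epsilon})$ operations with integers less than $p$, and additions cost $O(\mu)$. Thus each of the $\log_2(e(\nu-\nu_0))$ iterations costs $O(nm)$ operations in $\oo$, each of which costs $O((\nu+v(\disc(F)))^{1+\epsilon})$ operations with integers less than $p$, for a total of $O(nm\,\log(e\nu)\,[\nu^{1+\epsilon}+v(\disc(F))^{1+\epsilon}])$. (A more careful summation, using that the true precision in the iteration with parameter $h$ is only $O(h/e+\nu+v(\disc(F)))$ and that $h$ doubles each time, removes the logarithm from the $\nu^{1+\epsilon}$ term, but this sharpening is not needed.) Finally $\log(e\nu)=\log e+\log\nu$ with $\log\nu=O(\nu^\epsilon)$, and $\log e=O(v(\disc(F))^\epsilon)$ because $v(\disc(F))\ge v(\disc(L))\ge e-1$; multiplying out, bounding the resulting cross terms by $\nu^{1+\epsilon}+v(\disc(F))^{1+\epsilon}$, and renaming $\epsilon$ gives the asserted bound.

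I expect the precision analysis to be the heart of the matter: the only step that is not pure bookkeeping is extracting a bound on $\exp(F)$ and $V/e$ in terms of $v(\disc(F))$, which is exactly what the identity $\mu_F=2\ind(F)/m+1-1/e$ delivers. Everything after that --- the per-iteration arithmetic cost, the summation over iterations, and the absorption of the logarithmic factors --- is routine. One point to check separately is the non-strongly-optimal case $r=R+1$, in which the input type carries one level more than the Okutsu frame and $V$ is correspondingly slightly larger; there $V/e$ is still bounded by $\nu+O(v(\disc(F))/m)$ because $\nu_0<\nu$ holds throughout the loop, so the precision estimate, and hence the conclusion, is unaffected.
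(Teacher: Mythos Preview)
Your argument is correct and follows the same overall strategy as the paper: bound the precision in each iteration via Lemma~\ref{lemprec}, convert the $\Z_p$-operations to bit operations using Sch\"onhage--Strassen, control $V/e$ and $\exp(F)$ by $v(\disc(F))$, and absorb logarithmic factors into the $\epsilon$.

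Two tactical differences are worth noting. First, you derive the identity $\mu_F=2\ind(F)/m+1-1/e$ from Proposition~\ref{indextheorem} and Theorem~\ref{theoexp}, which gives the sharper bound $\exp(F)<v(\disc(F))/m+1$; the paper instead uses the cruder chain $V/e\le 2\exp(F)\le 2\ind(F)\le v(\disc(F))$. Your identity is cleaner and makes the dependence on $m$ explicit, though the extra factor of $1/m$ is not exploited in the final estimate. Second, you bound the precision \emph{uniformly} by $O(\nu+v(\disc(F)))$ (using $h/e<\nu$ and $\nu_0<\nu$), multiply by the $\log(e\nu)$ iterations, and then absorb the logarithm into $\epsilon$ at the end; the paper instead sums the varying precisions $\sum_j((2^j+5V)/e)^{1+\epsilon}$ geometrically, which kills the logarithm on the $\nu^{1+\epsilon}$ term already in the summation, and then only needs to absorb $s=\log(e\nu)$ on the $(V/e)^{1+\epsilon}$ term. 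Both routes land in the same place once $\epsilon$ is renamed. Your use of the loop invariant $\nu_0<\nu$ to bound $V/e$ is a tidy way to handle both the strongly and non-strongly optimal cases uniformly, whereas the paper's bound $\nu_0\le\exp(F)$ is more delicate when $r=R+1$.
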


\begin{proof}
Let $L=\Q_p[x]/(F(x))$, and let $e$ be the ramification index of $L/\Q_p$.
By Lemma \ref{lemprec} the precision needed in the $j$-th iteration ($1\le j<\log_2(e\nu)$) of the main loop is
\[
\left\lceil\frac{2^j+V}{e}\right\rceil+4\exp(F)
\le
\left\lceil\frac{2^j+5V}{e}\right\rceil,
\]
the last inequality by Theorem \ref{theoexp}. Let $s=\lceil\log_2(e\nu)\rceil$. Clearly, for $\epsilon\ll0$, we have
$$
\sum_{0\le j<s}2^\delta=\left\{
\begin{array}{ll}
 O(2^s),&\mbox{ for }\delta=1+\epsilon,\\
O(s),&\mbox{ for }\delta=\epsilon
\end{array}
\right.
$$
Now, the number of operations of integers less than $p$ in the main loop is approximately
\begin{align*}
e^{-(1+\epsilon)}\sum_{1\le j<s} (2^j+5V)^{1+\epsilon}
&\le e^{-(1+\epsilon)}\sum\nolimits_{1\le j<s} \bigl(2^{j(1+\epsilon)}+2^{j\epsilon}5V+2^j(5V)^{\epsilon}+(5V)^{1+\epsilon}\bigr)\\
&=e^{-(1+\epsilon)}O\left(2^{(1+\epsilon)s}+sV+2^sV^{\epsilon}+s V^{1+\epsilon}\right)\\
&=O\left(\nu^{1+\epsilon}+e^{-\epsilon}s(V/e)+\nu(V/e)^{\epsilon}+s(V/e)^{1+\epsilon}\right)\\
&=O\left(\nu^{1+\epsilon}+s(V/e)^{1+\epsilon}\right),
\end{align*}
the last equality because $e^{-\epsilon}s(V/e)$ is dominated by $s(V/e)^{1+\epsilon}$ and $\nu(V/e)^{\epsilon}$ is dominated by either $\nu^{1+\epsilon}$ or $s(V/e)^{1+\epsilon}$. By Theorem \ref{theoexp},
$$
V/e=\exp(F)+\nu_0\le 2\exp(F)\le 2\ind(F)\le v(\disc(F)).
$$
On the other hand, $\log e\le \log v(\disc(L))\le \log v(\disc(F))$, so that $(\log e)v(\disc(F))^{1+\epsilon}=O(v(\disc(F))^{1+\epsilon})$. Therefore, the term $s(V/e)^{1+\epsilon}=O(\log(e\nu)v(\disc(F))^{1+\epsilon})$ is dominated either by $\nu^{1+\epsilon}$ (if $\nu\ge v(\disc(F))$) or by $v(\disc(F))^{1+\epsilon})$ (if $\nu<v(\disc(F))$).
This ends the proof of the lemma.
\end{proof}

\subsection{Complexity of Polynomial Factorization over $\Z_p[x]$}

The complexity estimates for Montes algorithm \cite{ford-veres,pauli10}
are based on \cite[Proposition 4.1]{pauli01}, which asserts that
if $n v(\phi(\t)) > 2v(\disc(f))$ for all roots $\t$ of $f(x)$ and if the degree of $\phi(x)$
is less than or equal to the degree of any irreducible factor of $f(x)$, then $f(x)$ is irreducible.
Because the improvement of the approximation $\phi(x)$ to an irreducible factor of $f(x)$
measured by $v(\phi(\t))$ is at least $2/n$ in each step,
Montes algorithm determines whether a polynomial is irreducible in at most
$v(\disc(f))$ steps.  A detailed analysis of the algorithm yields:

\begin{theorem}[{\cite[Theorem 1]{pauli10}}]
Let $p$ be a fixed prime.  We can establish whether a polynomial
$f(x)\in\Z_p[x]$ of degree $n$ is irreducible in at most $O(n^{2+\epsilon}v(\disc(f))^{2+\epsilon})$
operations of integers less than $p$.
\end{theorem}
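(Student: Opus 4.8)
The plan is to run Montes algorithm on $f(x)$ and use the irreducibility criterion of \cite[Proposition 4.1]{pauli01} as the stopping rule. Recall that Montes algorithm maintains, along each branch of its computation tree, a $\phi$-polynomial $\phi(x)$ whose degree divides $n$ and is at most the degree of every irreducible factor of $f$ still living in that branch; one iteration consists of computing the $\phi$-adic expansion of $f$, reading off the principal Newton polygon $N_{\phi,w}^-(f)$ and the residual polynomials attached to its sides, and then either splitting off genuine factors, certifying irreducibility of a piece via \cite[Proposition 4.1]{pauli01}, or producing a refined $\phi$-polynomial at the next level with strictly larger $v(\phi(\t))$. The quantitative engine is that each refinement raises $v(\phi(\t))$ by at least $2/n$ --- one unit of Newton slope divided by a horizontal length $\le n$ --- so after at most $v(\disc(f))$ refinements one reaches $n\,v(\phi(\t)) > 2\,v(\disc(f))$, at which point \cite[Proposition 4.1]{pauli01} forces irreducibility (or the factor has already been peeled off). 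Hence the number of Newton-polygon iterations along any branch is $O(v(\disc(f)))$.

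Next I would bound the cost of a single iteration. With schoolbook polynomial arithmetic, dividing $f$ (degree $n$) by $\phi$ and carrying out the subsequent $\phi$-adic manipulations, the higher-order valuation computations, and the construction of the new representative as a power product of the earlier $\phi_i$'s (as in Lemma~\ref{lemPsi}) all fit in $O(n^2)$ operations in $\oo=\Z_p$; the residual-polynomial factorizations happen over residue extensions of $\ff{}$ of degree $\le n$ and contribute only lower-order terms. Crucially, every $p$-adic coefficient only has to be carried to precision $O(v(\disc(f)))$: this is exactly the Montes--Okutsu precision estimate (cf. Lemma~\ref{lemprec} and Theorem~\ref{theoexp}, which give $V/e\le v(\disc(F))\le v(\disc(f))$), so no iteration ever needs more precision than the target discriminant bound. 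Each operation in $\oo$ at that precision costs $O(v(\disc(f))^{1+\epsilon})$ operations with integers less than $p$ by Schönhage--Strassen \cite{schoenhage-strassen}. Multiplying the $O(v(\disc(f)))$-iteration bound by the $O(n^2)$ operations in $\oo$ per iteration by the $O(v(\disc(f))^{1+\epsilon})$ integer operations each, and using $n^2\le n^{2+\epsilon}$, yields $O(n^{2+\epsilon}v(\disc(f))^{2+\epsilon})$.

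The main obstacle --- the ``detailed analysis'' alluded to in the statement --- is the amortized accounting over the branching structure of the algorithm. One must verify that the refinement/splitting tree behaves well: that the iteration counts and the degrees handled across sibling branches combine correctly, so that the naive estimate ``$n$ branches times $v(\disc(f))$ steps each'' is replaced by a bound that telescopes because the branch degrees partition $n$; that the precision required never exceeds $O(v(\disc(f)))$ anywhere in the tree, so that no iteration is more expensive than claimed; and that the residual factorizations over the growing residue towers do not smuggle in an extra factor of $n$. Every ingredient --- Newton polygons of higher order, residual polynomials, the bound $V/e\le v(\disc(f))$, fast $p$-adic multiplication --- is already available, so the proof is this careful cost amortization rather than a new idea; the details are carried out in \cite{pauli10}.
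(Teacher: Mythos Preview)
The paper does not give its own proof of this theorem; it is simply quoted from \cite{pauli10}. What the paper does supply is the two-sentence sketch immediately preceding the statement: the stopping rule is \cite[Proposition~4.1]{pauli01}, each iteration improves $v(\phi(\t))$ by at least $2/n$, so at most $v(\disc(f))$ iterations suffice, and ``a detailed analysis of the algorithm yields'' the stated bound. Your proposal is exactly this sketch, fleshed out with the per-iteration cost estimate ($O(n^2)$ operations in $\oo$), the precision bound (at most $O(v(\disc(f)))$ $p$-adic digits, via $V/e\le v(\disc(f))$), and fast integer multiplication; you also correctly note that the remaining work is the amortized bookkeeping carried out in \cite{pauli10}. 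So your approach coincides with the paper's.

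One small slip: your parenthetical justification ``one unit of Newton slope divided by a horizontal length $\le n$'' gives $1/n$, not $2/n$. The $2/n$ comes from the fact that the abscissa of $N_i^-(f)$ is at most $n/m_i$ and the increment in $v(\phi(\t))$ is at least $1/(e_1\cdots e_i)\ge m_i/n$, combined with the further gain from the degree jump; the paper simply asserts the $2/n$ bound without proof here. This does not affect the final $O(v(\disc(f)))$ iteration count, but the heuristic you wrote does not quite produce the constant you claimed.
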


If $f(x)$ is reducible, Montes algorithm finds a $\phi_i(x)$ such that $N_{i}^-(f)=N_{\phi_i,v_i}^-(f))$
consists of more than one segment in less than $v(\disc(f))$ iterations.
Each of these segments corresponds to a factor $g(x)$ of $f(x)$ and
Montes algorithm branches to find improved approximations to each of these factors based on $\phi_i(x)$.
Now, by \cite[Proposition 4.1]{pauli01},
the irreducibility of $g(x)$ can be
determined or the algorithm comes across a Newton polygon whose principal part consists of more than one segment
in less than $v(\disc g)$ steps.
Thus, since $v(\disc(gh))\ge v(\disc(g))+v(\disc(h))$ for all polynomials $g(x)$ and $h(x)$,
$v(\disc(f))$ is also an estimate for the number of steps needed to find Montes approximations
to all irreducible factors of $f(x)$.  We get:

\begin{corollary}
Let $p$ be a fixed prime. Montes approximations to all irreducible factors of
$f(x)\in\Z_p[x]$ of degree $n$  can be found in at most $O(n^{2+\epsilon}v(\disc(f))^{2+\epsilon})$
operations of integers less than $p$.
\end{corollary}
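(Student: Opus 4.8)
The plan is to reduce the reducible case to the irreducible case via the factorization tree that Montes algorithm naturally produces, and then bound the total work using the super-additivity of $v(\disc(\cdot))$. First I would recall the mechanism: Montes algorithm processes a common approximation $\phi_i(x)$ to several irreducible factors and, after at most $v(\disc(f))$ refinement steps (each increasing $v(\phi_i(\t))$ by at least $2/n$, by \cite[Prop.4.1]{pauli01}), reaches a stage where the principal Newton polygon $N_i^-(f)=N_{\phi_i,v_i}^-(f)$ has more than one side, or a residual polynomial factors nontrivially. At that branching point, $f(x)$ is split (in the sense of the algorithm's bookkeeping) into a product $g_1(x)\cdots g_t(x)$ of polynomials of strictly smaller degree, and the algorithm recurses independently on each $g_j(x)$, re-using $\phi_i(x)$ as the starting approximation.

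The key step is to set up the recursion cleanly. Let $T(f)$ denote the number of elementary operations (integers less than $p$) that Montes algorithm performs to produce Montes approximations to all irreducible factors of $f(x)$. The detailed analysis behind the irreducible case (\cite[Thm.1]{pauli10}) shows that, between two consecutive branchings, the cost incurred on a subpolynomial $g$ of degree $d$ over $v(\disc(g))$ iterations is $O(d^{2+\epsilon}v(\disc(g))^{2+\epsilon})$; in particular each branch is handled within the same bound as if it were irreducible, with $n$ replaced by $\deg g$ and $v(\disc(f))$ by $v(\disc(g))$. Summing over the recursion tree, and using $\deg g\le n$ for every node together with $\sum_j v(\disc(g_j))\le v(\disc(f))$ at each branching (which follows from $v(\disc(gh))\ge v(\disc(g))+v(\disc(h))$, itself a consequence of $\disc(gh)=\disc(g)\disc(h)\operatorname{Res}(g,h)^2$), gives
\[
T(f)=O\!\left(n^{2+\epsilon}v(\disc(f))^{2+\epsilon}\right),
\]
since $n^{2+\epsilon}\bigl(\sum_j v(\disc(g_j))\bigr)^{2+\epsilon}\ge \sum_j (\deg g_j)^{2+\epsilon}v(\disc(g_j))^{2+\epsilon}$ and the number of branchings is bounded by $n$.

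The main obstacle I expect is not the combinatorial bookkeeping but making precise the claim that the per-branch cost really is dominated by the irreducible-case estimate with the reduced parameters — that is, that nothing is lost in re-using $\phi_i(x)$ across the branch and that the precision needed in each subtree is governed by $v(\disc(g_j))$ rather than $v(\disc(f))$. This is exactly the content invoked via \cite[Prop.4.1]{pauli01} and the analysis in \cite{pauli10}, so in the write-up I would lean on those results rather than re-deriving the iteration count; the remaining work is then just the summation argument above and a remark that $v(\disc(f))$ simultaneously bounds the number of refinement steps on any branch and the total number of branchings, which yields the stated corollary.
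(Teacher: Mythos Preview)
Your proposal is correct and follows essentially the same route as the paper: both arguments branch the algorithm at multi-sided Newton polygons, invoke \cite[Prop.~4.1]{pauli01} to bound the number of refinement steps on each branch by the discriminant valuation of the corresponding factor, and then use the super-additivity $v(\disc(gh))\ge v(\disc(g))+v(\disc(h))$ to conclude that the total number of steps is still bounded by $v(\disc(f))$. The paper's version is terser---it simply observes that the total iteration count remains $\le v(\disc(f))$ and so the irreducibility bound carries over verbatim---whereas you set up an explicit recursion tree and sum, but the substance is the same.
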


Let $m_1,\dots,m_k$ denote the degrees of the irreducible factors $F_1,\dots,F_k$ of $f(x)$.
As $\sum_{i=1}^k m_i=n$
the Montes approximations of all factors can be lifted to a precision of $\nu$ $p$-adic digits in
\[
\sum_{i=1}^k
O\left(n m_i[\nu^{1+\epsilon}+v(\disc(F_i))^{1+\epsilon}]\right)
=
O\left(n^{2} [\nu^{1+\epsilon}+v(\disc(f))^{1+\epsilon}]\right)
\]
operations of integers less than $p$. Thus, we find the following general estimation for the complexity of the factorization algorithm that combines Montes algorithm with the single-factor lifting algorithm.

\begin{theorem}
Let $p$ be a fixed prime, $f(x)\in\Z_p[x]$ a polynomial of degree $n$, and $\nu\in\N$ a prescribed precision.
One can find approximations $\Phi(x)\in\Z_p[x]$ to all irreducible factors $F(x)$ of $f(x)$, with $F(x)\equiv\Phi(x) \mod p^\nu$,
in at most $O(n^{2+\epsilon}v(\disc f)^{2+\epsilon}+n^{2}\nu^{1+\epsilon})$ operations of integers less than
$p$.
\end{theorem}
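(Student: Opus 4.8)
The plan is to simply combine the two ingredients already established: the cost of computing Montes approximations to all irreducible factors, and the cost of the single-factor lifting routine applied to each factor. First I would invoke the Corollary stated just above, which gives Montes approximations to all irreducible factors $F_1,\dots,F_k$ of $f(x)$ in $O(n^{2+\epsilon}v(\disc f)^{2+\epsilon})$ operations of integers less than $p$. This term will account for the first summand of the claimed bound.

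Next I would apply the complexity lemma for Algorithm \ref{algaccelshort} (the one bounding the main loop by $O(nm[\nu^{1+\epsilon}+v(\disc(F))^{1+\epsilon}])$) once for each factor $F_i$ of degree $m_i$, and add up. Here one uses $\sum_{i=1}^k m_i = n$ to pull the factor $n$ out of the sum, so that $\sum_i O(nm_i\nu^{1+\epsilon}) = O(n^2\nu^{1+\epsilon})$, and similarly $\sum_i O(nm_i v(\disc(F_i))^{1+\epsilon}) \le O\bigl(n\,(\sum_i m_i)\,v(\disc(f))^{1+\epsilon}\bigr) = O(n^2 v(\disc f)^{1+\epsilon})$, where for the last step I would use $v(\disc(F_i))\le v(\disc(f))$, which follows from the superadditivity $v(\disc(gh))\ge v(\disc(g))+v(\disc(h))$ quoted in the excerpt. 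This is the computation already displayed in the paragraph preceding the theorem.

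Finally I would add the two contributions. The term $O(n^2 v(\disc f)^{1+\epsilon})$ coming from the lifting is dominated by the Montes term $O(n^{2+\epsilon}v(\disc f)^{2+\epsilon})$, so the total is $O(n^{2+\epsilon}v(\disc f)^{2+\epsilon}+n^2\nu^{1+\epsilon})$, as claimed. One should also note that the per-factor preliminary steps (1)--(5) of Algorithm \ref{algaccelshort} are subsumed in the Montes complexity estimate, so only the main-loop cost needs to be added, which is exactly what the lifting lemma provides.

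There is no real obstacle here: the theorem is a bookkeeping consequence of the two preceding results. The only point requiring a word of care is the passage from $\sum_i n m_i v(\disc(F_i))^{1+\epsilon}$ to $n^2 v(\disc f)^{1+\epsilon}$ — one must verify that bounding each $v(\disc(F_i))$ by $v(\disc(f))$ before summing (rather than summing the discriminant valuations) still yields the stated bound, which it does since we then only need $\sum_i m_i = n$. After that, checking that the lifting term is absorbed by the factorization term is immediate.
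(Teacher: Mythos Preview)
Your proposal is correct and follows essentially the same route as the paper: it combines the Corollary on the cost of Montes approximations with the per-factor lifting lemma, sums over the irreducible factors using $\sum_i m_i=n$ and $v(\disc(F_i))\le v(\disc(f))$, and then absorbs the $O(n^2 v(\disc f)^{1+\epsilon})$ term into the Montes term. The only difference is that you spell out a couple of points (the absorption step and the remark on the preliminary steps (1)--(5)) that the paper leaves implicit.
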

%%%%%%%%%%%%%%%%%%%%%%%%%%%%%%%%%%%%%%%%%%%%%%%%%%%%%%%%%%%%%%%%%%%%%%%%

\subsection{Direct single-factor lifting}

Let $f(x)\in\oo[x]$ and assume we know a monic factor $\overline\phi(x)\in\ff{}[x]$
of $\overline{f}(x)\in\ff{}$ such that ${\overline\phi}^2\nmid\overline f$. By Hensel lemma, there is a unique irreducible factor $F(x)\in\oo[x]$ of $f(x)$ whose reduction modulo $\m$ is $\overline\phi(x)$.
In this case, any monic lift $\phi(x)\in\zpx$ of $\overline\phi(x)$ is already a Montes approximation to $F(x)$, with respect to the type of order zero determined by $\overline\phi(x)$. We can use the single-factor lifting algorithm directly without any prior iterations of Montes algorithm.
If we specialize Algorithm \ref{algaccel} accordingly we obtain:

\Algo{Direct Single-Factor Lifting}{algpure}
{$f\in\oo[x]$, $\overline\phi\in\ff{}[x]$ irreducible such that $\overline\phi\mid\overline f$
but $\overline\phi^2\nmid\overline f$, $\nu\in\N$}
{An irreducible polynomial $\Phi\in\zpx$ dividing $f$ modulo $\pi^\nu$, such that $\overline\Phi=\overline\phi$}
{
\begin{enumerate}
\item  $a, a_0 \gets \quotrem(f,\phi)$, $a_1 \gets a \bmod \phi$
\item $h_\phi\gets v_1(a_0)$
\item  Find $a_1^{-1}\in\oo[x]$ such that $\overline{a}_1 \overline{a_1^{-1}}\equiv 1 \bmod \overline\phi$
\item for $1\le i\le\lceil\log_2(h_\phi)\rceil$: $a_1^{-1}\gets
 a_1^{-1}(2-a_1 a_1^{-1})\bmod\phi$
\item  $A\gets a_0a_1^{-1} \bmod\phi$, $\Phi\gets\phi+A$
\item  for $1\le i< \lceil\log_2(\nu/h_\phi)\rceil$:
\begin{enumerate}
\item  $a, a_0 \gets \quotrem(f,\Phi)$, $a_1 \gets a \bmod \Phi$
\item  $a_1^{-1}\gets a_1^{-1}(2-a_1 a_1^{-1})\bmod\Phi$
\item  $A\gets a_0a_1^{-1} \bmod\Phi$, $\Phi\gets\Phi+A$
\end{enumerate}
\item return $\Phi$
\end{enumerate}
}

The valuation $v_1$ of step (2) was defined in section \ref{secTypes}: $v_1(a_0)$ is the minimum of the $v$-values of the coefficients of $a_0$. The computation of the initial value of $a_1^{-1}$ in step (3) is trivial now; it amounts to compute a section of the ring homomorphism $\oo[x]\lra \oo[x]/(\pi,\phi)$.
The $\pi$-adic precision required in each iteration of the first loop is $2^i$ digits.
In the second loop we need a precision of $2^{i+1}h_\phi$ digits.
It is easy to see that the complexity of Algorithm \ref{algpure} is the same as the complexity of
the quadratic Hensel Lift algorithm \cite{zassenhaus}. In practice, Algorithm \ref{algpure} has a slightly better performance.
%%%%%%%%%%%%%%%%%%%%%%%%%%%%%%%%%%%%%%%%%%%%%%%%%%%%%%%%%%%%%%%

\section{Experimental results}\label{secNumerical}

The combination of  algorithm \ref{algaccel} with Montes algorithm yields a new $p$-adic polynomial factorization algorithm. We have implemented this algorithm in {\tt Magma} to check its practical efficiency; the implementation can be obtained from {\url {http://themontesproject.blogspot.com}}.
Our routine, called {\tt SFLFactor}, takes a separable monic  polynomial $f\in\Z[x]$, a prime number $p$ and a certain precision $\nu$ and returns $p$-adically irreducible polynomials $\phi_1,\dots,\phi_m\in\Z[x]$ such that $f\equiv \phi_1\dots\phi_m\pmod{p^\nu}$.

Besides its good theoretical complexity, the routine has a high efficiency in practice. We have applied it to the test polynomials given in the Appendix, and compared the results with those of the standard $p$-adic factorization routines of {\tt Magma}  and  {\tt PARI}. We present here some of these results. All tests have been done in a Linux server, with two Intel Quad Core processors,
running at 3.0 Ghz, with 32Gb of RAM memory. Times are expressed in miliseconds.

\subsection*{Running time vs depth}

The graphic in Figure \ref{table1} shows the running times of our factorization routine  applied to the polynomials $E_{p,j}(x)$ for $p\le 1000$, compared to those of {\tt Magma} and  {\tt PARI}'s functions. {\tt Magma} can't go beyond $j=4$ in less than an hour, while {\tt PARI} reaches only $j=5$; our package takes at most 2 seconds to factor any of these polynomials. The running time of {\tt SFLFactor} on the polynomials $E_{p,8}(x)$ is better observed in Figure \ref{graficr}.

\subsection*{Running time vs width}

The graphic in Figure \ref{table3} compares the behaviour of {\tt SFLFactor},  {\tt Magma} and {\tt PARI} with respect to the width, using the test polynomials $B_{p,k}(x)$ for $k\le 1000$. Since the width tends to be a very pessimistic bound, we have also tested the performance of  {\tt SFLFactor}, with the test polynomials $A_{2,50,50001,r}(x)$, for $1\le r\le 1000$. These polynomials have all the same (large) width, but each one requires $r+1$ iterations of the main loop of Montes algorithm, to detect its $p$-adic irreducibility. Thus, for $r$ large, they constitute very ill-conditioned examples for our algorithm.
The running-times are shown in Figure \ref{table4}.

\begin{center}

\begin{figure}
\caption{\small Running times (in miliseconds) of {\tt SFLFactor } (red), {\tt Magma} (green) and {\tt PARI} (blue).}
\label{table1}
\includegraphics[height=40mm,bb=0 0 480 185]{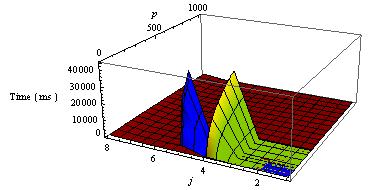}\\
\end{figure}

\begin{figure}
\caption{\small Running times of  {\tt SFLFactor } applied to the polynomials $E_{p,8}(x)$ for $p<1000$.}
\label{graficr}
\includegraphics[height=40mm,bb=0 0 250 210]{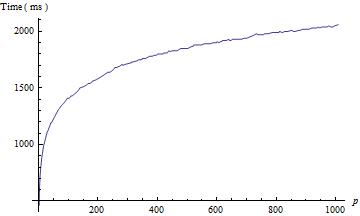}  \\
\end{figure}

\end{center}

\begin{center}
\begin{figure}
\caption{\small   Running times  of {\tt SFLFactor} (red), {\tt Magma}  (green) and {\tt PARI} (blue) applied to the poylnomials $B_{p,k}(x)$}
\label{table3}
\includegraphics[height=40mm,bb=0 0 270 220]{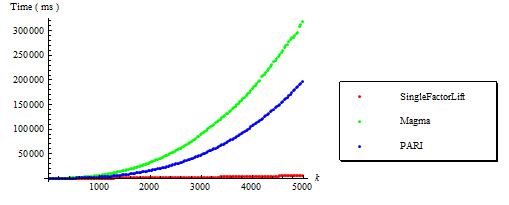}\\
\end{figure}

\begin{figure}
\caption{\small Running times of {\tt SFLFactor} for $A_{2,50,5001,r}(x)$.}
\label{table4}
\includegraphics[height=38mm,bb=0 0 240 220]{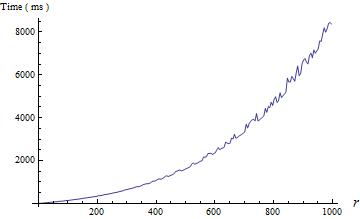}\\
\end{figure}
\end{center}

\subsection*{Running time vs number of factors}

We can observe in Figures \ref{table5} and \ref{table6} the behaviour of {\tt SFLFactor} with respect to the number of factors of the polynomial to be factored. The first graphic  shows the running times of our routine applied to the polynomials $D_{101,p,2,3}(x)$ for the primes $p\in\{1069, 1087, 1091, 1051, 1117, 1097, 919,1009\}$, which cover all the possible splitting types of the $101$-th cyclotomic polynomial.

In Figure \ref{table6} we can compare the performance of our algorithm applied to the polynomials $A_{101,mn,211,0}(x)$ and $A_{101,n,211}^m(x)$. The different height of the polynomials is a plausible explanation for the significative difference in the running times.

%It is also interesting to compare the impact of the depth and the number of factors in our algorithm. Figure \ref{table2} shows the running times of {\tt {\tt SFLFactor }} applied to the polynomials $C_{p,28}(x)$,  $A_{p,6,36}^6(x)$ and $D_{7,p,6,35}(x)$. They have the same degree~(36), $C_{p,28}(x)$ has depth~3, width 258 and index~258, and $A_{p,36,36}^6(x)$ and $D_{7,p,6,35}(x)$ have depth~1, width 36 and index~510.

\subsection*{Statistical tests}

We have tested algorithm \ref{algpure} to compare its practical performance with that of the classical Hensel lift algorithm. For every $m\in\{2,\dots,20\}$ we have built a list of 1000 random  pairs $\{f,\overline{f}_1\}$, where $f(x)\in\Z[x]$ is a separable product of $m$ quartic irreducible polynomials modulo $17$, and $\overline{f}_1\in\ff{17}[x]$ is a factor of $f$.
For each pair, the factor $\overline{f}_1$  is lifted with both algorithms to $\Z_p[x]$  to precision 50,100,150,\dots,1000 successively. Figure \ref{table-stat} shows the average running times, suggesting that Single-factor lifting seems slightly faster than Hensel lift.

\begin{center}
\begin{figure}
\caption{\small Running times of our package for $D_{101,p,2,3}(x)$, $p\in\{1069, 1087, 1091, 1051, 1117, 1097, 919,1009\}$.}
\label{table5}
\includegraphics[height=45mm,bb=0 0 240 200]{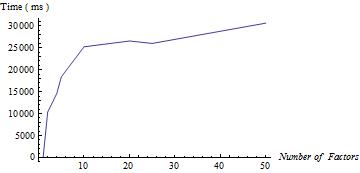}       \\
\end{figure}
\end{center}

\begin{center}
\begin{figure}
\caption{\small Running times of our package for $A_{101,mn,211,0}(x)$ and $A_{101,n,211}^m(x)$.}
\label{table6}
\includegraphics[height=45mm,bb=0 0 320 280]{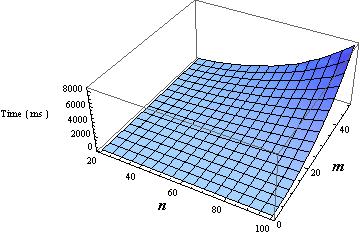}\qquad\qquad
\includegraphics[height=45mm,bb=0 0 320 280]{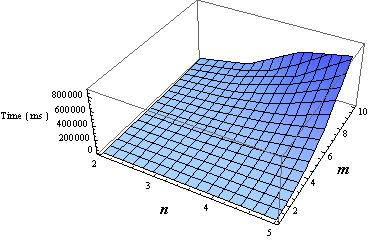}

\end{figure}
\end{center}

%\begin{center}
%\begin{figure}
%\caption{\small Running times of {\tt SLFFactor} applied to the polynomials $C_{p,28}(x)$, $A_{p,6,1}^6(x)$ and $D_{7,p,6,35}$ for $p\le 5000$.}
%\label{table2}
%\includegraphics[height=40mm]{exemple4vsexemple2.eps}   \\
%\end{figure}
%\end{center}

\begin{center}
\begin{figure}
\caption{\small Average running times of  statistical tests on Single-factor Lifting (Green) and Hensel Lift  (Red).}
\label{table-stat}
\includegraphics[height=40mm,bb=0 0 540 240]{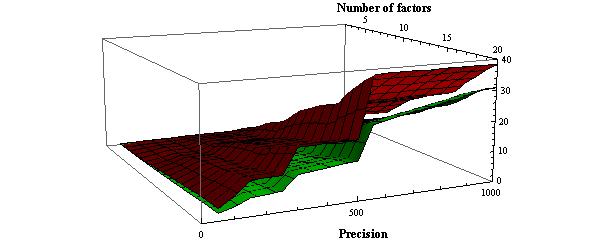}   \\
\end{figure}

\end{center}

%%%%%%%%%%%%%%%%%%%%%%%%%%%%%%%%%%%%%%%%%%%%%%%%%%%%%%%%%%%%%%%%%%

\section*{Appendix: Families of test polynomials}

Along the design of a new algorithm, it is useful to dispose of a bank of benchmarks to test its efficiency. Different authors (\cite{cohen}, \cite{ford-pauli-roblot}) have provided such benchmarks for different problems in computational algebraic number theory. These lists of polynomials have been of great use, but the new algorithms and the fast evolution  of  hardware have  left it out of date.
We propose an update consisting of several parametric families of polynomials, which should cover all the  computational difficulties one may encounter in problems concerning prime ideals in number fields (prime ideal factorization, $p$-adic factorization, computation of $p$-integral bases, etc).

Classically, it has been considered that the invariants of an irreducible polynomial $f(x)\in\Z[x]$ that determine its computational complexity are the degree, the height (maximal size of the coefficients) and, when we focus on a prime number $p$, the $p$-index. The $p$-index of $f(x)$ is the $p$-adic valuation of the index $\left(\Z_K\colon \Z[\t]\right)$, where $\t\in\overline{\Q}$ is a root of $f(x)$, $K=\Q(\t)$ and $\Z_K$ is the ring of integers of $K$. The $p$-index is closely related to the $p$-adic valuation of the discriminant $\disc(f)$.

As mentioned in section \ref{secWidth}, for a finer analysis of the complexity two more invariants must be taken into account: the depth and width of the different $p$-adic irreducible factors of $f(x)$. Therefore, our families of test polynomials are described in terms of different integer parameters which affect its degree, height, index, number of $p$-adic irreducible factors, and their depth and width.
The computational complexity of the aforementioned problems can be adjusted to the reader's convenience by a proper choice of the parameters, by combining different issues or focusing on a concrete one.

The test polynomials are gathered in Table \ref{Tab1}. The parameters appearing in the table may be required to satisfy particular conditions in each family.
%Roughly speaking, the role of the parameters is the following:
%\begin{itemize}
%\item[$\bullet$] $p\in \N$ is a prime number, at which the index, width or depth of the $p$-adic irreducible factors is high.
%\item[$\bullet$] $ n\in \N$ is linked to the degree of the $p$-adic irreducible factors.
%\item[$\bullet$] $m\in \N$ is linked to the number  of $p$-adic irreducible factors.
%\item[$\bullet$]  $k\in \N$ is linked to the height and the $p$-index.
%\end{itemize}

\begin{table}
\caption{Families of test polynomials}\label{Tab1}
\begin{tabular}{|l|}
\hline
$A_{p, n, k,r}(x)=(x+1+p+\dots+p^r)^n+p^{k};$\\ [1ex]
\hline
$A_{p, n, k}^m(x)=(x^n+2p^{k})((x+2)^n+2p^{k})\dots ((x+2m-2)^n+2p^{k})+2p^{mnk};$\\[1ex]
\hline
%B_p(x):=(x^3+p^5)^2-p^{13}x;\\
%\hline
$B_{p, k}(x)=(x^2-2x+4)^3+p^k;$\\[1ex]
\hline
$C_{p,k}(x)= \left((x^6+4 p\, x^3+3 p^2 x^2+4 p^2)^2 +p^6\right)^3+p^{k}$\\[1ex]
\hline
$D_{\ell, p, n, k}(x)=(x^{\ell-1}+x^{\ell-2}+\dots+x+1)^n+p^k$\\[1ex]
\hline
$E_{p, 1}(x)=x^2+p$\\[1ex]
\hline
$E_{p, 2}(x)=E_{p,1}(x)^{2}+(p-1)p^{3}x$\\[1ex]
\hline
$E_{p, 3}(x)=E_{p,2}(x)^{3}+p^{11}$\\[1ex]
\hline
$E_{p, 4}(x)=E_{p,3}(x)^{3}+p^{29}x E_{p,2}(x)$\\[1ex]
\hline
$E_{p, 5}(x)=E_{p,4}(x)^{2}+(p-1)p^{42}xE_{p,1}(x)E_{p,3}(x)^2$ \\[1ex]
\hline
$E_{p, 6}(x)=E_{p,5}(x)^{2}+p^{88}xE_{p,3}(x)E_{p,4}(x)$\\[1ex]
\hline
$E_{p, 7}(x)=E_{p,6}(x)^{3}+p^{295}E_{p,2}(x)E_{p,4}(x)E_{p,5}(x)$\\[1ex]
\hline
$E_{p, 8}(x)=E_{p,7}(x)^{2}+(p-1)p^{632}xE_{p,1}(x)E_{p,2}(x)^2E_{p,3}(x)^2E_{p,6}(x)$\\[1ex]
\hline
\end{tabular}
\end{table}

The main characteristics of these polynomials are  summarized  in Table \ref{Tabsideways}. The notation used in the headers of the table is:
\begin{itemize}
\item[] $\op{depth}_p(f):=$ maximum depth of the $p$-adic irreducible factors of $f$.
\item[] $\op{width}_p(f):=$sum of the components of the widths of all the local factors of $f$.
\item[] $\ind_p(f):=p$-adic valuation of the index of $f$.
\item[] $\Delta_p(K):= p$-adic valuation of the discriminant of the number field $K$ defined by $f$.
\item[] $p\Z_K=$ factorization of the prime $p$ in the ring of integers of $K$. A term $\p_{f}^e$ means a prime ideal with ramification index $e$ and residual degree $f$ (no exponent or subindex are written if they are~1).
\end{itemize}
Further explanations about each family  are given in the subsequent subsections.

It is worth mentioning that the polynomials in our list can be combined to build new examples of test polynomials, whose characteristics will combine those of the factors. The philosophy is: take $f,g$ from the table and form the polynomial $h=fg+p^a$, with $a\in\N$ high enough. Indeed, this is the technique used to build the polynomials $A_{p,n,k}^m(x)$ and $D_{\ell, p, n, k}(x)$. This procedure allows everyone to build its own test polynomial with local invariants at her convenience.\medskip

A final remark concerning the use of our test polynomials: they are not only intended to compare the performance of different algorithms. They are also useful to analyse  the influence of the different parameters in your favourite algorithm. Besides the obvious tests between polynomials in the same family,  more subtle comparisons can be done to study the performance of your algorithm. The following table proposes some of them:\bigskip

\begin{center}
\begin{tabular}{|c|c|l|}
\hline
$f$ & $g$  & useful to check dependency on \\
\hline
\hline
%$A_{p,mn,k,r}(x)$ & $A_{p,n,k}^m(x)$ &  number of factors \\\hline
%$D_{\ell,p,n,k}$ & $D_{\ell,p',n,k}$ &  number of factors\\
%\hline
$E_{p,4}(x)$ &$C_{p,28}(x)$  & number of factors\\
\hline
%$E_{p,8}(x)$ & $A_{p, 864,799,0}(x)$ & depth\\
%\hline
$C_{p,k}(x)$ & $A_{p,36,k,0}(x) $&  depth\\
\hline
$D_{\ell,p,n,k}(x)$ & $A_{p,n(\ell-1),k}(x)$ & width\\
\hline
%$C_{p,k}(x)$ & $A_{p,6,k}^6(x) $&  width\\
%\hline
$A_{p,n,k,0}(x)$ & $A_{p,n,k,k-1}(x)$ & precision\\
\hline
\end{tabular}
\end{center}\bigskip

\noindent{\bf Notation.} From now on, whenever we deal with a prime number $p$, we denote by $v_p$ the $p$-adic valuation of $\Z_p$ normalized by $v_p(p)=1$. \medskip

\begin{sidewaystable}
\vskip 120mm
\caption{Characteristics of the test polynomials}\label{Tabsideways}
\as{1.5}
$$
\begin{array}{|c|c|c|c|c|c|c|c|}
\hline
f & \deg f   &
\begin{tabular}{c}  \#\mbox{ $p$-adic factors}\end{tabular} & \op{depth}_p(f) &
\op{width}_p(f) & \ind_p(f)& \Delta_p(K) & p\Z_K \\
\hline
A_{p, n, k, r} & n & 1 & 1 & \lceil k/n\rceil   & (k-1)(n-1)/2 & nv_p(n)+n-1& \p^n\\
\hline
A_{p, n, k}^m & nm &m  & 1 & m\lceil k/n\rceil  & m(k-1)(n-1)/2 & m(nv_p(n)+n-1) &\p^n\stackrel{(m}\cdots\p^n\\
\hline
B_{p, k} & 6 & 2 & 1 &2\lceil k/3\rceil&  2(k-1) & 4 & \p^3\p^3 \\
\hline
C_{p,k} & 36 & 6 & 3& 6k-90   &12k+78 & 24& \p^3_{2}\p^3_{2}\p^3_{2}\p^3_{2}\p^3_{2}\p^3_{2}\\
\hline
D_{\ell, p, n, k} & n(\ell -1) & g:={(\ell-1})/{\operatorname{ord}_{\ff{\ell}^\ast} (p)} & 1 & g\lceil k/n \rceil  & (n-1)(\ell-1)(k-1)/2 & (\ell-1)(nv_p(n)+n-1) & \p_{\frac{\ell-1}g}^n\stackrel{(g}\dots\p_{\frac{\ell-1}g}^n
 \\
\hline
E_{p,3}& 12 & 1 & 3 & 3 & 52  & 11 & \p^{12}\\
\hline
E_{p,4}& 36 & 1 & 4 & 4 & 553  & 35& \p^{36}\\
\hline
E_{p,5}& 72 & 1 & 5 & 5 & 2300  & 71 & \p^{72}\\
\hline
E_{p,6}& 144 & 1 & 6 & 6 &9378  & 143 &\p^{144}\\
\hline
E_{p,7}& 432 & 1 & 7 & 7  & 85476  & 431& \p^{432}\\
\hline
E_{p,8}& 864 & 1 & 8 & 8 & 342981  &  863 & \p^{864}\\
\hline
\end{array}
$$
\end{sidewaystable}

%%%%%%%%%%%%%%%%%%%%%%%%%%%%%%%%%%%%%%%%%%%%%%%%%%%%%%%%%%%%%%%%%%%%%%%%%%%%%%%%%%%%%%%%%%%%%%%%%%%%%
%%%%%%%%%%%%%%%%%%%%%%%%%%%%%%%%%%%%%%%%%%%%%%%%%%%%%%%%%%%%%%%%%%%%%%%%%%%%%%%%%%%%%%%%%%%%%%%%%%%%%
%\vskip 1cm
\subsection*{Family 1: $p$-adically irreducible polynomials of depth 1 and large index}\mbox{\null}

Let $p$ be a prime number. Take two coprime integers $n,k\in\N$, and $r\in\{0,1,\dots,\lfloor k/n\rfloor\}$. Define:
\vskip 2mm
\begin{center}
\fbox{$\displaystyle
A_{p,n,k,r}(x)=(x+1+p+p^2+\dots+p^r)^n+p^{k}
$
}\vskip 2mm
\end{center}
Our test polynomial is obtained from $A_{p,n,k}:=x^n+p^k$ by a linear change of the variable: $x\mapsto x+1+p+\dots+p^r$. Hence, these two polynomials have the same discriminant:
$$
\disc(A_{p,n,k,r})=\disc(A_{p,n,k})=(-1)^{n(n-1)/2}n^{n}p^{(n-1)k}.
$$

\noindent{\bf Proposition A1. }\it
Let $K_{p, n, k,r}$ be the number field defined by a root of $A_{p, n, k,r}(x)$.
\begin{itemize}
\item[a)] $\ind_p(A_{p, n, k,r})=(k-1)(n-1)/2$.
\item[b)] $v_p(\disc(K_{p,n, k,r} ))=nv_p(n)+n-1$.
\item[c)] $p\Z_{K_{p, n, k,r}}=\p^n,$ where  $\p$ is a prime ideal of residual degree~1.
\item[d)] The $p$-adically irreducible polynomial $A_{p, n, k,r}(x)$ has depth~1 and width  $(\lceil k/n \rceil)$.
\end{itemize}\rm

\begin{proof}
Take $\phi(x)=x+1+p+\dots+p^r$. The Newton polygon of first order $N_{\phi,v_p}(A_{p, n, k,r})$ is one-sided, with end points $(0,k)$, $(n,0)$, and slope $-k/n$. Thus, the prime $p$ is totally ramified in $K_{p,n,k,r}$. Proposition \ref{indextheorem} gives immediately the value of the index of $A_{p,n,k,r}$:
$$
\ind_p(A_{p,n,k,r})=(k-1)(n-1)/2.
$$
Hence, $v_p(\disc(K_{p,n,k,r}))=v_p(\disc(A_{p,n,k,r}))-2\ind_p(A_{p,n,k,r})=nv_p(n)+n-1$.
\end{proof}

For $k\le n$, these polynomials may have large degree and index, but they have small width (equal to $1$). For $k\gg n$ they have large width too.
In the latter case, the parameter $r$ may have an influence on the speed of an algorithm to save the obstruction of the high width. For instance, Montes algorithm performs $r+1$ iterations of its main loop before reaching the polynomial $\phi$ considered in the proof of Proposition A1, as an optimal lift to $\Z[x]$ of the irreducible factor $x+1$ of $A_{p,n,k,r}(x)$ modulo $p$.

%%%%%%%%%%%%%%%%%%%%%%%%%%%%%%%%%%%%%%%%%%%%%%%%%%%%%%%%%%%%%%%%%%%%%%%%%%%%%%%%%%%%%%%%%%%%%%%%%%%%%
%%%%%%%%%%%%%%%%%%%%%%%%%%%%%%%%%%%%%%%%%%%%%%%%%%%%%%%%%%%%%%%%%%%%%%%%%%%%%%%%%%%%%%%%%%%%%%%%%%%%%
%\vskip 1cm
\subsection*{Family 2: Arbitrary  number of  depth~1 $p$-adic factors and large index}\mbox{\null}

%%%%%%%%%%%%%%%%%%%%%%%%%%%%%%%%%%%%%%%%%%%%%%%%%%%%%%%%%%%%%%%%%%%%%%%%%%%%%%%%%%%%%%%%%%%%%%%%%%%%%
%%%%%%%%%%%%%%%%%%%%%%%%%%%%%%%%%%%%%%%%%%%%%%%%%%%%%%%%%%%%%%%%%%%%%%%%%%%%%%%%%%%%%%%%%%%%%%%%%%%%%

Let  $p>3$ be a prime number. Take $n,k$ coprime positive integers such that $k>nv_p(n)$, and  $m$ any integer such that $1<m<p/2$.
 Define:

\vskip 2mm
\fbox{ $\displaystyle A_{p, n, k}^m(x)= (x^n+2p^{k})((x+2)^n+2p^{k})\dots ((x+2m-2)^n+2p^{k})+2p^{mnk} $
}
\vskip 2mm
This polynomial is irreducible over $\Q$, since it is $2$-Eisenstein.\medskip

\noindent{\bf Lemma A2. }\it The $p$-valuation of the discriminant of $A_{p,n,k}^m(x)$ is:\rm
$$
v_p(\disc(A_{p,n,k}^m ))=  m(nv_p(n)+k(n-1)).
$$

\begin{proof}
The discriminant of $A(x):=x^n+2p^k$ is $(-1)^{n(n-1)/2}n^{n}2^{n-1}p^{(n-1)k}$. Take $F(x)=A(x)A(x+2)\dots A(x+2m-2)$; since all these factors of $F(x)$ are coprime modulo $p$:
$$
v_p(\disc (F))=m v_p(\disc(A))=m(nv_p(n)+k(n-1)).
$$
From $A_{p,n,k}^m=F+2p^{mnk}$, we get $v_p(\disc (A^m_{p,n,k}))=v_p(\disc(F))$, because $mnk>v_p(\disc(F))$, by our assumption on $k$.
\end{proof}

\noindent{\bf Proposition A3. }\it Let $K_{p,n,k}^m$ be the number field defined by a root of $A_{p, n, k}^m(x)$
\begin{itemize}
\item[a)]
$\ind_p(A_{p,n,k}^m)=m(k-1)(n-1)/2$.
\item[b)] $v_p(\disc(K_{p,n,k}^m))=m(nv_p(n)+n-1)$.
\item[c)] $\displaystyle p\Z_{K_{p,n,k}^m}=\p_1^n\cdots\p_m^n$, all prime ideals with residual degree $1$.
\item[d)] The $m$ $p$-adic factors of $A_{p, n, k}^m(x)$ have depth~1 and width  $(\lceil k/n \rceil)$.
\end{itemize}\rm

\begin{proof}Let $A(x)=x^n+2p^k$, and $\phi(x)=x$. Clearly $A_{p, n, k}^m(x)=a(x)\phi(x)^n+b(x)$, where $a(x)=A(x+2)\cdots A(x+2m-2)$ and $b(x)=2p^ka(x)+2p^{mnk}$. Since $a(x)$ is not divisible by $x$ modulo $p$, this $\phi$-development of $A_{p, n, k}^m$ is \emph{admissible} \cite[Def.1.11]{HN}, and it can be used to compute the principal Newton polygon of the first order $N^-_{\phi,v_p}(A_{p, n, k}^m)$ \cite[Lem.1.12]{HN}. Since $v_p(a(x))=0$ and $v_p(b(x))=k$, this polygon is one-sided of slope $-k/n$. Hence, $A_{p, n, k}^m(x)$ has a $p$-adic irreducible factor of degree $n$, depth $1$, index $(k-1)(n-1)/2$ and width $(\lceil k/n\rceil)$, which is congruent to a power of $x$ modulo $p$, and determines a totally ramified extension of $\Q_p$. The same argument, applied to $\phi_j(x)=x+2j$, for $1\le j<m$, determines all other irreducible factors of $A_{p, n, k}^m(x)$. Since these factors are pairwise coprime modulo $p$, the index of $A_{p, n, k}^m(x)$ is $m$ times the index of each local factor.
This proves all statements of the proposition.
\end{proof}

%%%%%%%%%%%%%%%%%%%%%%%%%%%%%%%%%%%%%%%%%%%%%%%%%%%%%%%%%%%%%%%%%%%%%%%%%%%%%%%%%%%%%%%%%%%%%%%%%%%%%
%%%%%%%%%%%%%%%%%%%%%%%%%%%%%%%%%%%%%%%%%%%%%%%%%%%%%%%%%%%%%%%%%%%%%%%%%%%%%%%%%%%%%%%%%%%%%%%%%%%%%

%\vskip 1cm
\subsection*{Family 3: Low degree, two $p$-adic factors of depth~$1$, and large width and index}\mbox{\null}

For $p\equiv1\pmod3$  a prime number and $k\in \N$, $k\not\equiv0\pmod{3}$, define the  polynomial
\vskip 2mm
\begin{center}
\fbox{$\displaystyle
B_{p,k}(x)=(x^2-2x+4)^3+p^{k}
$
}\vskip 2mm
\end{center}
\vskip 2mm
This polynomial is irreducible over $\Q$. In fact, it has two irreducible cubic factors over $\Z_p$ (by the proof of the proposition below) and it it is the cube of a quadratic irreducible factor modulo 3. The discriminant of $B_{p,k}(x)$ is
$$
\disc(B_{p,k})=-2^6 3^6 p^{4 k} \left( p^k+27\right).
$$

\noindent{\bf Proposition A4. }\it
Let $K_{p,k}$ be the number field defined by a root of the polynomial $B_{p,k}(x)$.
\begin{itemize}
\item[a)] $\ind_p(B_{p,k})=2(k-1)$.
\item[b)] $v_p(\disc(K_{p,k} ))=4$.
\item[c)] $p\Z_{K_{p,k}}=\p^3\p'^3,$ where  $\p, \p'$ are prime ideals of residual degree~1.
\item[d)] The two $p$-adic factors of $B_{p, k}(x)$ have depth~1 and width  $(\lceil k/3 \rceil)$.
\end{itemize}\rm

\begin{proof}
Let $x^2-2x+4=\phi_1(x)\phi_2(x)$ be the factorization of $x^2-2x+4$ in $\Z_p[x]$, into the product of two monic linear factors. Since these factors are coprime modulo $p$, the expression $B_{p,k}(x)=(\phi_1(x))^3(\phi_2(x))^3+p^k$ is simultaneously an admissible $\phi_i$-expansion of $B_{p,k}$, for $i=1,2$ \cite[Def.1.11]{HN}, and we can use this development to compute the Newton polygons of the first order $N^-_{\phi_i,v_p}(B_{p,k})$, for $i=1,2$ \cite[Lem.1.12]{HN}. Both polygons are one-sided of slope $-k/3$ and end points $(0,k)$, $(3,0)$. This proves c) and d).

On the other hand, Proposition \ref{indextheorem} shows that $\ind_p(\phi_1)=\ind_p(\phi_2)=k-1$. Since $\phi_1$ and $\phi_2$ are coprime modulo $p$, this proves a) and b).
\end{proof}

%%%%%%%%%%%%%%%%%%%%%%%%%%%%%%%%%%%%%%%%%%%%%%%%%%%%%%%%%%%%%%%%%%%%%%%%%%%%%%%%%%%%%%%%%%%%%%%%%%%%%
%%%%%%%%%%%%%%%%%%%%%%%%%%%%%%%%%%%%%%%%%%%%%%%%%%%%%%%%%%%%%%%%%%%%%%%%%%%%%%%%%%%%%%%%%%%%%%%%%%%%%
%\vskip 1cm
\subsection*{Family 4: Six $p$-adic factors of depth~3,  fixed medium degree, and large index}\mbox{\null}

%%%%%%%%%%%%%%%%%%%%%%%%%%%%%%%%%%%%%%%%%%%%%%%%%%%%%%%%%%%%%%%%%%%%%%%%%%%%%%%%%%%%%%%%%%%%%%%%%%%%%
%%%%%%%%%%%%%%%%%%%%%%%%%%%%%%%%%%%%%%%%%%%%%%%%%%%%%%%%%%%%%%%%%%%%%%%%%%%%%%%%%%%%%%%%%%%%%%%%%%%%%

Let $p\equiv 5\pmod{12}$ be a prime number. Take an integer $k>18$ and define:
\vskip 2mm
\begin{center}
\fbox{$\displaystyle
C_{p,k}(x):=\left((x^6+4 p\, x^3+3 p^2 x^2+4 p^2)^2 +p^6\right)^3+p^{k}.
$
}\vskip 2mm
\end{center}
%\vskip 2mm

\noindent{\bf Proposition A5. }\it
Suppose that $C_{p,k}$ is irreducible over $\Q$, and let $K_{p,k}$ be the number field generated by one of its roots.
\begin{itemize}
\item[a)] $\ind_p(C_{p,k})=12k+78;$
\item[b)] $v_p(\disc(K_p))=24;$
\item[c)] $p\Z_{K_p}=\p^3_{1}\cdots  \p^3_{6},$
all prime ideals $\p_j$ with residual degree $2$.
\item[d)] The six $p$-adic factors of $C_p(x)$ have depth~3 and width  $(1,1,k-17)$.
\end{itemize}\rm

\begin{proof}The proof consists of an application of Montes algorithm by hand. We leave the details to the reader. The algorithm outputs six $C_{p,k}$-complete strongly optimal types of order $3$. Three of them have the following fundamental invariants $(\phi_i,\lambda_i,\psi_i)$ at each level $i$:
$$
(y;(x,-1/3,y+2);(\phi_2,-1,y^2+3);(\phi_3+ip^3,17-k,y-\omega)),
$$
where $\phi_2(x)=x^3+2p$, $\phi_3(x)=x^6+4px^3+3p^2x^2+4p^2$, $i\in\Z$ satisfies $i^2\equiv-1\md{p^{k-17}}$ and $\omega\in\ff{p^2}^*$ runs on the three cubic roots of $-i(-2)^{6-k}\in\ff{p}^*$. The other three complete types are obtained by replacing $i$ by $-i$.

The Theorem of the index \cite[Thm.4.18]{HN} shows that $\ind_p(C_{p,k})=12k+78$. The computation of $v_p(\disc(K_{p,k}))$ is trivial, since $p$ is tamely ramified.
\end{proof}

%%%%%%%%%%%%%%%%%%%%%%%%%%%%%%%%%%%%%%%%%%%%%%%%%%%%%%%%%%%%%%%%%%%%%%%%%%%%%%%%%%%%%%%%%%%%%%%%%%%%%
%%%%%%%%%%%%%%%%%%%%%%%%%%%%%%%%%%%%%%%%%%%%%%%%%%%%%%%%%%%%%%%%%%%%%%%%%%%%%%%%%%%%%%%%%%%%%%%%%%%%%
%\vskip 1cm
\subsection*{Family 5: Large degree, multiple $p$-adic factors of depth~1 and large index and width}\mbox{\null}

%%%%%%%%%%%%%%%%%%%%%%%%%%%%%%%%%%%%%%%%%%%%%%%%%%%%%%%%%%%%%%%%%%%%%%%%%%%%%%%%%%%%%%%%%%%%%%%%%%%%%
%%%%%%%%%%%%%%%%%%%%%%%%%%%%%%%%%%%%%%%%%%%%%%%%%%%%%%%%%%%%%%%%%%%%%%%%%%%%%%%%%%%%%%%%%%%%%%%%%%%%%

Let  $\ell, p$ be two different prime numbers and $n, k \in \N$ two coprime integers. Consider the polynomial:
\vskip 2mm
\begin{center}
\fbox{$\displaystyle
D_{\ell,p,n,k}(x):=\Phi_{\ell}(x)^n+p^k,
$
}\vskip 2mm
\end{center}
where $\Phi_{\ell}(x)=1+x+\dots+x^{\ell-1}$ is the $\ell$-th cyclotomic polynomial.
\medskip

\noindent{\bf Lemma A6. }\it
The $p$-valuation of the discriminant of $D_{\ell, p, n, k}$ is:\rm
$$
v_p(\disc(D_{\ell, p, n, k}))=(\ell-1)(nv_p(n)+k(n-1)).
$$

\begin{proof}
Let $\alpha_1,\dots,\alpha_{\ell-1}$ be the roots of $\Phi_\ell(x)$, and $\beta_1,\dots,\beta_{\ell-2}$ the roots of
$\Phi'_\ell(x)$. Write $d=\deg D_{\ell,p,n,k}=n(l-1)$.
$$
\begin{array}{rl}
\disc(D_{\ell,p,n,k})&\displaystyle  =(-1)^{d(d-1)/2}\operatorname{Res}(\Phi_{\ell}(x)^n+p^k, n\Phi_{\ell}(x)^{n-1}\Phi'_{\ell}(x))\\
&\displaystyle =(-1)^{d(d-1)/2}n^d(l-1)^d \prod\nolimits_{\alpha_i}(\Phi_\ell(\alpha_i)^n+p^k)^{n-1}\prod\nolimits_{\beta_i}(\Phi_\ell(\beta_i)^n+p^k)\\
&\displaystyle = (-1)^{d(d-1)/2}n^dp^{k(\ell-1)(n-1)}(l-1)^d\prod\nolimits_{\beta_i}(\Phi_\ell(\beta_i)^n+p^k).\\
\end{array}
$$
The term $(l-1)^d\prod_{\beta_i}(\Phi_\ell(\beta_i)^n+p^k)$ is congruent, up to a sign,  to $\disc(\Phi_\ell)^n$ modulo $p$; thus, it is not divisible by $p$ and the conclusion of the lemma follows.
\end{proof}

\noindent{\bf Proposition A7. }\it Assume that the polynomial $D_{\ell, p, n, k}(x)$ is irreducible over $\Q$ and
let $K_{\ell, p, n, k}$ be the number field generated by one of its roots.
Denote by $f$ the order of $p$ in the multiplicative group $\ff{\ell}^\ast$, and set $g=(\ell-1)/f$.
\begin{itemize}
\item[a)] $v_p(\ind(D_{\ell, p, n, k}))=(\ell-1)(n-1) (k-1)/2$.
\item[b)] $v_p(\disc(K_{\ell, p, n, k}))=(\ell-1)(nv_p(n)+n-1)$.
\item[c)] $p\Z_{K_{\ell, p, n, k}}= \p_{1}^n\cdots\p_{g}^n$, all prime idals $\p_j$
 with residual degree $f$.
\item[d)] The $g$ $p$-adic factors of $D_{\ell, p, n, k}(x)$ have depth~1 and width  $(\lceil k/n\rceil)$.
\end{itemize}\rm

\begin{proof}
The cyclotomic polynomial $\Phi_\ell$ splits in $\Z_p[x]$ into the product $\Phi_\ell=\phi_1\cdots \phi_g$, of $g$ irreducible factors of degree $f$. Since these factors are coprime modulo $p$, the expression $D_{\ell,p,n,k}=(\phi_1)^n\cdots(\phi_g)^n+p^k$ is simultaneously an admissible $\phi_i$-expansion of $D_{\ell,p,n,k}$, for all $1\le i\le g$ \cite[Def.1.11]{HN}, and we can use this development to compute the $g$ Newton polygons of the first order $N^-_{\phi_i,v_p}(D_{\ell,p,n,k})$ \cite[Lem.1.12]{HN}. All these polygons are one-sided of slope $-k/n$ and end points $(0,k)$, $(n,0)$. This proves c) and d).

On the other hand, Proposition \ref{indextheorem} shows that $\ind_p(\phi_i)=f(n-1)(k-1)/2$, for all $i$. Since $\phi_1,\dots,\phi_g$ are coprime modulo $p$, we have $\ind_p(D_{\ell,p,n,k})=g\ind_p(\phi_1)=gf(n-1)(k-1)/2$. This proves a) and b).
\end{proof}

With a proper election of the primes $\ell, p$ we can achieve arbitrarily large values of $f$ and $g$, with the only restriction $fg=\ell-1$.

%%%%%%%%%%%%%%%%%%%%%%%%%%%%%%%%%%%%%%%%%%%%%%%%%%%%%%%%%%%%%%%%%%%%%%%%%%%%%%%%%%%%%%%%%%%%%%%%%%%%%
%%%%%%%%%%%%%%%%%%%%%%%%%%%%%%%%%%%%%%%%%%%%%%%%%%%%%%%%%%%%%%%%%%%%%%%%%%%%%%%%%%%%%%%%%%%%%%%%%%%%%
%\vskip 1cm
\subsection*{Family 6: $p$-adically irreducible polynomials of fixed large degree and depth}\mbox{\null}

%%%%%%%%%%%%%%%%%%%%%%%%%%%%%%%%%%%%%%%%%%%%%%%%%%%%%%%%%%%%%%%%%%%%%%%%%%%%%%%%%%%%%%%%%%%%%%%%%%%%%
%%%%%%%%%%%%%%%%%%%%%%%%%%%%%%%%%%%%%%%%%%%%%%%%%%%%%%%%%%%%%%%%%%%%%%%%%%%%%%%%%%%%%%%%%%%%%%%%%%%%%

For any prime number $p>3$, consider the following polynomials:
%\vskip 2mm
\begin{center}
$$
\begin{array}{|l|}
\hline\displaystyle E_{p,1}(x)=x^2+p\\ [1ex]
\hline\displaystyle E_{p,2}(x)=E_{p,1}(x)^{2}+(p-1)p^{3}x\\[1ex]
\hline\displaystyle E_{p,3}(x)=E_{p,2}(x)^{3}+p^{11}\\[1ex]
\hline\displaystyle E_{p,4}(x)=E_{p,3}(x)^{3}+p^{29}xE_{p,2}(x)\\[1ex]
\hline\displaystyle E_{p,5}(x)=E_{p,4}(x)^{2}+(p-1)p^{42}xE_{p,1}(x)E_{p,3}(x)^2 \\ [1ex]

\hline\displaystyle E_{p,6}(x)=E_{p,5}(x)^{2}+p^{88}xE_{p,3}(x)E_{p,4}(x)\\[1ex]
\hline\displaystyle E_{p,7}(x)=E_{p,6}(x)^{3}+p^{295}E_{p,2}(x)E_{p,4}(x)E_{p,5}(x)\\[1ex]
\hline\displaystyle E_{p,8}(x)=E_{p,7}(x)^{2}+(p-1)p^{632}xE_{p,1}(x)E_{p,2}(x)^2E_{p,3}(x)^2E_{p,6}(x)\\[1ex]
\hline\end{array}
$$
\vskip 2mm
\end{center}

These polynomials have been built recursively through a constructive application of Montes algorithm. They are all irreducible over $\Z_p$ and determine totally ramified extensions of $\Q_p$. The depth of $E_{p,i}$ is $i$, and an Okutsu frame is given by $[\phi_1=x,\phi_2=E_{p,1},\dots,\phi_i=E_{p,i-1}]$. The Newton polygons $N_i(E_{p,j})$, for $j\ge i$, are one-sided of slope $\lambda_i$, where:
$$
\lambda_1=-\dfrac{1}2,\quad \lambda_2=-\dfrac{3}2,\quad \lambda_3=\lambda_4=-\dfrac{2}3,\quad
\lambda_5=\lambda_6=-\dfrac{1}2,\quad \lambda_7=-\dfrac{1}3,\quad \lambda_8=-\dfrac{1}2.
$$
The values of $\ind_p(E_{p,i})$ are given in Table \ref{Tabsideways}; they have been derived from Proposition \ref{indextheorem}.

\subsection*{Families of test equations for function fields} Let $\mathcal{F}$ be a perfect field, and $p$ an indeterminate. One checks easily that all polynomials of Table \ref{Tab1} are irreducible over $\mathcal{F}[p]$; hence, they may be used to test arithmetically oriented algorithms for function fields.

\end{document}